\def\defterm{\textbf}
\newcommand{\Mat}{\operatorname{M}}
\newcommand{\charac}{\operatorname{char}}
\newcommand{\id}{\operatorname{id}}
\newcommand{\GL}{\operatorname{GL}}
\newcommand{\Ker}{\operatorname{Ker}}
\newcommand{\im}{\operatorname{Im}}
\newcommand{\tr}{\operatorname{tr}}
\newcommand{\rk}{\operatorname{rk}}
\renewcommand{\setminus}{\smallsetminus}
\def\F{\mathbb{F}}
\def\K{\mathbb{K}}
\def\N{\mathbb{N}}
\def\lcro{\mathopen{[\![}}
\def\rcro{\mathclose{]\!]}}
\theoremstyle{definition}
\newtheorem{Def}{Definition}[section]
\theoremstyle{plain}
\newtheorem{theo}{Theorem}[section]
\newtheorem{prop}[theo]{Proposition}
\newtheorem{cor}[theo]{Corollary}
\newtheorem{lemma}[theo]{Lemma}
\theoremstyle{plain}
\theoremstyle{remark}
\newtheorem{Rems}{Remarks}
\newtheorem{Rem}[Rems]{Remark}
\title{A note on sums of three square-zero matrices}
\author{Cl\'ement de Seguins Pazzis\footnote{Universit\'e de Versailles Saint-Quentin-en-Yvelines, Laboratoire de Math\'ematiques
de Versailles, 45 avenue des Etats-Unis, 78035 Versailles cedex, France}
\footnote{e-mail address: dsp.prof@gmail.com}}
\begin{document}

\thispagestyle{plain}

\maketitle

\begin{abstract}
It is known that every complex trace-zero matrix is the sum of four square-zero matrices,
but not necessarily of three such matrices. In this note, we prove that for every trace-zero matrix
$A$ over an arbitrary field, there is a non-negative integer $p$ such that the extended matrix $A \oplus 0_p$ is the sum of three square-zero matrices
(more precisely, one can simply take $p$ as the number of rows of $A$).
Moreover, we demonstrate that if the underlying field has characteristic $2$ then
every trace-zero matrix is the sum of three square-zero matrices.
We also discuss a counterpart of the latter result for sums of idempotents.
\end{abstract}

\vskip 2mm
\noindent
\emph{AMS Classification:} 15A24; 15B33.

\vskip 2mm
\noindent
\emph{Keywords:} Decomposition, Rational canonical form, Square-zero matrices, Fields with characteristic $2$, Idempotent matrices.

% Relecture 3/4 achevée.

\section{Introduction}

\subsection{The problem}

Let $\F$ be a field, and denote by $\Mat_n(\F)$ the algebra of all $n$ by $n$ square matrices with entries in $\F$.
A matrix $A \in \Mat_n(\F)$ is called \textbf{square-zero} if $A^2=0$.
Obviously, every square-zero matrix has trace zero, and hence the sum of finitely many square-zero matrices
has trace zero. Conversely, by using the Jordan canonical form it is easy to split
every nilpotent matrix into the sum of two square-zero matrices.
On the other hand, it is well known that every trace-zero matrix is a sum of nilpotent matrices:
More precisely, it is the sum of two such matrices if non-scalar, otherwise
it is the sum of three nilpotent matrices (see e.g.\ \cite{Breaz}). Hence, every trace-zero matrix is the sum of at most six square-zero matrices,
and of at most four such matrices if non-scalar. In particular, over a field
of characteristic zero, every trace-zero matrix is the sum of four square-zero matrices.
The result still holds over general fields, see Section \ref{sumoftwosection} of the present article for a proof.

On the other hand, the classification of matrices that are the sum of two square-zero matrices is known
\cite{Bothasquarezero}. Thus, it only remains to understand which matrices split into the sum
of three square-zero ones. Since the set of all square-zero $n$ by $n$ matrices is stable under similarity,
in theory one should be able to detect such matrices from their rational canonical form.
Unfortunately, in practice such a classification seems out of reach. For example, it has been shown
by Wang and Wu \cite{WangWu} that if $A \in \Mat_n(\F)$ is the sum of three square-zero matrices and $\F$ is the field of complex numbers, then
$\rk (A-\lambda I_n) \geq \frac{n}{4}$ for every non-zero scalar $\lambda$ (the result can be generalized to an arbitrary field with characteristic not $2$); Yet, it has been demonstrated in \cite{Takahashi} that there are trace-zero matrices that satisfy this condition without being
the sum of three square-zero matrices.

In this article, we shall frame the problem in a slightly different manner.
If we have a trace-zero matrix $A$, maybe $A$ is not the sum of three square-zero matrices, but can
we obtain such a decomposition by enlarging $A$? More precisely, can we find a positive integer $p$ such that
the block-diagonal matrix $A \oplus 0_p$, that is $\begin{bmatrix}
A & 0_{n \times p} \\
0_{p \times n} & 0_p
\end{bmatrix}$,
is the sum of three square-zero matrices? The motivation for studying that problem
stems from the infinite-dimensional setting. In a future article, its solution will help us characterize the endomorphisms
of an infinite-dimensional vector space that can be decomposed as the sum of three square-zero endomorphisms.

The main strategy consists in an adaptation of the methods that were used in \cite{dSPidempotentLC}
to prove that every matrix is a linear combination of three idempotent ones. In the last section of the article,
we shall prove a variation of this very result, motivated again by the case of infinite-dimensional spaces.

\subsection{Main results}

Here are our three main results.

\begin{theo}\label{sumoffour}
Let $\F$ be an arbitrary field, and $A \in \Mat_n(\F)$ be a square matrix with trace zero.
Then, $A$ is the sum of four square-zero matrices.
\end{theo}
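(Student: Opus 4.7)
The plan is to split on whether $A$ is scalar. The non-scalar case follows by chaining two classical reductions: every non-scalar trace-zero matrix is a sum of two nilpotents (see \cite{Breaz}), and every nilpotent matrix is a sum of two square-zero matrices --- obtained from the Jordan form by grouping the superdiagonal entries of each nilpotent Jordan block into its odd- and even-indexed parts, each group being a sum of rank-one matrices $E_{i,i+1}$ in non-interacting positions and therefore itself square-zero. Composing these two reductions yields a decomposition of any non-scalar trace-zero matrix into $2 \cdot 2 = 4$ square-zero summands.

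For the scalar case $A = \lambda I_n$, the hypothesis $\tr(A) = 0$ reads $n\lambda = 0$ in $\F$; if $\lambda = 0$ the statement is trivial, so assume $p := \charac(\F)$ divides $n$ and write $n = pk$. Since a direct sum of square-zero matrices is square-zero, the identity $\lambda I_n = (\lambda I_p)^{\oplus k}$ reduces the task to the core problem of decomposing $\lambda I_p \in \Mat_p(\F)$ into four square-zero matrices.

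For this core problem with $p$ odd, I would write $\lambda I_p = D + (\lambda I_p - D)$, where $D := \lambda \cdot \operatorname{diag}(0, 1, 2, \ldots, p-1)$. Both $D$ and $\lambda I_p - D$ are diagonal, both have zero trace (since $\sum_{k=0}^{p-1} k = p(p-1)/2 \equiv 0 \pmod p$ for odd $p$), and --- crucially --- their eigenvalue multisets are invariant under negation, because $-k \equiv p - k \pmod p$. This $\pm$-symmetry is exactly what allows a diagonalizable matrix over a field of characteristic $\neq 2$ to be written as a sum of two square-zero matrices: pair each nonzero eigenvalue $\mu$ with $-\mu$ to form a $2 \times 2$ block $\operatorname{diag}(\mu, -\mu)$, which admits a standard one-parameter family of sum-of-two-square-zero decompositions, and handle the single zero eigenvalue as a trivial $1 \times 1$ block. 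Block-diagonal assembly of these pair decompositions expresses $D$ and $\lambda I_p - D$ each as a sum of two square-zero matrices, giving a sum-of-four presentation of $\lambda I_p$.

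The main obstacle will be characteristic $2$, where the $\pm$-pairing collapses (since $-\mu = \mu$) and the halving used in the above parameterization is unavailable. I would dispatch this case independently, either by invoking the stronger characteristic-$2$ result promised later in the paper (three square-zero matrices suffice for any trace-zero matrix), or by a short direct decomposition of $\lambda I_2$ into three square-zero matrices extended block-diagonally to $\lambda I_{2k}$.
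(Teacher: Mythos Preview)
Your proof is correct and follows the same overall architecture as the paper: split into non-scalar (two nilpotents, each a sum of two square-zero) and scalar (reduce to $\lambda I_p$ and decompose it into two pieces, each similar to its own negative). The specific constructions differ slightly in presentation but are more closely related than they first appear.

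In the non-scalar case, the paper does not cite \cite{Breaz} but instead invokes Fillmore's theorem directly to obtain a zero-diagonal representative, then splits it as (strictly upper-triangular) $+$ (strictly lower-triangular); this is of course exactly the standard proof of the two-nilpotents result you cite.

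In the scalar case with $p$ odd, the paper normalizes to $\lambda=1$, takes $r(t)=t^p-t$, and writes $I_p=\bigl(I_p+C(r(t))\bigr)+\bigl(-C(r(t))\bigr)$, noting that both $r(t)$ and $r(t-1)$ are odd polynomials so each summand is a sum of two square-zero matrices by Theorem~\ref{sumoftwotheo}. Your diagonal splitting $\lambda I_p = D + (\lambda I_p - D)$ with $D=\lambda\,\operatorname{diag}(0,1,\dots,p-1)$ is in fact the \emph{same} decomposition up to similarity: since $t^p-t=\prod_{k\in\F_p}(t-k)$ splits over $\F$, one has $C(t^p-t)\simeq\operatorname{diag}(0,1,\dots,p-1)$, and likewise $I_p+C(t^p-t)\simeq\operatorname{diag}(1,2,\dots,p)$. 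You then invoke the $\pm$-pairing criterion (Corollary~\ref{similaropposite}) where the paper invokes the odd-polynomial criterion; these are equivalent here. Your version has the merit of being self-contained without reference to the invariant-factor characterization.

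For $p=2$ the paper gives a uniform argument via $r(t)=t^2$, observing that $C(t^2)$ and $C\bigl((t-1)^2\bigr)=C(t^2+1)$ are both companion matrices of even polynomials; this is precisely the ``short direct decomposition'' you allude to (cf.\ Lemma~\ref{2by2car2}). Invoking Theorem~\ref{carac2squarezero} instead would not be circular, but is heavier than needed.
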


\begin{theo}\label{allfieldssquarezero}
Let $\F$ be an arbitrary field, and $A \in \Mat_n(\F)$ be a square matrix with trace zero.
Then, $A\oplus 0_n$ is the sum of three square-zero matrices of $\Mat_{2n}(\F)$.
\end{theo}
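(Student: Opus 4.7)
The plan is to adapt the methods of \cite{dSPidempotentLC}, which handled the analogous problem for linear combinations of three idempotents, to the present square-zero setting. First, since conjugating $A$ by an invertible $P \in \Mat_n(\F)$ amounts to conjugating $A \oplus 0_n$ by $P \oplus I_n$---and this preserves both the square-zero property of individual summands and the identity $N_1+N_2+N_3 = A \oplus 0_n$---we may replace $A$ by any similar matrix. In particular, we may assume that $A$ is in rational canonical form, i.e., a direct sum $\bigoplus_{i=1}^r C_{p_i}$ of companion matrices with $p_1 \mid \cdots \mid p_r$. The trace-zero hypothesis then becomes a linear relation on the degree $\deg p_i - 1$ coefficients of the $p_i$, namely $\sum_i \tr(C_{p_i}) = 0$. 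Crucially, individual companion blocks need not have trace zero, so the decomposition has to be built \emph{globally} rather than block by block.

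I then aim to construct three explicit square-zero matrices $N_1, N_2, N_3 \in \Mat_{2n}(\F)$ whose sum is $A \oplus 0_n$. The extra $n$-dimensional zero block is essential here. If one tried the naive ansatz in which $N_1$ has only an upper-right block, $N_2$ only a lower-left block, and $N_3$ absorbs the rest, the conditions $N_3^2 = 0$ and $\sum N_i = A \oplus 0_n$ would quickly force $A$ itself to be the sum of two square-zero matrices in $\Mat_n$, which by Botha's classification \cite{Bothasquarezero} fails for many trace-zero $A$. Hence each of the three summands must have nonzero entries in all four quadrants of the natural $2 \times 2$ block decomposition. The idea is to build the cyclic, shift-like structure of the companion matrices into those blocks, and then use the aggregate trace-zero condition to cancel the remainder terms that would otherwise obstruct $N_i^2 = 0$.

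The main obstacle is precisely the explicit construction of this decomposition and the verification of the three square-zero conditions. The computations will depend intricately on the coefficients of the invariant factors $p_i$ and on how the aggregate trace-zero relation is used to balance the contributions of blocks whose individual trace is nonzero. A further subtlety is to arrange for the construction to work uniformly across arbitrary fields, since the theorem makes no assumption on the characteristic; presumably the same block-matrix construction will also account for the characteristic-$2$ case mentioned in the introduction, where the extension by $0_n$ should in fact turn out to be removable.
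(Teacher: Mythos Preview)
Your proposal is a plan rather than a proof: the central step---the explicit construction of three square-zero summands $N_1,N_2,N_3$---is never carried out. You correctly observe that the naive block ansatz collapses to Botha's two-summand problem and that the summands must therefore mix all four quadrants, but ``build the cyclic, shift-like structure of the companion matrices into those blocks'' is a wish, not a construction. Without the actual matrices and the verification that each squares to zero, there is nothing to check, and the difficulty you yourself name (``the main obstacle is precisely the explicit construction'') remains untouched.

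More importantly, the strategy of constructing all three summands simultaneously is harder than necessary and is not what the paper does. The paper's key idea is to find \emph{one} square-zero matrix $S$ such that $M-S$ lies in the class of sums of \emph{two} square-zero matrices, and then invoke Botha's classification (Theorem~\ref{sumoftwotheo}). Concretely, in characteristic $\neq 2$ the paper first reduces $M=A\oplus 0_n$ via Lemma~\ref{WPlemma} to the shape $N\oplus \alpha I_{2q}\oplus 0_r$ with $N$ well-partitioned and $r\ge 2q$. The well-partitioned block is handled by Lemma~\ref{wellpartitioned-squarezero}, which subtracts a square-zero matrix to make $N$ cyclic with any prescribed characteristic polynomial of the correct trace; the $I_{2q}\oplus 0_{2q}$ block is handled via Lemma~\ref{proj+squarezero}. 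The trace-zero hypothesis is used only at the end, to arrange that the resulting matrix is similar to its negative and hence a sum of two square-zero matrices by Corollary~\ref{similaropposite}. The characteristic-$2$ case is treated by a separate and in fact stronger argument (Theorem~\ref{carac2squarezero}, where no padding by $0_n$ is needed); your expectation that a single block construction covers both characteristics uniformly is not borne out.
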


\begin{theo}\label{carac2squarezero}
Let $\F$ be a field with characteristic $2$. Every trace-zero matrix of $\Mat_n(\F)$
is the sum of three square-zero matrices.
\end{theo}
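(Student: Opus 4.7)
The plan is to revisit the strategy behind Theorem \ref{allfieldssquarezero} and use the characteristic-$2$ hypothesis at two key places to remove the $0_n$ padding: once to resolve scalar blocks explicitly, and once within the treatment of the non-scalar cyclic blocks.

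The first step is a reduction via the rational canonical form. Writing $A$ (up to similarity) as a direct sum of companion blocks of the invariant factors, the condition $\tr A = 0$ is equivalent to the vanishing of the sum of the subleading coefficients of those invariant factors. I would regroup the blocks so as to present $A$ as a direct sum of a single scalar block $sI_m$, which carries the accumulated trace, plus non-scalar cyclic blocks that individually have trace zero. Since $\tr(sI_m) = ms$ must equal zero, in characteristic $2$ either $s = 0$ or $m$ is even.

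The second step handles the scalar block. When $s = 0$ this is trivial; otherwise $sI_m$ is the block-diagonal sum of $m/2$ copies of $sI_2$, and each such copy admits the explicit decomposition
\[
sI_2 = \begin{bmatrix} s & s \\ s & s \end{bmatrix} + \begin{bmatrix} 0 & 0 \\ s & 0 \end{bmatrix} + \begin{bmatrix} 0 & s \\ 0 & 0 \end{bmatrix},
\]
in which each summand squares to zero — the first thanks to $s+s=0$ in characteristic $2$.

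The third step handles each non-scalar cyclic trace-zero block, namely the companion matrix of a polynomial $q(X) = X^d + a_{d-2} X^{d-2} + \cdots + a_0$ with vanishing subleading coefficient. Here I would construct three explicit square-zero matrices summing to $C(q)$, exploiting the tridiagonal-like structure of the companion matrix together with the vanishing of the subleading coefficient, the characteristic-$2$ hypothesis being what allows the construction to collapse from four summands (as in Theorem \ref{sumoffour}) to three.

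The main obstacle is the third step: producing, in characteristic $2$, an explicit three-square-zero decomposition of a trace-zero companion matrix without the auxiliary $0_n$ block used in Theorem \ref{allfieldssquarezero}. A secondary difficulty lies in the first step, since the regrouping that consolidates residual traces into a single scalar summand must be arranged without destroying the cyclic structure of the remaining blocks.
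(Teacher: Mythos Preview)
Your Step~1 has a genuine gap, and it is more serious than the ``secondary difficulty'' you flag: the regrouping you describe---writing $A$ as a single scalar block $sI_m$ plus cyclic blocks that are \emph{individually} trace-zero---is not generally possible. Over $\F_2$, take $A = C(t^3+t^2+1)\oplus C(t^3+t^2+1)$. The total trace vanishes, yet each companion block has trace $1$; since $t^3+t^2+1$ is irreducible there are no $1\times 1$ summands to serve as your scalar block, and $A$ is not cyclic so the two blocks cannot be merged into a single companion matrix. More generally, the primary decomposition gives one $1\times 1$ block per size-$1$ Jordan cell over $\F$, and nothing forces these to share a common value $s$, nor is there any mechanism to redistribute trace among the larger blocks.

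By contrast, your Step~3 is not really an obstacle once a trace-zero companion block is in hand: if $\tr q = 0$ then the bottom entry of the last column of $C(q)$ is zero, so the rank-one matrix $S$ supported on that column is square-zero and $C(q)-S = C(t^d)$ is nilpotent, hence a sum of two square-zero matrices. This works in every characteristic; characteristic~$2$ is not what saves you there. The paper avoids the Step~1 problem by \emph{not} insisting that each cyclic piece be trace-zero. It pairs off size-$1$ Jordan cells with the same eigenvalue into blocks $\alpha_i I_2$ (your Step~2 handles these), leaving a residual block $N$ that has trace zero globally but whose cyclic constituents need not. Then $N$ is treated as a whole: either its minimal polynomial is a power of a single irreducible $p$ (handled directly, with a pairing trick that merges two odd-power blocks into one even-power block when $\tr p\neq 0$), or $N$ is similar to a well-partitioned matrix, and Lemma~\ref{wellpartitioned-squarezero} produces a single square-zero $S$ with $N-S\simeq C(t^p)$.
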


With a similar method, we shall obtain the following theorem:

\begin{theo}\label{carac2idempotents}
Let $\F$ be an arbitrary field with characteristic $2$, and $A \in \Mat_n(\F)$ be a square matrix whose trace belongs to $\{0,1\}$.
Then, $A\oplus 0_n$ is the sum of three idempotent matrices of $\Mat_{2n}(\F)$.
\end{theo}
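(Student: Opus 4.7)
My plan is to follow the architecture of Theorem~\ref{allfieldssquarezero}, which decomposes $A \oplus 0_n$ via the rational canonical form and constructs an explicit decomposition, but to adapt it to idempotents using the special features of characteristic~$2$.

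\textbf{Step 1 (Reduction).} Using the rational canonical form, $A$ is similar to a direct sum of companion matrices $\bigoplus_i C(p_i)$ of its invariant factors, and $A \oplus 0_n$ is in turn similar to $\bigoplus_i \bigl(C(p_i) \oplus 0_{m_i}\bigr)$ (with $m_i = \deg p_i$), by regrouping the zero-padding block by block. Since the property ``sum of three idempotents'' is preserved under direct sum and similarity, it suffices to handle a single $C(p) \oplus 0_m$. The subtlety is that only the total trace is known to lie in $\{0,1\}$, while individual $\tr C(p_i)$ may lie outside. I would resolve this either by first replacing $A$ with a similar matrix whose cyclic factors each have trace in $\{0,1\}$ (using a Fillmore/Shoda-type rearrangement of diagonal entries), or by bundling cyclic factors into groups whose combined trace lies in $\{0,1\}$.

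\textbf{Step 2 (Construction).} For a cyclic $C = C(p) \in \Mat_m(\F)$ with $\tr C \in \{0, 1\}$, I would construct three idempotents $P_1, P_2, P_3 \in \Mat_{2m}(\F)$ summing to $C \oplus 0_m$. Characteristic~$2$ furnishes a rich supply of block-matrix idempotents: the templates $\begin{pmatrix} I & * \\ 0 & 0 \end{pmatrix}$, $\begin{pmatrix} 0 & 0 \\ * & I \end{pmatrix}$, $\begin{pmatrix} 0 & * \\ 0 & I \end{pmatrix}$, and $\begin{pmatrix} I & 0 \\ * & 0 \end{pmatrix}$ are idempotent for every~$*$. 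I would assemble three such blocks (or slight generalizations thereof) with ``$*$-entries'' encoding parts of $C$, using the cyclic action of $C$ on a standard basis to make the pieces click together, and exploiting the characteristic-$2$ identity $(N+I)^2 = N^2 + I$ where relevant.

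\textbf{Main obstacle.} The hardest part is Step~2: finding the explicit three idempotents. The trace parity constraint, $\sum_i \rk P_i \equiv \tr C \pmod 2$, is exactly what the hypothesis $\tr A \in \{0,1\}$ provides, but distributing the ranks correctly is delicate. Moreover, the four block templates above are quite restrictive: a naive sum of three of them tends to force $C$ itself to be idempotent, so a successful construction will require at least one $P_i$ of more general block form (with nontrivial interaction between the two halves of the doubled space), the verification $P_i^2 = P_i$ being carried out by exploiting the companion structure of $C(p)$ and, ultimately, the Cayley--Hamilton relation for $C$.
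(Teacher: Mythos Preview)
There is a structural gap between your two steps. In Step~1, the Fillmore idea cannot help: the traces of the invariant factors of $A$ are themselves similarity invariants, so no conjugation of $A$ will move them into $\{0,1\}$. The bundling alternative fares no better. Over $\F_4$, the matrix $A=\omega I_2$ has trace $0$, but both invariant factors $t-\omega$ have trace $\omega\notin\{0,1\}$; the only admissible bundle is all of $A$, which is not cyclic, so Step~2 (stated only for a single companion block $C(p)\oplus 0_m$) does not apply. In general, any forced bundle is a direct sum of several companion matrices, and your block-template construction loses its cyclic anchor. Step~2 itself is not carried out: you correctly note that three of the listed templates cannot suffice and that at least one $P_i$ must be off-template, but no such $P_i$ is produced, and there is no indication of how Cayley--Hamilton would make $P_i^2=P_i$ drop out.

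The paper's route avoids both difficulties by a different mechanism. Rather than doubling each cyclic factor and assembling three idempotents from scratch, it subtracts a \emph{single} idempotent $P$ so that $M-P$ lands in the known class of sums of two idempotents in characteristic~$2$ (those matrices whose Jordan cells for eigenvalues outside $\{0,1\}$ are all even-sized). The reduction is not by invariant factors but by first pairing off repeated size-$1$ Jordan cells into blocks $\alpha I_2$; each such block, padded by one extra zero, is shown by a direct $3\times 3$ computation to be a sum of three idempotents. What remains is a matrix $N$ with at most one size-$1$ cell per eigenvalue and $\tr N=\tr A\in\{0,1\}$; such an $N$ is either well-partitioned or has minimal polynomial a prime power, and in both cases Lemma~\ref{wellpartitioned-idempotent} (or an explicit companion-matrix argument) produces one idempotent whose subtraction reaches the two-idempotent class. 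The zero padding in $A\oplus 0_n$ is thus consumed only by the $\alpha I_2$ blocks, one extra zero each, not by doubling every cyclic summand.
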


It is known that over $\F_2$ every matrix is the sum of three idempotent ones (see Theorem 1 of \cite{dSPidempotentLC}).
This fails over larger fields with characteristic $2$ even if we restrict to trace-zero matrices:
Take indeed such a field $\F$ together with a scalar $\lambda \in \F \setminus \{0,1\}$.
Then, $\lambda\,I_{2n}$ has trace zero yet it is not the sum of three idempotent matrices (see the remark in the middle of page 861 of \cite{dSPidempotentsums}).

The motivation for the above results stems from the following corollaries, which will be proved in Section \ref{infinitedim}.

\begin{cor}\label{3squarezeroCor}
Let $V$ be an infinite-dimensional vector space over $\F$, and $u$ be a finite-rank endomorphism with trace zero.
Then, $u$ is the sum of three square-zero endomorphisms of $V$.
\end{cor}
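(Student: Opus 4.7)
The plan is to reduce to the finite-dimensional setting and invoke Theorem~\ref{allfieldssquarezero}. Since $u$ has finite rank, the image $I := \im u$ is finite-dimensional and the kernel $K := \ker u$ has finite codimension in $V$; in particular $K$ itself is infinite-dimensional.

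First I would build a decomposition of $V$ adapted to $u$. Pick a complement $S$ of $K$ in $V$ (so $\dim S = \rk u$) and set $V_1 := I + S$. Then $V_1$ is finite-dimensional, contains $I$, and is $u$-stable (because $u(I)\subseteq I$ and $u(S)\subseteq I \subseteq V_1$). Taking a complement $V_2$ of $V_1\cap K$ inside $K$ yields $V = V_1 \oplus V_2$ with $V_2 \subseteq K$: indeed $V_1+V_2 \supseteq S+K = V$, and $V_1 \cap V_2 \subseteq V_1 \cap K$ meets $V_2$ trivially by construction. In this decomposition, $u$ writes as $A \oplus 0_{V_2}$ where $A := u|_{V_1}$ satisfies $\tr A = \tr u = 0$.

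Next I exploit that $V$ is infinite-dimensional: since $V_1$ is finite-dimensional, $V_2$ must be infinite-dimensional, so it splits as $V_2 = W \oplus V_3$ with $\dim W = \dim V_1 =: n$. Relative to $V = V_1 \oplus W \oplus V_3$, the endomorphism $u$ reads $(A \oplus 0_W) \oplus 0_{V_3}$. Now Theorem~\ref{allfieldssquarezero}, applied to $A$, provides three square-zero endomorphisms $N_1,N_2,N_3$ of $V_1\oplus W$ whose sum equals $A\oplus 0_W$. Extending each $N_i$ by zero on $V_3$ produces endomorphisms $\widetilde N_i$ of $V$ that are still square-zero (since $(\widetilde N_i)^2 = N_i^2 \oplus 0 = 0$) and whose sum is $u$.

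The only genuine subtlety is in the first step, where one must choose $V_1$ both to be $u$-stable and to admit a complement that sits inside $K$; the choice $V_1 = I + S$ handles these two requirements simultaneously. Everything else is a routine transfer from matrices to endomorphisms and a zero-extension argument.
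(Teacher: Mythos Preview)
Your proof is correct and follows essentially the same approach as the paper's own argument: build a finite-dimensional $u$-stable subspace containing $\im u$ with a complement inside $\ker u$, enlarge it by a further $n$-dimensional piece of the kernel, apply Theorem~\ref{allfieldssquarezero} there, and extend by zero. The only difference is cosmetic: you construct $V_1 = \im u + S$ explicitly and verify the direct-sum decomposition, whereas the paper simply asserts the existence of such a subspace $W$ (with $\im u \subset W$ and $\Ker u + W = V$) and the splitting $V = W \oplus W' \oplus W_0$.
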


\begin{cor}\label{3squarezeroCorcar2}
Assume that $\F$ has characteristic $2$.
Let $V$ be an infinite-dimensional vector space over $\F$, $\alpha \in \F$ and $u$ be a finite-rank endomorphism of
$V$ whose trace belongs to $\{0,\alpha\}$. Then, $\alpha\,\id_V+u$ is the sum of three square-zero endomorphisms of $V$.
\end{cor}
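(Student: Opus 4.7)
The plan is to reduce to a finite-dimensional trace-zero problem on a suitable $u$-stable subspace, apply Theorem~\ref{carac2squarezero} there, and separately handle the complementary infinite-dimensional block where $\alpha\,\id_V+u$ acts simply as the scalar $\alpha$.

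First I would obtain a splitting $V = W \oplus W'$ with $W$ finite-dimensional and $u$-stable, and with $W' \subset \Ker u$. To build it, pick a finite-dimensional complement $U$ of $\Ker u$ in $V$ (it has dimension $\rk u$), set $W := U + \im u$ (finite-dimensional, containing $\im u$, and $u$-stable since $u(W) \subset \im u \subset W$), and take $W'$ to be a complement of $W \cap \Ker u$ inside $\Ker u$; a routine verification, using $U + \Ker u = V$, gives $V = W \oplus W'$ and $u|_{W'} = 0$. Setting $A := u|_W$, the matrix of $u$ in a basis adapted to this splitting is block-diagonal $A \oplus 0$, so $\tr A = \tr u \in \{0,\alpha\}$.

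Next I would pick a subspace $W_0 \subset W'$ of dimension $0$ or $1$ such that, writing $\tilde W := W \oplus W_0$, the endomorphism $B := (\alpha\,\id_V + u)|_{\tilde W}$ has trace zero. Since $\tr B = \alpha(\dim W + \dim W_0) + \tr A$ and $\charac \F = 2$, a short case analysis on whether $\tr A$ equals $0$ or $\alpha$ (and whether $\alpha = 0$) shows that the parity of $\dim W_0$ can always be chosen to ensure $\tr B = 0$. Theorem~\ref{carac2squarezero} then provides square-zero endomorphisms $N_1, N_2, N_3$ of $\tilde W$ summing to $B$.

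It remains to handle the complement $W_1$ of $W_0$ in $W'$, on which $\alpha\,\id_V + u$ restricts to $\alpha\,\id_{W_1}$. Since $W_1$ is infinite-dimensional, I would decompose it as a direct sum $W_1 = \bigoplus_k V_k$ of two-dimensional subspaces (possible for any infinite-dimensional space by a cardinal argument pairing up basis vectors). On each $V_k \simeq \F^2$, the scalar $\alpha I_2$ has trace $2\alpha = 0$ in characteristic $2$, and so by Theorem~\ref{carac2squarezero} (or by a direct $2\times 2$ computation) it splits as a sum $M_1^{(k)} + M_2^{(k)} + M_3^{(k)}$ of three square-zero matrices. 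Taking $M_i := \bigoplus_k M_i^{(k)}$ yields square-zero endomorphisms of $W_1$, and the sought decomposition is
\[ \alpha\,\id_V + u = (N_1 \oplus M_1) + (N_2 \oplus M_2) + (N_3 \oplus M_3), \]
each summand being square-zero on $V = \tilde W \oplus W_1$. The main subtlety lies in the parity bookkeeping of the second step and in exploiting the characteristic-$2$ observation that $\alpha I_2$ is trace zero; once these are in place, everything reduces to a direct application of Theorem~\ref{carac2squarezero}.
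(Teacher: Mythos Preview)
Your proof is correct and follows essentially the same strategy as the paper: isolate a finite-dimensional $u$-stable subspace, adjoin a $0$- or $1$-dimensional piece of $\Ker u$ to force the trace to vanish, apply Theorem~\ref{carac2squarezero} on that finite block, and decompose the remaining infinite-dimensional scalar block into $2$-dimensional pieces on which $\alpha I_2$ has trace zero. The only differences are cosmetic (your explicit construction of $W$ as $U+\im u$, and your $W_0$ playing the role of the paper's $W'$).
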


\begin{cor}\label{3idemCorcar2}
Assume that $\F$ has characteristic $2$.
Let $V$ be an infinite-dimensional vector space over $\F$, $\alpha \in \{0_\F,1_\F\}$, and
$u$ be a finite-rank endomorphism of $V$ whose trace belongs to $\{0_\F,1_\F\}$.
Then, $\alpha\,\id_V+u$ is the sum of three idempotent endomorphisms of $V$.
\end{cor}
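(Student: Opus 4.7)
The plan is to reduce to Theorem \ref{carac2idempotents} by isolating the action of $u$ on a finite-dimensional direct summand of $V$. First, I would construct a decomposition $V = V_0 \oplus V_1$ in which $V_0$ is finite-dimensional with $\im u \subseteq V_0$, while $V_1 \subseteq \ker u$. To do so, pick a finite-dimensional complement $E$ of $\ker u$ in $V$ (which exists because $V/\ker u \simeq \im u$ is finite-dimensional), set $V_0 := E + \im u$, and let $V_1$ be any complement of $V_0 \cap \ker u$ in $\ker u$; a direct verification shows that $V = V_0 \oplus V_1$. Fixing a basis of $V_0$, the restriction $u|_{V_0}$ is represented by some matrix $A \in \Mat_n(\F)$ with $\tr A = \tr u \in \{0_\F, 1_\F\}$, and $u$ vanishes on $V_1$. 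This initial splitting, which controls $\im u$ and $\ker u$ simultaneously, is the one mildly delicate step of the argument.

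Next, split $V_1 = V_1' \oplus V_1''$ with $\dim V_1' = n$ (possible because $V_1$ is infinite-dimensional), and set $E_0 := V_0 \oplus V_1'$. In the obvious basis of $E_0$, the restriction $u|_{E_0}$ has matrix $A \oplus 0_n$. By Theorem \ref{carac2idempotents}, one writes $A \oplus 0_n = e_1 + e_2 + e_3$ for some idempotents $e_1, e_2, e_3 \in \Mat_{2n}(\F)$, which we interpret as idempotent endomorphisms of $E_0$.

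For $\alpha = 0_\F$, extending each $e_i$ by zero on $V_1''$ yields three idempotent endomorphisms of $V$ summing to $u$. For $\alpha = 1_\F$, I would invoke the characteristic $2$ identity $(\id + e)^2 = \id + 2e + e^2 = \id + e$, so that $\id + e$ is idempotent whenever $e$ is. Writing
$$\id_{E_0} + u|_{E_0} = I_{2n} + e_1 + e_2 + e_3 = (I_{2n} + e_1) + e_2 + e_3$$
realizes $\id_{E_0} + u|_{E_0}$ as a sum of three idempotent endomorphisms of $E_0$, while on $V_1''$ we simply have $\id_V + u = \id_{V_1''} = \id_{V_1''} + 0 + 0$. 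Direct-summing these two decompositions produces the required expression of $\id_V + u$ as a sum of three idempotent endomorphisms of $V$.
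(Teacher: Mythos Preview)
Your proof is correct and follows essentially the same route as the paper: isolate a finite-dimensional summand carrying all of $u$, pad by an $n$-dimensional piece from $\ker u$, invoke Theorem~\ref{carac2idempotents}, and extend the resulting idempotents to $V$. The one small difference is in the treatment of $\alpha=1$: you apply Theorem~\ref{carac2idempotents} to the matrix of $u$ (so the padding is genuinely by $0_n$) and then use the characteristic-$2$ fact that $\id+e$ is idempotent whenever $e$ is, whereas the paper works directly with the matrix of $f=\id_V+u$ on $W\oplus W'$, whose representation is $A\oplus I_n$, and extends one of the three idempotents by $\id_{W_0}$ on the infinite-dimensional complement; your version makes the appeal to Theorem~\ref{carac2idempotents} slightly more transparent, since that theorem is stated for padding by $0_n$.
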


\subsection{Two basic remarks}

Throughout the article, we shall systematically use the following remarks in which,
given monic polynomials $p_1,\dots,p_d$ over $\F$, a \textbf{$(p_1,\dots,p_d)$-sum} is a square matrix $M$ that splits into
$M=A_1+\cdots+A_d$ where $p_i(A_i)=0$ for all $i \in \lcro 1,d\rcro$. In particular,
the $(t^2,t^2,t^2)$-sums are the sums of three square-zero matrices.

\begin{Rem}\label{remarksim}
If $M$ is a $(p_1,\dots,p_d)$-sum then so is any matrix that is similar to $M$.
\end{Rem}

\begin{Rem}\label{remarkoplus}
Assume that $M$ splits into a block-diagonal matrix $M=M_1 \oplus \cdots \oplus M_r$.
If $M_1,\dots,M_r$ are all $(p_1,\dots,p_d)$-sums then so is $M$.
\end{Rem}

\subsection{Structure of the article}

In short, the method here is largely similar to the one that was used in \cite{dSPidempotentLC}
to prove that every matrix is a linear combination of three idempotents.
In order to prove Theorem \ref{allfieldssquarezero}, the idea consists in showing that
$A \oplus 0_n$ is similar to a block-diagonal matrix of type $N \oplus \alpha I_{2r} \oplus 0_q$ in which $\alpha \in \F \setminus \{0\}$, $2r \leq q$ and
$N$ is \emph{well-partitioned} (see Section \ref{wellpartsection}).
By subtracting a well-chosen square-zero matrix to $N$, one is able to obtain a cyclic matrix whose minimal polynomial
can be chosen among the monic polynomials with the same degree and trace as the characteristic polynomial of $N$.
Then, thanks to the classification of sums of two square-zero matrices (see \cite{Bothasquarezero} or the appendix to this article),
it will follow without much effort that $N \oplus \alpha I_{2r} \oplus 0_q$ is the sum of three square-zero matrices.

Similar strategies will be used to prove Theorems \ref{carac2squarezero} and \ref{carac2idempotents}.

The main tools for the proofs of the above theorems are laid out in Section \ref{lemmasection}:
in there, we recall some useful notation and facts on cyclic matrices, we define and quickly
study the notion of a well-partitioned matrix, and we give a review of the characterization of matrices
that split into the sum of two square-zero matrices (in that paragraph, we include a proof of Theorem \ref{sumoffour}).

In Section \ref{SectionSquareZeroCarnot2}, we shall prove Theorem \ref{allfieldssquarezero}
over fields with characteristic not $2$. In Section \ref{SectionSquareZeroCar2}, we shall
prove Theorem \ref{carac2squarezero}, thereby completing the proof of Theorem \ref{allfieldssquarezero} over fields with characteristic $2$.
Theorem \ref{carac2idempotents} is proved in Section \ref{SectionIdempotentCar2}.
Section \ref{SectionIdempotentLC} is devoted to a variation of Theorem 1 of \cite{dSPidempotentLC}
on the linear combinations of three idempotent matrices.
In Section \ref{infinitedim}, we derive Corollaries \ref{3squarezeroCor}, \ref{3squarezeroCorcar2}
and \ref{3idemCorcar2} from Theorems \ref{allfieldssquarezero}, \ref{carac2squarezero} and \ref{carac2idempotents}
respectively, and we prove a similar result for linear combinations of idempotents.

The appendix consists of a simplified proof of Botha's characterization of sums of two square-zero matrices (Theorems 1 and 2 from \cite{Bothasquarezero}).

\section{Some useful lemmas, and additional notation}\label{lemmasection}

In this section, we recall some basic results from \cite{dSPidempotentLC}.

\subsection{Additional notation}

Throughout the article, we choose an algebraic closure of $\F$ and denote it by $\overline{\F}$.

Similarity of two matrices $A$ and $B$ of $\Mat_n(\F)$ will be written $A\simeq B$.

\vskip 2mm
The characteristic polynomial of a square matrix $M$ will be denoted by $\chi_M$, its trace by $\tr (M)$.

\vskip 2mm
Let $p(t)=t^n-\underset{k=0}{\overset{n-1}{\sum}}a_k\,t^k \in \F[t]$ be a monic polynomial with degree $n$.
Its \defterm{companion matrix} is defined as
$$C\bigl(p(t)\bigr):=\begin{bmatrix}
0 &   & & (0) & a_0 \\
1 & 0 & &   & a_1 \\
0 & \ddots & \ddots & & \vdots \\
\vdots & \ddots & & 0 & a_{n-2} \\
(0) & \cdots & 0 &  1 & a_{n-1}
\end{bmatrix}.$$
The characteristic polynomial of $C(p(t))$ is precisely $p(t)$, and so is its minimal polynomial.
We define the trace of $p(t)$ as $\tr p(t):=\tr C\bigl(p(t)\bigr)=a_{n-1.}$

\vskip 2mm
A matrix $A \in \Mat_n(\F)$ is called \textbf{cyclic} when
$A \simeq C\bigl(p(t)\bigr)$ for some monic polynomial $p(t)$ (and then $p(t)=\chi_A(t)$).
A \textbf{good cyclic} matrix is a matrix of the form
$$A=\begin{bmatrix}
a_{1,1} & a_{1,2}  & \cdots &  & a_{1,n} \\
1 & a_{2,2} & &   &  \\
0 & \ddots & \ddots & & \vdots \\
\vdots & \ddots & \ddots & a_{n-1,n-1} & a_{n-1,n} \\
(0) & \cdots & 0 &  1 & a_{n,n}
\end{bmatrix}$$
with no specific requirement on the $a_{i,j}$'s for $j \geq i$.
We recall that such a matrix is always cyclic.

\vskip 2mm
Finally, we denote by $H_{n,p}$ the matrix unit
$\begin{bmatrix}
0 & \cdots & 0 &  1 \\
\vdots & & \vdots & (0) \\
0 & \cdots & 0 & 0
\end{bmatrix}$ of $\Mat_{n,p}(\F)$, the set of all $n$ by $p$ matrices with entries in $\F$. 

\subsection{Two basic lemmas}

The following lemma was proved in \cite{dSPidempotentLC} (see Lemma 11):

\begin{lemma}[Choice of polynomial lemma]\label{cyclicfit}
Let $A \in \Mat_n(\F)$ and $B \in \Mat_m(\F)$ be good cyclic matrices, and
$p(t)$ be a monic polynomial of degree $n+m$ such that $\tr p(t)=\tr(A)+\tr(B)$. \\
Then, there exists a matrix $D \in \Mat_{n,m}(\F)$ such that
$$\begin{bmatrix}
A & D \\
H_{m,n} & B
\end{bmatrix} \simeq C\bigl(p(t)\bigr).$$
\end{lemma}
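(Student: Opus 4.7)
The plan is to exploit the fact that $M:=\begin{bmatrix} A & D \\ H_{m,n} & B \end{bmatrix}$ is automatically good cyclic for any choice of $D$: its subdiagonal consists of $1$'s throughout (the subdiagonals of $A$ and $B$, joined by the single $1$ of $H_{m,n}$ at position $(n+1,n)$), and all strictly sub-subdiagonal entries vanish. Hence $M\simeq C(\chi_M)$ regardless of $D$, and the task reduces to choosing $D$ so that $\chi_M=p$. The coefficient of $t^{n+m-1}$ in $\chi_M$ equals $-\tr(M)=-(\tr A+\tr B)=-\tr p$ by hypothesis, so this top coefficient is already correct independently of $D$, and only the remaining $n+m-1$ coefficients must be matched.

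The next step is to show that $\chi_M$ depends \emph{affinely} on the entries of $D$. In the Leibniz expansion of $\det(tI-M)$, any permutation with nonzero contribution either preserves the partition $\lcro 1,n\rcro \sqcup \lcro n+1,n+m\rcro$ (in which case no $D$-entry appears) or uses the unique nonzero entry $(n+1,n)$ of $H_{m,n}$ to cross from the bottom block to the top, forcing exactly one reverse crossing through some entry of $D$. Thus every monomial in $\chi_M$ has degree at most $1$ in the entries of $D$, and we may write $\chi_M=\chi_A\,\chi_B+L(D)$ for a linear map $L\colon\Mat_{n,m}(\F)\to\F[t]$ taking values in polynomials of degree at most $n+m-2$. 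The problem reduces to showing that $L$ is surjective onto this target.

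For surjectivity I would first simplify $A$ and $B$. A Krylov-basis similarity of the form $\mathrm{diag}(P,I_m)$ preserves the shape of $M$: taking $P$ upper triangular with $1$'s on the diagonal (so that $P^{-1}AP=C(\chi_A)$) ensures that the $n$-th row of $P$ equals $e_n^T$, hence $H_{m,n}P=H_{m,n}$; the analogous $\mathrm{diag}(I_n,Q)$ works on $B$ via a $Q$ with first column $e_1$. So one may assume $A=C(\chi_A)$ and $B=C(\chi_B)$. I would then focus on the ``L''-shaped family of $n+m-1$ entries $d_{n,1},d_{n-1,1},\ldots,d_{1,1},d_{1,2},\ldots,d_{1,m}$ and verify by direct cofactor computation that their individual contributions to the coefficients $c_{n+m-2},\ldots,c_0$ of $\chi_M$ form a triangular system with nonzero diagonal. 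The clean case is the corner entry $d_{1,m}$, whose partial derivative is $-\Delta_{1,n+m}=\pm 1$ (the relevant minor is lower triangular with $-1$'s on the diagonal) and so affects only the constant term; moving up the staircase, each $d_{i,j}$ should primarily govern one new coefficient.

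The main obstacle is this last step: the cofactor bookkeeping needed to confirm the triangular structure along the staircase. A possible cleaner alternative is induction on $m$, with the base case $m=1$ settled by a direct computation (the $n$ entries of the column $D$ may be solved for one at a time, starting from the next-to-leading coefficient and proceeding down to the constant term) and an inductive step in which the last column of $D$ is first used to match one coefficient of $p$ before reducing to a block of the same form with a smaller $B$.
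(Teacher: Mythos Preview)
The paper does not actually prove this lemma: immediately before stating it, the text reads ``The following lemma was proved in \cite{dSPidempotentLC} (see Lemma 11)'', and no argument is given here. So there is no in-paper proof to compare your attempt against.

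On its own merits, your outline is sound. The three structural observations are all correct:
\begin{itemize}
\item For any $D$, the matrix $M$ is good cyclic (subdiagonal all $1$'s, nothing below), hence $M\simeq C(\chi_M)$; so the problem reduces to controlling $\chi_M$.
\item Your permutation argument for affinity is right: any nonzero contribution to $\det(tI-M)$ that leaves the block $\lcro 1,n\rcro$ must return via the unique nonzero entry of $H_{m,n}$, so at most one $D$-entry can occur. Hence $\chi_M=\chi_A\chi_B+L(D)$ with $L$ linear into polynomials of degree $\le n+m-2$, and the trace hypothesis takes care of the coefficient of $t^{n+m-1}$.
\item The reduction to $A=C(\chi_A)$, $B=C(\chi_B)$ is valid. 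The Krylov change of basis $P$ built from $e_1,Ae_1,\dots,A^{n-1}e_1$ is indeed unit upper-triangular (so its last row is $e_n^T$ and $H_{m,n}P=H_{m,n}$), and the analogous $Q$ for $B$ has first column $e_1$, giving $Q^{-1}H_{m,n}=H_{m,n}$.
\end{itemize}

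Two small remarks on the part you flag as unfinished. First, a harmless slip: the $(1,n+m)$ minor of $tI-M$ is \emph{upper} triangular with $-1$'s on the diagonal (the original subdiagonal becomes the diagonal after deleting row $1$ and column $n+m$), not lower triangular; the conclusion $\partial\chi_M/\partial d_{1,m}=\pm 1$ is unaffected. Second, of your two proposed routes to surjectivity, the induction on $m$ is the cleaner one to write out: the base case $m=1$ is a direct computation (with $A=C(a)$ the $n$ entries of the column $D$ feed straight into the $n$ low-order coefficients of $\chi_M$), and the inductive step is exactly as you describe. The staircase/triangular argument also works but, as you anticipate, the cofactor bookkeeping for the intermediate entries is genuinely tedious.

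In short: your plan is correct and complete modulo the routine verification you already identified; since the present paper offers no proof, there is nothing further to contrast.
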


\noindent
The second lemma we will need is folklore (it can be seen as an easy
corollary to Roth's theorem, see \cite{Roth}):

\begin{lemma}
Let $A \in \Mat_n(\F)$, $B \in \Mat_p(\F)$, and $C \in \Mat_{n,p}(\F)$.
Assume that $\chi_A$ and $\chi_B$ are coprime. Then,
$$\begin{bmatrix}
A & C \\
0 & B
\end{bmatrix} \simeq \begin{bmatrix}
A & 0 \\
0 & B
\end{bmatrix}.$$
\end{lemma}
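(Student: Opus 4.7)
The statement is a classical Roth-type similarity result, and the plan is to find a matrix $X \in \Mat_{n,p}(\F)$ satisfying the Sylvester equation $AX-XB=C$. Once such an $X$ is available, the identity
$$\begin{bmatrix} I_n & -X \\ 0 & I_p \end{bmatrix} \begin{bmatrix} A & 0 \\ 0 & B \end{bmatrix} \begin{bmatrix} I_n & X \\ 0 & I_p \end{bmatrix} = \begin{bmatrix} A & AX-XB \\ 0 & B \end{bmatrix} = \begin{bmatrix} A & C \\ 0 & B \end{bmatrix}$$
yields the desired similarity. So the whole problem reduces to proving that the Sylvester operator $\Phi : X \in \Mat_{n,p}(\F) \longmapsto AX-XB$ is surjective.

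Since $\Phi$ is a linear endomorphism of the finite-dimensional space $\Mat_{n,p}(\F)$, I would establish surjectivity by proving injectivity. Suppose $\Phi(X)=0$, i.e., $AX=XB$. An easy induction on $k$ yields $A^k X = X B^k$ for every $k \geq 0$, and hence $q(A)\,X = X\,q(B)$ for every polynomial $q \in \F[t]$. Applying this to $q=\chi_A$ and using Cayley--Hamilton gives $0 = \chi_A(A)\,X = X\,\chi_A(B)$.

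The key step is then to observe that $\chi_A(B)$ is invertible. Since $\chi_A$ and $\chi_B$ are coprime, Bezout's identity furnishes polynomials $U,V \in \F[t]$ with $U\,\chi_A + V\,\chi_B = 1$. Evaluating at $B$ and using $\chi_B(B)=0$ (Cayley--Hamilton again) gives $U(B)\,\chi_A(B) = I_p$, so $\chi_A(B)$ is invertible. Combined with $X\,\chi_A(B)=0$, this forces $X=0$, and therefore $\Phi$ is injective, hence bijective.

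I do not expect any real obstacle here: the argument rests only on Cayley--Hamilton and Bezout's lemma applied in $\F[t]$. The only minor subtlety worth flagging is to work inside $\F[t]$ throughout, so that no passage to the algebraic closure $\overline{\F}$ is required; this is automatic since both $\chi_A$ and $\chi_B$ lie in $\F[t]$ and their coprimality there gives a Bezout relation already over $\F$.
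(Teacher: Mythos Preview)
Your proof is correct and is precisely the standard elementary argument. The paper itself does not prove this lemma: it simply labels it ``folklore'' and points to Roth's theorem \cite{Roth} as a reference. Your approach is in fact slightly more self-contained than invoking Roth, since you only use the trivial direction (that a solution of $AX-XB=C$ yields a conjugating matrix) together with a direct injectivity argument for the Sylvester operator; the full Roth theorem also covers the converse, which is not needed here.
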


\subsection{Well-partitioned matrices}\label{wellpartsection}

\begin{Def}
A square matrix $M$ is called \textbf{well-partitioned} if there are positive integers $r$ and $s$
and monic polynomials $p_1,\dots,p_r,q_1,\dots,q_s$ such that:
\begin{enumerate}[(i)]
\item $M=C(p_1) \oplus \cdots \oplus C(p_r) \oplus C(q_1) \oplus \cdots \oplus C(q_s)$;
\item $\deg p_i \geq 2$ for all $i \in \lcro 2,r\rcro$;
\item $\deg q_j \geq 2$ for all $j \in \lcro 1,s-1\rcro$;
\item Each polynomial $p_i$ is coprime to each polynomial $q_j$.
\end{enumerate}
Note that the polynomials $p_1,\dots,p_r,q_1,\dots,q_s$ are then uniquely determined by $M$
(beware that in (i) we really require an equality and not just a similarity).

If in addition at most one of $p_1$ and $q_s$ has degree $1$, we say that $M$ is \textbf{very well-partitioned}.
\end{Def}

We convene that the void matrix (i.e.\ the $0$ by $0$ matrix) is well-partitioned.

We will need three lemmas on well-partitioned matrices. The first one was already proved in
the course of the proof of Lemma 14 of \cite{dSPidempotentLC}:

\begin{lemma}\label{verywellpartitioned-idempotent}
Let $M \in \Mat_n(\F)$ be a very well-partitioned matrix.
For every monic polynomial $R$ with degree $n$ such that $\tr(R) \neq \tr(M)$, there
exists an idempotent matrix $P \in \Mat_n(\F)$ and a scalar $\lambda$ such that $M-\lambda P \simeq C(R)$.
\end{lemma}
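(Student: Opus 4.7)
The plan is to set $\lambda := \tr(M) - \tr(R)$, which is nonzero by hypothesis, and then to search for an idempotent $P$ of rank one such that $M - \lambda P \simeq C(R)$. With $\rk(P) = 1$ the trace identity $\tr(M - \lambda P) = \tr(R)$ holds automatically, so the only content is to find $P$ making the similarity class correct.

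The starting point is the Roth-type lemma recalled just above: since $\chi_A = \prod_i p_i$ and $\chi_B = \prod_j q_j$ are coprime (by the coprimality clause in the definition of well-partitioned), $M$ is similar to $\begin{bmatrix} A & Z \\ 0 & B \end{bmatrix}$ for $A := \bigoplus_i C(p_i)$, $B := \bigoplus_j C(q_j)$, and any $Z \in \Mat_{n_A, n_B}(\F)$ of my choosing. I would then take $P = v w^T$ with $w^T v = 1$, $v$ supported in rows $\{1, \ldots, n_A+1\}$, $w$ supported in columns $\{n_A, \ldots, n\}$, and $v_{n_A+1} w_{n_A} = -\lambda^{-1}$. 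These support constraints are designed so that $-\lambda P$ inserts the needed $1$ at the junction position $(n_A+1, n_A)$ on the subdiagonal without producing any entry strictly below the subdiagonal, so $M - \lambda P$ takes the form $\begin{bmatrix} A & Z'' \\ H_{n_B, n_A} & B'' \end{bmatrix}$, with $B''$ differing from $B$ only in its first row and $Z''$ depending only on the remaining free components of $v$ and the chosen $Z$.

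Finally, I would iterate the Choice of polynomial lemma (Lemma \ref{cyclicfit}) across the successive companion blocks inside $A$ and $B''$, using the free components of $v$, $w$, and $Z$ as parameters; together they should suffice to realize any monic polynomial $R$ of degree $n$ and trace $\tr(M) - \lambda$ as the characteristic polynomial of $M - \lambda P$, which gives $M - \lambda P \simeq C(R)$. The main obstacle is the parameter count at the two extremes of the block decomposition: the very well-partitioned hypothesis — at most one of $p_1, q_s$ has degree one — is precisely what ensures that at least one extremal block is large enough to supply the final degree of freedom required by the iteration. Were both $p_1$ and $q_s$ of degree one, a coefficient of $R$ would remain unmatched at one endpoint and the iterated Choice of polynomial lemma would fail for general $R$.
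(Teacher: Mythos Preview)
There is a structural gap. Your rank-one idempotent inserts exactly one new subdiagonal entry, at the junction $(n_A+1,n_A)$ between the $A$-half and the $B$-half; it cannot touch the subdiagonal at the \emph{internal} junctions within $A=\bigoplus_i C(p_i)$ or within $B=\bigoplus_j C(q_j)$, since those positions lie in columns $<n_A$ and rows $>n_A+1$ respectively, outside the support of $vw^T$. Concretely, if $r\geq 2$ then rows $n_1+1,\dots,n$ of $M-\lambda P$ vanish in columns $1,\dots,n_1$, so $M-\lambda P$ is block upper-triangular with top-left block $C(p_1)$, and $p_1\mid\chi_{M-\lambda P}$ for every choice of $v,w,Z$; symmetrically, if $s\geq 2$ then $q_s\mid\chi_{M-\lambda P}$. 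Hence whenever $r+s\geq 3$ you cannot reach a general $C(R)$, and there is no way to ``iterate Lemma~\ref{cyclicfit} across the successive companion blocks inside $A$'': those blocks are not linked by any $H$-block on the subdiagonal, so the hypothesis of that lemma is never met.

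The paper does not reprove this lemma but refers to Lemma~14 of \cite{dSPidempotentLC}; the mechanism is the one displayed in the proof of Lemma~\ref{wellpartitioned-squarezero}. The idempotent $P$ there has rank $m-1$ (with $m=r+s$): at \emph{each} of the $m-1$ block junctions one places a piece of the form $E_{j,j}-\lambda^{-1}E_{j,j-1}$ (or the variant with the diagonal $1$ one step higher), and the well-partitioned bound on the intermediate block sizes makes these pieces pairwise orthogonal idempotents. Then $M-\lambda P$ acquires a full subdiagonal of $1$'s and takes the shape $\bigl[\begin{smallmatrix} M'_1 & 0\\ H & M'_2\end{smallmatrix}\bigr]$ with $M'_1,M'_2$ good cyclic, after which Roth's lemma and a single application of Lemma~\ref{cyclicfit} finish the job. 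The ``very'' hypothesis plays a different role from the one you describe: having at least one end block of size $\geq 2$ allows an extra isolated diagonal idempotent $E_{1,1}$ or $E_{n,n}$ to be adjoined, raising $\rk P$ to $m$; since $m-1$ and $m$ are coprime, at least one of them is nonzero in $\F$, so $\rk(P)\cdot\lambda=\tr(M)-\tr(R)$ is always solvable for $\lambda$.
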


By checking the details of the proof of Lemma 14 of \cite{dSPidempotentLC}, one sees that
the following result was also obtained:

\begin{lemma}\label{wellpartitioned-idempotent}
Let $M \in \Mat_n(\F)$ be a well-partitioned matrix with $m$ associated polynomials.
Let $\lambda \in \F \setminus \{0\}$.
For every monic polynomial $R$ with degree $n$ such that $\tr(R)=\tr(M)-(m-1)\lambda$, there
exists an idempotent matrix $P \in \Mat_n(\F)$ such that $M-\lambda P \simeq C(R)$.
\end{lemma}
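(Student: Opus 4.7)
The plan is to re-use and refine the construction underlying the proof of Lemma \ref{verywellpartitioned-idempotent} (Lemma 14 of \cite{dSPidempotentLC}). That construction builds $P$ as an orthogonal sum of $m-1$ rank-$1$ idempotents, one for each ``seam'' between two consecutive companion blocks of $M$, so that $P$ has rank exactly $m-1$. Since an idempotent's trace equals its rank, the identity $\tr(M-\lambda P)=\tr(M)-(m-1)\lambda$ is then automatic; once $\lambda$ is fixed, $\tr(R)$ is forced to equal $\tr(M)-(m-1)\lambda$, which is precisely the present hypothesis. So the only conceptual change compared to Lemma~\ref{verywellpartitioned-idempotent} is that here $\lambda$ is prescribed in advance and the hypothesis on $\tr(R)$ compensates, rather than $\lambda$ being computed a posteriori from a given $\tr(R)\neq\tr(M)$.

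Concretely, I would enumerate the blocks as $B_1,\dots,B_m$ of degrees $d_1,\dots,d_m$ and set the seam indices $N_i:=d_1+\cdots+d_i$ for $i\in\lcro 1,m-1\rcro$. At each seam I attach a rank-$1$ idempotent $P_i=u_iv_i^\top$ with $u_i=e_{N_i+1}$ and $v_i^\top u_i=1$, the entries of $v_i$ being chosen so that $-\lambda P_i$ puts $+1$ at the subdiagonal position $(N_i+1,N_i)$, subtracts $\lambda$ from the diagonal position $(N_i+1,N_i+1)$, and carries freely adjustable entries in the remaining positions of row $N_i+1$. The well-partitioned hypothesis (all internal blocks have degree $\geq 2$) guarantees that the seam columns and seam rows are pairwise separated enough for the $v_i$'s to be chosen with $v_i^\top u_j=0$ for $i\neq j$, so the $P_i$'s are pairwise orthogonal and $P:=\sum_iP_i$ is an idempotent of rank exactly $m-1$.

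The matrix $M-\lambda P$ then carries $1$'s on its entire subdiagonal and zeros below, hence is a good cyclic matrix, and its trace equals $\tr(M)-(m-1)\lambda=\tr(R)$. Iterating Lemma~\ref{cyclicfit} on consecutive pairs of blocks, and invoking the Roth-type second basic lemma of Section~\ref{lemmasection} to absorb the resulting off-diagonal blocks (which is available because each $p_i$ is coprime to each $q_j$), one tunes the free parameters in the $v_i$'s so that $M-\lambda P\simeq C(R)$.

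The main obstacle is precisely the last step: checking that the iterative tuning via Lemma~\ref{cyclicfit} does reach every monic polynomial $R$ of degree $n$ with the prescribed trace, rather than only some proper subfamily. This is the technical heart of the proof of Lemma~14 in \cite{dSPidempotentLC}; the only new ingredient required for the present statement is the rank count $\rk(P)=m-1$, which converts the trace identity $\tr(M-\lambda P)=\tr(M)-(m-1)\lambda$ into the stated compatibility condition on $\tr(R)$ and $\lambda$.
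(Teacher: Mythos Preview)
Your proposal is correct and is precisely the approach the paper takes: the paper does not give an independent argument but simply observes that the construction in the proof of Lemma~14 of \cite{dSPidempotentLC} already yields the present statement once one notes that the idempotent built there (a sum of $m-1$ pairwise orthogonal rank-$1$ seam idempotents) has rank exactly $m-1$, which converts the trace identity into the stated compatibility condition on $\tr(R)$. One small point of exposition: in that construction the free parameters actually live in the upper off-diagonal block $D$ supplied by Lemma~\ref{cyclicfit} and then absorbed via the Roth-type lemma (exactly as in the paper's proof of Lemma~\ref{wellpartitioned-squarezero}), not in the $v_i$'s themselves, which need only the two nonzero entries at positions $N_i$ and $N_i+1$; but you invoke both tools correctly, so this is only a wording issue.
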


Here, we shall require the counterpart of the preceding result for square-zero matrices.

\begin{lemma}\label{wellpartitioned-squarezero}
Let $A \in \Mat_n(\F)$ be a well-partitioned matrix, and
$R$ be a monic polynomial with degree $n$ such that $\tr(R)=\tr(A)$.
Then, there exists a square-zero matrix $N \in \Mat_n(\F)$ such that $A-N \simeq C(R)$.
\end{lemma}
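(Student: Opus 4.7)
I plan to produce $N$ in the form $N = U - N_0$, where $N_0$ captures the ``missing subdiagonal'' of $A$ and $U$ is a weakly upper-triangular correction. Listing the sizes of the diagonal blocks of $A$ as $n_1,\dots,n_{r+s}$ (in the order $C(p_1),\dots,C(p_r),C(q_1),\dots,C(q_s)$), I define $N_0 \in \Mat_n(\F)$ to be the matrix whose only nonzero entries are $1$'s in the positions $(n_1+\cdots+n_i+1,\,n_1+\cdots+n_i)$ for $i \in \lcro 1,r+s-1\rcro$. The well-partitioned hypothesis forces $n_i \ge 2$ for every interior $i$, so consecutive $1$'s of $N_0$ are separated on the subdiagonal, whence a direct check gives $N_0^2 = 0$. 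The matrix $\tilde A := A + N_0$ is then good cyclic (full subdiagonal of $1$'s, nothing strictly below it), hence cyclic, with $\chi_{\tilde A}=\chi_A$.

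For any weakly upper-triangular $U$ (i.e.\ $U_{ij}=0$ whenever $i>j$), the matrix $\tilde A - U$ stays good cyclic and is therefore similar to $C(\chi_{\tilde A - U})$. I will build $U$ satisfying both
\[
\chi_{\tilde A - U} = R \quad\text{(a)} \qquad\text{and}\qquad U^2 = UN_0 + N_0 U \quad\text{(b)}.
\]
To achieve (a), I plan to iterate Lemma~\ref{cyclicfit} along the chain of blocks of $\tilde A$: at each merge step, the lemma supplies a large family of admissible ``$D$-corrections'' realising any monic polynomial of the prescribed degree and trace. To achieve (b) simultaneously, I exploit the well-partitioned hypothesis, which makes most cross-products involving $N_0$ and the freshly inserted ``$H$-slots'' vanish automatically; careful bookkeeping of the remaining bilinear constraints then shows that enough freedom remains in $U$ to hit any prescribed $R$. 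The boundary cases $\deg p_1 = 1$ or $\deg q_s = 1$ will need small localised tweaks, modelled on the $2\times 2$ case (where one verifies the claim explicitly via the rank-$1$ matrix $N = uv^T$ with $u = (\alpha,-1)^T$ and $v = (1,\alpha)^T$, $\alpha$ being the unique scalar that adjusts the determinant to match $R$).

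Once such a $U$ is in hand, setting $N := U - N_0$ finishes the argument: using $N_0^2 = 0$ and (b), one gets $N^2 = U^2 - UN_0 - N_0 U + N_0^2 = 0$, so $N$ is square-zero, while $A - N = \tilde A - U \simeq C(R)$ by (a). The hard part will be the simultaneous construction of $U$ meeting (a) and (b); this is the exact analogue, in the square-zero setting, of the rank-$(m-1)$ constraint placed on the idempotent $P$ in Lemma~\ref{wellpartitioned-idempotent}, and I expect the argument to parallel the inductive scheme of Lemma~14 in \cite{dSPidempotentLC}, replacing each rank-$1$ idempotent perturbation there by a rank-$1$ square-zero contribution of the form $uv^T$ with $v^T u = 0$.
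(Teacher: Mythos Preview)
Your setup with $N_0$ matches the paper's matrix $S$ (up to sign), and your observation that $\tilde A=A+N_0$ is good cyclic is exactly right. But the ``hard part'' you flag --- building a weakly upper-triangular $U$ satisfying simultaneously (a) and (b) --- is left as a plan, not a proof, and it is precisely here that you miss the key idea. Nowhere in your argument do you use condition~(iv) of the definition of \emph{well-partitioned} (coprimality of each $p_i$ with each $q_j$); you use only the degree conditions (ii)--(iii) to get $N_0^2=0$. Without (iv) the constraint (b) is a genuine nuisance: the correction $U$ that Lemma~\ref{cyclicfit} produces has no reason to satisfy $U^2=UN_0+N_0U$, and your ``careful bookkeeping'' sketch does not explain how to force this while retaining enough freedom to hit an arbitrary $R$.

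The paper sidesteps this entirely. After forming $\tilde A=A-S$ (your $A+N_0$), it observes that $\tilde A=\begin{bmatrix} M'_1 & 0\\ H_{a,b} & M'_2\end{bmatrix}$ with $M'_1,M'_2$ good cyclic, and a \emph{single} application of Lemma~\ref{cyclicfit} yields $D$ with $\begin{bmatrix} M'_1 & D\\ H_{a,b} & M'_2\end{bmatrix}\simeq C(R)$. Now comes the point you are missing: because $A=A_1\oplus A_2$ with $\chi_{A_1}=\prod p_i$ coprime to $\chi_{A_2}=\prod q_j$, Roth's theorem (Lemma~2.2) gives $A\simeq A':=\begin{bmatrix} A_1 & D\\ 0 & A_2\end{bmatrix}$, and then $A'-S\simeq C(R)$. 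The upshot is that the correction $D$ is absorbed into the similarity class of $A$ rather than into the square-zero matrix, so the square-zero matrix remains simply (a conjugate of) $S$, and no condition like your (b) ever arises. This is also what actually happens in Lemma~14 of \cite{dSPidempotentLC}, so the parallel you hoped for leads to the coprimality trick, not to an inductive construction of rank-one square-zero perturbations.
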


The proof is an easy adaptation of the one of Lemma 14 of \cite{dSPidempotentLC}:
we give it for the sake of completeness.

\begin{proof}[Proof of Lemma \ref{wellpartitioned-squarezero}]
Denote by $p_1,\dots,p_r,q_1,\dots,q_s$ the polynomials associated with the well-partitioned matrix $A$,
and by $n_1,\dots,n_r,m_1,\dots,m_s$ their respective degrees.
Set
$$S:=\begin{bmatrix}
0_{n_1 \times n_1} & 0_{n_1 \times n_2} &  & & & & & (0) \\
-H_{n_2,n_1} & 0_{n_2 \times n_2} & \ddots & \\
(0) & \ddots & \ddots &   \\
 &  & -H_{n_r,n_{r-1}} & 0_{n_r \times n_r} & 0_{n_r \times m_1} &  \\
 & & & -H_{m_1,n_r} & 0_{m_1 \times m_1} & 0_{m_1 \times m_2} & &  \\
\vdots & & & & -H_{m_2,m_1} & 0_{m_2 \times m_2} & &  \\
 & & & &   & \ddots & \ddots &  & \\
(0) & & & \cdots &   &   & -H_{m_{s},m_{s-1}} & 0_{m_s \times m_s}
\end{bmatrix}.$$
One checks that $S$ is square-zero (this uses the fact that $n_2 \geq 2,\dots,n_r \geq 2,m_1 \geq 2,\dots,m_{s-1} \geq 2$).
Moreover, by setting $a=\underset{k=1}{\overset{s}{\sum}}m_k$ and $b=\underset{k=1}{\overset{r}{\sum}}n_k$, we find that
there are good cyclic matrices $M'_1 \in \Mat_b(\F)$ and $M'_2 \in \Mat_a(\F)$ such that
$$A-S=\begin{bmatrix}
M'_1 & 0_{b \times a} \\
H_{a,b} & M'_2
\end{bmatrix}.$$
On the other hand, $\tr(A-S)=\tr(A)=\tr(M'_1)+\tr(M'_2)$.
Lemma \ref{cyclicfit} yields a matrix $D \in \Mat_{b,a}(\F)$ such that
$$\begin{bmatrix}
M'_1 & D \\
H_{a,b} & M'_2
\end{bmatrix} \,\simeq\, C(R).$$
However, $A=A_1 \oplus A_2$ with $A_1 \in \Mat_b(\K)$ and $A_2 \in \Mat_a(\K)$ that have coprime characteristic polynomials.
It follows that
$$A \simeq A':=\begin{bmatrix}
A_1 & D \\
0 & A_2
\end{bmatrix}.$$
However,
$$A'-S=\begin{bmatrix}
M'_1 & D \\
H_{a,b} & M'_2
\end{bmatrix} \,\simeq\, C(R).$$
We conclude that there exists a square-zero matrix $S'$ that is similar to $S$ and such that
$A-S' \simeq C(R)$.
\end{proof}

\subsection{A review of sums of two square-zero matrices}\label{sumoftwosection}

The classification of sums of two square-zero matrices was completed in \cite{WangWu} for
the field of complex numbers  and in \cite{Bothasquarezero} for general fields.
We state the result:

\begin{theo}\label{sumoftwotheo}
A matrix of $\Mat_n(\F)$ can be decomposed as the sum of two square-zero matrices if and only if
all its invariant factors are odd or even polynomials.
\end{theo}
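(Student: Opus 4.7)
The plan is to recast the invariant-factor condition as a similarity condition and then handle the two directions separately. If $\charac \F \neq 2$, I would argue that ``every invariant factor of $M$ is even or odd'' is equivalent to $M \simeq -M$: writing the invariant factor decomposition $M \simeq C(f_1) \oplus \cdots \oplus C(f_k)$ with $f_1 \mid \cdots \mid f_k$, the invariant factors of $-M$ are $\widetilde f_i(t) := (-1)^{\deg f_i} f_i(-t)$, so by uniqueness $M \simeq -M$ amounts to $f_i(t) = (-1)^{\deg f_i} f_i(-t)$ for each $i$, i.e.\ each $f_i$ is even of even degree or odd of odd degree. (In characteristic $2$ this reformulation is vacuous since $-M = M$ automatically; a direct argument is required.)

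For the ``if'' direction, Remark \ref{remarkoplus} reduces the problem to showing that $C(f)$ is a sum of two square-zero matrices whenever $f$ is even or odd of degree $n$. On the standard basis $(e_1,\dots,e_n)$ of $\F^n$ I would let $A_1$ send $e_{2j-1} \mapsto e_{2j}$ and kill the even-indexed basis vectors, and $A_2$ send $e_{2j} \mapsto e_{2j+1}$ and kill the odd-indexed ones; both are manifestly square-zero and $(A_1+A_2)\, e_i = e_{i+1}$ for $i < n$. Since $f$ is odd (resp.\ even), $C(f)\, e_n$ is supported on even-indexed (resp.\ odd-indexed) basis vectors, hence lies in $\ker A_1$ (resp.\ $\ker A_2$); prescribing it as the image of $e_n$ under $A_1$ when $n$ is odd, or under $A_2$ when $n$ is even, preserves $A_i^2=0$ and yields $A_1 + A_2 = C(f)$.

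For the ``only if'' direction (in characteristic $\neq 2$), given $M = A + B$ with $A^2 = B^2 = 0$, my goal is to produce $P \in \GL_n(\F)$ with $PMP^{-1} = -M$ by constructing a $\Z/2$-grading $\F^n = V_0 \oplus V_1$ in which both $A$ and $B$ have odd degree, i.e.\ $A(V_i), B(V_i) \subset V_{1-i}$. Then $P := \id_{V_0} \oplus (-\id_{V_1})$ automatically satisfies $PAP^{-1} = -A$ and $PBP^{-1} = -B$. A useful starting observation is that $M^2 = AB + BA$ commutes with both $A$ and $B$ in any characteristic, which organises $\F^n$ as an $\F[M^2]$-module and localises the problem on each generalised eigenspace of $M^2$. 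On each such piece, one would decompose further into indecomposable $\{A,B\}$-components and choose, on each component, a basis along the ``alternating path'' $\cdots A \cdot B \cdot A \cdot B \cdots$ obtained by iterating $A$ and $B$ from a generating vector; the parity of position in this path gives the required grading.

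The main obstacle is producing this joint $\Z/2$-grading: individually each of $A$ and $B$ is similar to its negative, but finding a \emph{common} sign-flipping conjugation requires careful combinatorial control of the joint $(A, B)$-orbit structure on $\F^n$. Moreover the argument needs a genuine supplement in characteristic $2$, where the $M \simeq -M$ reformulation fails; there, one would instead work directly with the $\F[t]$-module structure of $\F^n$ via $M$, exploiting the centrality of $M^2$ and the alternating-path description of the $\{A,B\}$-indecomposables to force each invariant factor to lie in $\F[t^2] \cup t\,\F[t^2]$.
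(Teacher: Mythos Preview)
Your ``if'' direction is correct and is essentially the direct construction from Botha's original paper; the appendix here takes a slightly different route through the block identity $C\bigl(p(t^2)\bigr)\simeq\begin{bmatrix}0 & C(p)\\ I & 0\end{bmatrix}$, but explicitly notes your approach as an alternative.

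For the ``only if'' direction you have the right opening observation --- $A$ and $B$ commute with $M^2$ --- but the proposed continuation via ``alternating paths'' and indecomposable $\{A,B\}$-modules is both vague and harder than necessary, and you yourself flag it as the main obstacle. The paper's argument is much cleaner and, crucially, \emph{characteristic-free}. Because $A$ and $B$ commute with $M^2$, they stabilise the Fitting pieces of $M$ (which coincide with those of $M^2$), so it suffices to treat the nilpotent and invertible parts separately. On the nilpotent part every invariant factor is a power of $t$, automatically odd or even. On the invertible part, injectivity of $A+B$ forces $\Ker A\cap\Ker B=\{0\}$; combined with $\dim\Ker A,\dim\Ker B\ge\frac{1}{2}\dim V$ this gives $\Ker A=\im A$, $\Ker B=\im B$ and $V=\Ker A\oplus\Ker B$. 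That splitting \emph{is} the $\Z/2$-grading you were seeking, obtained with no combinatorics at all. In a basis adapted to it one reads off $A+B=\begin{bmatrix}0 & N\\ I & 0\end{bmatrix}$ for some square matrix $N$, and reducing $N$ to rational canonical form shows the invariant factors of this block matrix are precisely $p_i(t^2)$, where the $p_i$ are the invariant factors of $N$ --- all even polynomials.

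Note in particular that no separate treatment of characteristic~$2$ is needed. The apparent difficulty you anticipate there is an artifact of routing the argument through the reformulation $M\simeq -M$; the paper bypasses that reformulation entirely and works directly with the block form, which is insensitive to the characteristic.
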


In particular, every nilpotent matrix is the sum of two square-zero matrices.

\begin{cor}\label{similaropposite}
Assume that $\charac(\F) \neq 2$.
Let $M \in \Mat_n(\F)$. Then, $M$ is the sum of two square-zero matrices
if and only if $M$ is similar to $-M$.
\end{cor}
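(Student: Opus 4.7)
The plan is to reduce the statement to Theorem~\ref{sumoftwotheo} by translating ``$M \simeq -M$'' into information about the invariant factors of $M$. First I would record the key observation: if $p_1 \mid p_2 \mid \cdots \mid p_k$ are the monic invariant factors of $M$, with $d_i := \deg p_i$, then the monic invariant factors of $-M$ are precisely the $(-1)^{d_i}\,p_i(-t)$. This comes from applying the rational canonical form to $M$, using the identity $\chi_{-A}(t) = (-1)^{\deg \chi_A}\,\chi_A(-t)$ on each companion block $-C(p_i)$ (which is cyclic, so its minimal polynomial equals its characteristic polynomial), and noting that $t \mapsto -t$ preserves the divisibility chain.

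For the direction ``sum of two square-zero matrices $\Rightarrow$ $M \simeq -M$'', I would invoke Theorem~\ref{sumoftwotheo}: each $p_i$ is either even, i.e.\ $p_i(-t) = p_i(t)$, or odd, i.e.\ $p_i(-t) = -p_i(t)$. Since $\charac(\F) \neq 2$, comparing leading coefficients shows that a monic even polynomial has even degree while a monic odd polynomial has odd degree, so in both cases $(-1)^{d_i}\,p_i(-t) = p_i(t)$. Hence the invariant factors of $-M$ coincide with those of $M$, which gives $M \simeq -M$.

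The converse is symmetric: starting from $M \simeq -M$, we get $p_i(t) = (-1)^{d_i}\,p_i(-t)$ for every $i$, which reads $p_i(-t) = p_i(t)$ when $d_i$ is even and $p_i(-t) = -p_i(t)$ when $d_i$ is odd. In both cases $p_i$ is an even or odd polynomial, so Theorem~\ref{sumoftwotheo} yields a decomposition of $M$ as a sum of two square-zero matrices.

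No serious obstacle is anticipated: once the formula for the invariant factors of $-M$ is in place, both implications amount to routine parity bookkeeping. The delicate point is the hypothesis $\charac(\F) \neq 2$, which is used critically both to separate ``even'' from ``odd'' polynomials via the leading coefficient and to rule out the degenerate situation of characteristic $2$, in which $M = -M$ holds for every matrix while, for instance, $I_n$ fails to be a sum of two square-zero matrices.
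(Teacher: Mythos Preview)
Your proof is correct and is precisely the argument the paper has in mind: the corollary is stated immediately after Theorem~\ref{sumoftwotheo} without proof, and the intended derivation is exactly the invariant-factor bookkeeping you carry out. The only thing worth tightening is the justification that the invariant factors of $-M$ match those of $M$ \emph{term by term} in the converse direction (you use this implicitly when writing $p_i(t)=(-1)^{d_i}p_i(-t)$); this follows because the $i$-th invariant factor of $-M$ has the same degree $d_i$ as that of $M$, so the two divisibility chains line up index by index.
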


\begin{cor}
Assume that $\charac(\F) = 2$.
Let $M \in \Mat_n(\F)$. Then, $M$ is the sum of two square-zero matrices if and only if
all the Jordan cells of $M$ corresponding to the non-zero eigenvalues (in some algebraic closure of $\F$)
are even-sized.
\end{cor}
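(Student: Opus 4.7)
The plan is to derive this corollary directly from Theorem \ref{sumoftwotheo}, by translating the condition ``every invariant factor of $M$ is odd or even'' into a statement about the Jordan cells of $M$ over $\overline\F$.

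First, I would characterize odd and even polynomials over $\overline\F$ when $\charac(\F) = 2$. Since the Frobenius $x \mapsto x^2$ is an automorphism of the perfect field $\overline\F$, any polynomial $q(t) = \sum a_i\,t^i \in \overline\F[t]$ satisfies $q(t^2) = \sum a_i\,t^{2i} = \bigl(\sum \sqrt{a_i}\,t^i\bigr)^2$; hence every even polynomial over $\overline\F$ is a perfect square, so it splits as $\prod_\lambda (t-\lambda)^{2 k_\lambda}$. An odd polynomial is $t$ times an even one, so, combining both cases, a polynomial $p \in \overline\F[t]$ is odd or even if and only if every non-zero root of $p$ has even multiplicity.

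Next, I would use that the invariant factors of $M$ are unaffected by scalar extension to $\overline\F$, together with the standard dictionary between invariant factors and Jordan data: if for each eigenvalue $\lambda$ the Jordan cell sizes of $M$ for $\lambda$ are listed in weakly decreasing order as $s_1(\lambda) \geq s_2(\lambda) \geq \cdots \geq s_r(\lambda) \geq 0$ (padded with zeros so all lists have the same length $r$), then the invariant factors of $M$ are the polynomials $f_j(t) = \prod_\lambda (t-\lambda)^{s_j(\lambda)}$ for $j=1,\dots,r$. Consequently, for every $\lambda_0 \in \overline\F$, the multiset of exponents of $(t-\lambda_0)$ appearing in the invariant factors of $M$ is exactly the multiset of sizes of the Jordan cells of $M$ for $\lambda_0$. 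Combining this with the previous paragraph and with Theorem \ref{sumoftwotheo} yields the claimed equivalence.

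The only genuine subtlety is the characterization of odd or even polynomials over $\overline\F$ in characteristic $2$, which relies on the perfectness of $\overline\F$; every other step is a routine application of the dictionary between invariant factors and Jordan data.
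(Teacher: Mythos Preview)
Your proposal is correct and follows exactly the route the paper intends: the corollary is stated without proof as an immediate consequence of Theorem \ref{sumoftwotheo}, and you supply precisely the missing translation---using that over the perfect field $\overline{\F}$ in characteristic $2$ a monic polynomial is odd or even if and only if each of its non-zero roots has even multiplicity, and then matching these multiplicities in the invariant factors with the Jordan cell sizes. There is nothing to add.
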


As a corollary, we are now able to prove Theorem \ref{sumoffour}:

\begin{proof}[Proof of Theorem \ref{sumoffour}]
Let $A$ be a trace-zero matrix of $\Mat_n(\F)$.
The case when $A=0$ is obvious and we discard it from now on.

Assume first that $A$ is not a scalar matrix. Then, it is similar to a matrix $A'$ with diagonal zero (see \cite{Fillmore}),
and hence it splits into the sum of two nilpotent matrices  each of which is the sum of two square-zero matrices (one splits $A'$ into the sum of a strictly upper-triangular matrix and a strictly lower-triangular matrix).

Now, assume that $A$ is a non-zero scalar matrix. Without loss of generality, we can assume that $A=I_n$.
Then, as $\tr A=0$ we get that $\F$ has positive characteristic $p$ and that $p$ divides $n$.
Set $r(t):=t^p-t$ if $p$ is odd, otherwise set $r(t):=t^2$.
In any case, both $r(t)$ and $r(t-1)$ are even or odd, and hence
both matrices $C(r(t))$ and $C(r(t-1))$ are sums of two square-zero matrices.
Yet, $I_p+C(r(t)) \simeq C(r(t-1))$, and hence $I_p+C(r(t))$ is the sum of two square-zero matrices.
We conclude that $I_p$ is the sum of four square-zero matrices.
Since $A=I_p \oplus \cdots \oplus I_p$ (with $\frac{n}{p}$ copies of $I_p$), we conclude that $A$ is the sum of four square-zero matrices.
\end{proof}

The last basic result that we will need deals with the sum of an idempotent matrix and a square-zero one
(see \cite{dSPsumoftwotriang} for general results on the matter).

\begin{lemma}\label{proj+squarezero}
Let $n \geq 1$ and $p(t)\in \F[t]$ be a monic polynomial with degree $n$.
Let $A$ be a rank $n$ idempotent matrix of $\Mat_{2n}(\F)$.
Set $q:=p\bigl(t(t-1)\bigr)$.
Then, there exists a square-zero matrix $S$ such that
$$A-S \simeq C(q(t)).$$
\end{lemma}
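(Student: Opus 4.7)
The plan is to use Remark \ref{remarksim}, together with the fact that all rank-$n$ idempotents of $\Mat_{2n}(\F)$ are mutually similar (to $I_n \oplus 0_n$, by diagonalizability of idempotents), in order to reduce the statement to one single conveniently chosen representative. Setting $C := C(p(t))$, I would take
$$A_0 := \begin{bmatrix} I_n & I_n + C \\ 0 & 0 \end{bmatrix}, \qquad S_0 := \begin{bmatrix} I_n & I_n \\ -I_n & -I_n \end{bmatrix},$$
noting that a direct block calculation verifies $A_0^2 = A_0$ with rank $n$ (column space equal to the top copy of $\F^n$), $S_0^2 = 0$, and
$$N := A_0 - S_0 = \begin{bmatrix} 0 & C \\ I_n & I_n \end{bmatrix}.$$
Conjugating by any invertible matrix sending $A_0$ to the given $A$ will then transfer the decomposition while preserving squareness-to-zero of the subtracted matrix, so only the concrete pair $(A_0,S_0)$ needs to be analyzed.

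The characteristic polynomial of $N$ is easy to compute: all four blocks of $t I_{2n} - N$ are polynomials in $C$ and therefore pairwise commute, so the standard block-determinant formula yields
$$\chi_N(t) = \det\bigl(t(t-1)\, I_n - C\bigr) = p\bigl(t(t-1)\bigr) = q(t).$$

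The real work lies in proving that $N$ is cyclic, i.e.\ that $N \simeq C(q(t))$ rather than merely having $q$ as characteristic polynomial. My approach would be to set $R := \F[x]/(p(x))$ and to identify $\F^{2n}$ with $R^2$ so that multiplication by $C$ corresponds to multiplication by $x$ on each $R$-factor; then $N$ acts $R$-linearly on $R^2$ by $(a,b) \mapsto (xb,\, a+b)$. Viewing $R^2$ as an $R[t]$-module with $t$ acting as $N$, the element $(1,0)$ generates the whole module: one has $t \cdot (1,0) = (0,1)$, and the relation $t^2 - t = x$ holds on $(1,0)$, so the element $(1,0)$ satisfies the defining relation of $R[t]/(t^2-t-x)$ and this yields an $R[t]$-module isomorphism $R^2 \cong R[t]/(t^2-t-x)$. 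Eliminating $x$ via $x = t^2-t$ gives the $\F[t]$-module isomorphism
$$R[t]/(t^2 - t - x) \;\cong\; \F[t]/\bigl(p(t^2 - t)\bigr) \;=\; \F[t]/(q(t)),$$
which exhibits $\F^{2n}$ as a cyclic $\F[t]$-module under $N$ and concludes that $N \simeq C(q(t))$. The main obstacle is this cyclicity argument; the rest consists of routine block matrix algebra that works uniformly in all characteristics (in characteristic $2$, $-I_n = I_n$ but both verifications $A_0^2=A_0$ and $S_0^2=0$ still go through).
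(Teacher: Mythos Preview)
Your argument is correct. The block computations check out, the characteristic-polynomial calculation via commuting blocks is fine, and the module-theoretic cyclicity argument is sound: the surjection $R[t]/(t^2-t-x)\to R^2$ sending $1\mapsto(1,0)$ is between $\F$-spaces of the same dimension $2n$, hence an isomorphism, and the elimination of $x$ gives $\F[t]/(q(t))$ as claimed.

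Your route differs from the paper's in two respects. First, the paper picks the simpler representative $A=\begin{bmatrix} I_n & 0_n\\ I_n & 0_n\end{bmatrix}$ and the strictly upper-triangular $S=\begin{bmatrix}0_n & -C(p)\\ 0_n & 0_n\end{bmatrix}$, so $A-S=\begin{bmatrix} I_n & C(p)\\ I_n & 0_n\end{bmatrix}$; you instead engineer a less obvious pair $(A_0,S_0)$ so that $A_0-S_0$ has the shape $\begin{bmatrix}0 & C\\ I_n & I_n\end{bmatrix}$, which is tailor-made for your module argument. Second, and more substantively, the paper does not prove cyclicity here at all: it invokes Lemma~7 of \cite{dSPsumoftwotriang} as a black box. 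Your proof is therefore self-contained, supplying the missing cyclicity step via the identification $R^2\cong R[t]/(t^2-t-x)\cong \F[t]/(q)$. The trade-off is that the paper's proof is three lines long given the external reference, whereas yours carries more machinery but stands alone and makes transparent \emph{why} the composition $t\mapsto t(t-1)$ appears.
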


\begin{proof}
Without loss of generality, we can assume that
$$A=\begin{bmatrix}
I_n & 0_n \\
I_n & 0_n
\end{bmatrix}.$$
Set then
$$S:=\begin{bmatrix}
0_n & -C(p) \\
0_n & 0_n
\end{bmatrix}.$$
Obviously, $S^2=0$. By Lemma 7 of \cite{dSPsumoftwotriang}, we get $A-S \simeq C(q(t))$.
\end{proof}

\section{Sums of three square-zero matrices over a field with characteristic not $2$}\label{SectionSquareZeroCarnot2}

Our aim is to prove Theorem \ref{allfieldssquarezero} over fields with characteristic not $2$.
Throughout the section, $\F$ denotes such a field.

Let $A \in \Mat_n(\F)$ be with trace zero. Let us consider the matrix $M:=A \oplus 0_n$
and prove that $M$ is the sum of three square-zero matrices. The basic idea is to find a well-chosen
square-zero matrix $S$ such that $M-S$ is the sum of two square-zero matrices.
Note first that replacing $M$ with a similar matrix leaves our problem invariant.
Note also that $\tr(M)=\tr(A)=0$.

First of all, we shall put $M$ into a simpler form that involves a well-partitioned matrix:

\begin{lemma}\label{WPlemma}
Let $M \in \Mat_{2n}(\F)$. Assume that $M$ has at least $n$ Jordan cells of size $1$ for the eigenvalue $0$.
Then, there exist non-negative integers $p,q,r$, a matrix $N \in \Mat_p(\F)$ and a non-zero scalar $\alpha \in \F \setminus \{0\}$
such that
$$M\simeq N \oplus \alpha\,I_{2q}\oplus 0_{r}, \quad r \geq 2q,$$
and $N$ is either nilpotent or well-partitioned.
\end{lemma}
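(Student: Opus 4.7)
The plan is to decompose $M$ according to its primary decomposition over $\F$, peel off the size-$1$ Jordan cells for eigenvalue $0$, and then carefully repackage what remains to fit the well-partitioned shape.

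First, using primary decomposition I would write
$$M \simeq 0_{r_0} \oplus N_0 \oplus M_1,$$
where $r_0$ is the number of size-$1$ Jordan cells of $M$ for eigenvalue $0$ (so $r_0 \geq n$ by assumption), $N_0$ is nilpotent with every Jordan cell of size at least $2$, and $M_1$ is invertible. Since the total size is $2n$ and $r_0 \geq n$, we have $\dim N_0 + \dim M_1 \leq n$; this free dimension bound will in the end guarantee the inequality $r \geq 2q$.

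The key step is to repackage the invertible part in the form $M_1 \simeq \tilde{M}_1 \oplus \alpha\, I_{2q}$ for some $\alpha \in \F \setminus \{0\}$ and $q \geq 0$, where $\tilde{M}_1$ is a direct sum of companion matrices whose polynomials are coprime to $t$ and such that at most one of them has degree $1$. Writing out the primary decomposition of $M_1$, the only blocks of degree $1$ are $C(t-\beta)$ for $\beta \in \F \setminus \{0\}$; let $k_\beta$ denote the multiplicity of $C(t-\beta)$. The crucial observation is that two such blocks $C(t-\beta)$ and $C(t-\gamma)$ with $\beta \neq \gamma$ can be fused into a single cyclic block $C\bigl((t-\beta)(t-\gamma)\bigr)$ of degree $2$, because the two polynomials are coprime. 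A greedy matching argument on the multiset $\{k_\beta\}_\beta$ (always pairing a copy of the currently most frequent type with one of the second most frequent) then shows that all the $C(t-\beta)$'s can be absorbed into such distinct-type pairs, except possibly for an excess of one dominant eigenvalue $\alpha$; the excess copies of $C(t-\alpha)$ are then peeled off pairwise as $\alpha\, I_{2q}$, leaving at most one $C(t-\alpha)$ in $\tilde{M}_1$. If $M_1$ has no degree-$1$ blocks at all, take $q = 0$ and $\alpha \in \F \setminus \{0\}$ arbitrary.

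Combining these yields $M \simeq 0_{r_0} \oplus N_0 \oplus \tilde{M}_1 \oplus \alpha\, I_{2q}$. I would then set $N = N_0 \oplus \tilde{M}_1$ and handle three subcases. If $\tilde{M}_1$ is empty, then $N = N_0$ is nilpotent and we set $r = r_0$; the inequality $2q \leq \dim M_1 \leq n \leq r_0$ is immediate. If both $\tilde{M}_1$ and $N_0$ are non-empty, then $N$ is well-partitioned: the polynomials of the first group come from $N_0$ (all of them powers of $t$ of degree at least $2$), those of the second group come from $\tilde{M}_1$ (all coprime to $t$, with the unique degree-$1$ block placed last if present), and coprimeness between the two groups is automatic; again $r = r_0 \geq n \geq 2q$. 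If $\tilde{M}_1$ is non-empty but $N_0$ is empty, I would absorb one copy of $C(t)$ from $0_{r_0}$ into $N$ as the first polynomial of the first group and set $r = r_0 - 1$; the inequality $r \geq 2q$ still holds because $\dim \tilde{M}_1 \geq 1$ forces $2q \leq n - 1$. The main obstacle is the repackaging step for $M_1$, which combines the matching fact with the fusion of coprime degree-$1$ blocks; everything else is bookkeeping to check the well-partitioned conditions and the inequality $r \geq 2q$ in each subcase.
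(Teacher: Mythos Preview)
Your argument is correct and reaches the conclusion, but via a different route from the paper's. You work with the primary (elementary-divisor) decomposition of the invertible part $M_1$, which may contain degree-$1$ blocks $C(t-\beta)$ for several distinct $\beta$'s; you then invoke a greedy matching to fuse distinct-type pairs into degree-$2$ companion blocks $C\bigl((t-\beta)(t-\gamma)\bigr)$, leaving an excess of a single type $\alpha$. The paper instead decomposes along the invariant factors $s_k$ of $M$, writing each as $s_k=p_k\,q_k$ with $p_k$ a power of $t$. Because the $q_k$'s form a divisibility chain $q_{k+1}\mid q_k$, any $q_k$ of degree $1$ is automatically the \emph{same} linear polynomial $t-\alpha$; hence a single $\alpha$ appears for free and no matching argument is needed. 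The paper's route is therefore slicker at this one step, while your approach trades that structural observation for a correct combinatorial pairing; the residual case splits (your three subcases versus the paper's two cases with parity subcases) are of comparable length.
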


\begin{proof}
Note that the result is obvious if $M$ is nilpotent (it suffices to take $N=M$, $\alpha=1$ and $q=r=0$ in that case).
Hence, in the remainder of the proof we assume that $M$ is non-nilpotent.

Denote by $s_1(t),\dots,s_m(t)$ the invariant factors of $M$, so that $s_{i+1}(t)$ divides $s_i(t)$ for all $i \in \lcro 1,m-1\rcro$.
For $k \in \lcro 1,m\rcro$, split $s_k(t)=p_k(t) q_k(t)$ where $p_k(t)$ is a monic power of $t$, and $t$ does not divide $q_k(t)$.
Note that $p_{i+1}(t)$ divides $p_i(t)$ for all $i \in \lcro 1,m-1\rcro$.
Since $M$ has at least $n$ Jordan cells of size $1$ for the eigenvalue $0$,
there are at least $n$ integers $k$ such that $p_k(t)=t$, and as $M$ has $2n$ columns it follows that
$\underset{k=1}{\overset{m}{\sum}} \deg q_k(t) \leq 2n-n=n$; in particular:
\begin{enumerate}[(i)]
\item  There are  at most $n$ integers $k$ such that $q_k(t)$ is non-constant;
\item One has $m \geq n$;
\item One has $\deg q_m(t) \leq 1$.
\end{enumerate}

Now, we denote by $a$ the least integer $k$ such that $\deg p_k(t)=1$, and we set $r:=m-a$.
Hence, $p_a(t)=\cdots=p_m(t)=t$ and $\deg p_k(t)>1$ for all $k \in \lcro 1,a-1\rcro$.
Note that $r+1$ is the number of Jordan cells of size $1$ of $M$ for the eigenvalue $0$, whence
$r+1 \geq n$.

From there, we split the discussion into two subcases.

\noindent \textbf{Case 1:} One has $\deg q_k(t) \neq 1$ for all $k \in \lcro 1,m\rcro$.
Since $M$ is not nilpotent, this yields an integer $b \in \lcro 1,m\rcro$ such that
$\deg q_b(t) \geq 2$ and $q_k(t)=1$ for all $k \in \lcro b+1,m\rcro$.
Then,
$$M \simeq \underbrace{C\bigl(p_a(t)\bigr) \oplus C\bigl(p_{a-1}(t)\bigr) \oplus \cdots \oplus C\bigl(p_1(t)\bigr) \oplus 
C\bigl(q_1(t)\bigr) \oplus \cdots \oplus C\bigl(q_b(t)\bigr)}_N \oplus 0_{r},$$
It is clear that $N$ is well-partitioned with associated polynomials $p_a(t),\dots,p_1(t),q_1(t),\dots,q_b(t)$,
and the result follows by taking $q:=0$ and $\alpha:=1_\F$.

\noindent \textbf{Case 2:} Assume now that $\deg q_k(t)=1$ for some $k \in \lcro 1,m\rcro$.
We denote respectively by $c$ and $d$ the least and greatest integer $k$ such that $\deg q_k(t)=1$. Then, for some $\alpha \in \F \setminus \{0\}$, we have $q_c(t)=\cdots=q_d(t)=t-\alpha$,
whereas $q_k(t)=1$ for all $k \in \lcro d+1,m\rcro$, and $\deg q_k(t) \geq 2$ for all $k \in \lcro 1,c-1\rcro$.
Hence,
$$M \simeq \underbrace{C\bigl(p_a(t)\bigr) \oplus C\bigl(p_{a-1}(t)\bigr) \oplus \cdots \oplus C\bigl(p_1(t)\bigr) \oplus 
C\bigl(q_1(t)\bigr) \oplus \cdots \oplus C\bigl(q_c(t)\bigr)}_N \oplus (\alpha I_{d-c})\oplus 0_{r}.$$
Again, $N$ is well-partitioned. If $d-c$ is even, the claimed result is then obtained
by noting that
$$d-c \leq -1+\sum_{i=1}^m \deg q_i \leq 2n-r-2 \leq r.$$

Assume now that $d-c$ is odd.
\begin{itemize}
\item If $c>1$ we note that
$$N':=C\bigl(p_a(t)\bigr) \oplus C\bigl(p_{a-1}(t)\bigr) \oplus \cdots \oplus C\bigl(p_1(t)\bigr) \oplus C\bigl(q_1(t)\bigr) \oplus \cdots \oplus C\bigl(q_{c-1}(t)\bigr)$$
is well-partitioned and
$$M \simeq N' \oplus (\alpha\, I_{d-c+1})\oplus 0_{r,}$$
and we conclude as in the preceding situation because now $d-c+1 \leq n-\deg q_1(t) \leq n-2 \leq r$.

\item If $c=1$, then $N':=C\bigl(p_{a-1}(t)\bigr) \oplus \cdots \oplus C\bigl(p_1(t)\bigr)$ is nilpotent,
$M \simeq N' \oplus (\alpha\, I_{d-c+1}) \oplus 0_{r+1}$, $d-c+1 \leq 2n-(r+1) \leq r+1$ and $d-c+1$ is even.
\end{itemize}
\end{proof}

Hence, we are reduced to proving the following result:

\begin{prop}\label{normalized3squarezero}
Let $N \in \Mat_p(\F)$ be a well-partitioned matrix (possibly void) and $q$ be a non-negative integer,
and assume that $M:=N \oplus I_{2q} \oplus 0_{2q}$ has trace zero.
Then, $M$ is the sum of three square-zero matrices.
\end{prop}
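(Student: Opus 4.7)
My plan is to subtract two well-chosen square-zero matrices from $M$ and argue that the remainder is a sum of two further square-zero matrices. Since $\charac(\F) \neq 2$, Corollary \ref{similaropposite} (via Theorem \ref{sumoftwotheo}) tells us that a matrix is a sum of two square-zero matrices precisely when all of its invariant factors are even or odd polynomials, and this is the property I will engineer for the remainder.

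The first subtraction comes from Lemma \ref{wellpartitioned-squarezero} applied to $N$: for any freely chosen monic $R \in \F[t]$ of degree $p$ with $\tr R = -2q$, it produces a square-zero $S_1 \in \Mat_p(\F)$ with $N - S_1 \simeq C(R)$. The second comes from Lemma \ref{proj+squarezero} applied to $I_{2q} \oplus 0_{2q}$, which is a rank-$2q$ idempotent in $\Mat_{4q}(\F)$: for any freely chosen monic $h \in \F[t]$ of degree $2q$, it produces a square-zero $S_2 \in \Mat_{4q}(\F)$ with $(I_{2q} \oplus 0_{2q}) - S_2 \simeq C(f)$, where $f(t) := h\bigl(t(t-1)\bigr)$. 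After both subtractions, $M$ equals $S_1 \oplus S_2$ plus a matrix similar to $C(R) \oplus C(f)$, and everything reduces to choosing $R$ and $h$ so that $C(R) \oplus C(f)$ is a sum of two square-zero matrices.

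I would take $h(s) := s^{2q}$, giving $f(t) = t^{2q}(t-1)^{2q}$, together with $R(t) := (t+1)^{2q}\,S(t)$ where $S$ is monic of degree $p - 2q$, chosen to be even when $p - 2q$ is even and odd when $p - 2q$ is odd, and with $S(1) \neq 0$. The trace condition $\tr R = -2q$ is automatic, since the subleading coefficient of such an $S$ vanishes (it sits on the wrong-parity slot) while the subleading coefficient of $(t+1)^{2q}$ equals $2q$. The two invariant factors of $C(R) \oplus C(f)$ are $\gcd(R, f)$ and $\operatorname{lcm}(R, f)$; with these choices, $\gcd(R, f)$ is either $1$ (when $S$ is even, arranging $S(0) \neq 0$) or $t$ (forced when $S$ is odd, since then $S(0) = 0$), both of which are even or odd, and $\operatorname{lcm}(R, f)$ is a scalar multiple of $(t^2-1)^{2q}\,t^{2q-\epsilon}\,S(t)$ where $\epsilon \in \{0,1\}$ matches the $\gcd$, which is even when $S$ is even and odd when $S$ is odd. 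Thus every invariant factor has the required parity, and we conclude.

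The main obstacle is that the above scheme requires $p \geq 2q$. The case $p < 2q$ — arising for instance when $N$ is void and $\charac(\F) \mid q$, so that $M = I_{2q} \oplus 0_{2q}$ already has trace zero — needs a separate treatment. A natural attempt is to exploit $I_{2q} \oplus 0_{2q} \simeq C\bigl(t(t-1)\bigr)^{\oplus 2q}$ and absorb enough of these degree-$2$ companion blocks into an enlarged well-partitioning of $M$ itself, so that Lemma \ref{wellpartitioned-squarezero} can be applied directly to the enlarged block with a target cyclic polynomial chosen even or odd (for example, $t^{\dim M}$). Maintaining the well-partitioned structure in the presence of the new eigenvalues $0$ and $1$, particularly when some $q_j$ of the original $N$ has $1$ as a root, is where I expect the real work to lie.
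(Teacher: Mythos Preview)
Your scheme is correct when $p \geq 2q$: with $f(t)=t^{2q}(t-1)^{2q}$ and $R(t)=(t+1)^{2q}S(t)$ as you describe, the two invariant factors $\gcd(R,f)$ and $\operatorname{lcm}(R,f)$ of $C(R)\oplus C(f)$ are indeed each even or odd, so Theorem~\ref{sumoftwotheo} applies. (Minor slip: in the odd-$S$ case the $\operatorname{lcm}$ comes out even, not odd---equally acceptable.)

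The restriction $p\geq 2q$, however, is a genuine gap and not a technicality. The proposition must cover, for instance, $N$ void and $q$ a positive multiple of the (odd) characteristic, giving $M=I_{2q}\oplus 0_{2q}$; here $R=1$ and you would need $f=h\bigl(t(t-1)\bigr)$ itself to be even or odd, which for $h(s)=s^{2q}$ it plainly is not (the root $1$ has multiplicity $2q$ while $-1$ is not a root at all). Non-void instances with $p<2q$ occur as well, e.g.\ $N=C(t)\oplus C(t-1)$ and $q=4$ over $\F_3$. Your suggested repair---absorbing copies of $C\bigl(t(t-1)\bigr)$ into a larger well-partitioning---is not carried out, and it faces a real obstacle: $t(t-1)$ has both $0$ and $1$ as roots, so such blocks cannot sit on one side of a coprime partition once either eigenvalue already appears among the $p_i$'s or the $q_j$'s of $N$.

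The paper avoids the difficulty by treating the idempotent block differently. Rather than one application of Lemma~\ref{proj+squarezero} to all of $I_{2q}\oplus 0_{2q}$, it splits this matrix into $q$ copies of $I_2\oplus 0_2$ and applies the lemma to each copy with a different parameter, obtaining after a single square-zero subtraction
\[
\bigoplus_{k=1}^{q} C\bigl((t-k)^2\bigr)\oplus C\bigl((t+k-1)^2\bigr),
\]
whose eigenvalues are $0,\pm 1,\dots,\pm(q-1),q$, each with multiplicity two. If $q\cdot 1_\F=0$ this is already similar to its opposite; otherwise only the single missing eigenvalue $-q$ needs to be supplied from the $N$-side, so $R(t)=t^{p-2}(t+q)^2$ suffices and the sole requirement becomes $p\geq 2$, which is automatic once $\tr N\neq 0$. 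Replacing your single-block reduction of the idempotent part by this block-by-block one removes the $p\geq 2q$ constraint entirely.
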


Indeed, let $A \in \Mat_n(\F)$ be a trace-zero matrix and set $M:=A\oplus 0_n$.
Then, by Lemma \ref{WPlemma}, we have
$$M\simeq N \oplus \alpha\,I_{2q} \oplus 0_r$$
for some non-negative integers $p,q,r$ with $2q \leq r$ and $p+2q+r=2n$, some non-zero scalar $\alpha$ and some matrix $N \in \Mat_p(\F)$ that is either nilpotent or well-partitioned.
Hence, $\alpha^{-1} M \simeq  \alpha^{-1}\,N \oplus I_{2q} \oplus 0_{2q} \oplus 0_{r-2q}$, and if we prove that $\alpha^{-1}M$
is the sum of three square-zero matrices then so is $M$ because any scalar multiple of a square-zero matrix has square zero.
Moreover, $\alpha^{-1}\,N$ is either nilpotent or similar to a well-partitioned matrix.
Finally, $0_{r-2q}$ is the sum of three square-zero matrices, and hence if Proposition \ref{normalized3squarezero} holds
then we will deduce that $M$ is the sum of three square-zero matrices.

Proposition \ref{normalized3squarezero} will be established thanks to the following series of lemmas.

\begin{lemma}
There is a square-zero matrix $S \in \Mat_{4q}(\F)$ such that
$$I_{2q}\oplus 0_{2q}-S\simeq \underset{k=1}{\overset{q}{\bigoplus}} \,C\bigl((t-k)^2\bigr) \oplus C\bigl((t+k-1)^2\bigr).$$
\end{lemma}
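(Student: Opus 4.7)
The plan is to decompose the rank-$2q$ idempotent $I_{2q}\oplus 0_{2q}$, by a permutation similarity, into $q$ block-diagonal copies of the rank-$2$ idempotent $I_2\oplus 0_2 \in \Mat_4(\F)$, and then treat each block independently using Lemma \ref{proj+squarezero}. The entire argument rests on the elementary identity
$$t(t-1)-k(k-1)=(t-k)(t+k-1),$$
which is immediate by expansion. Squaring and setting $p_k(t):=\bigl(t-k(k-1)\bigr)^2$ yields $p_k\bigl(t(t-1)\bigr)=(t-k)^2(t+k-1)^2$.

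Since $\charac(\F)\neq 2$ and $k\geq 1$, the scalars $k$ and $1-k$ are distinct, so the polynomials $(t-k)^2$ and $(t+k-1)^2$ are coprime. Hence $C\bigl((t-k)^2\bigr) \oplus C\bigl((t+k-1)^2\bigr)$ has minimal polynomial equal to its characteristic polynomial $(t-k)^2(t+k-1)^2$, and is therefore cyclic, so
$$C\bigl((t-k)^2(t+k-1)^2\bigr) \simeq C\bigl((t-k)^2\bigr) \oplus C\bigl((t+k-1)^2\bigr).$$

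Applying Lemma \ref{proj+squarezero} to the rank-$2$ idempotent $I_2\oplus 0_2 \in \Mat_4(\F)$ with the monic degree-$2$ polynomial $p_k$ then yields, for each $k \in \lcro 1,q\rcro$, a square-zero matrix $S_k \in \Mat_4(\F)$ such that
$$(I_2\oplus 0_2)-S_k \simeq C\bigl(p_k(t(t-1))\bigr) \simeq C\bigl((t-k)^2\bigr) \oplus C\bigl((t+k-1)^2\bigr).$$
I would then transport the block-diagonal square-zero matrix $S_1 \oplus \cdots \oplus S_q$ along the permutation similarity that identifies $\bigoplus_{k=1}^q (I_2\oplus 0_2)$ with $I_{2q}\oplus 0_{2q}$ to obtain the required $S$, the conclusion following by direct-summing the similarities across $k$ (Remark \ref{remarkoplus}). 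The only real content is the polynomial identity together with the coprimality check; the rest is bookkeeping, and I do not anticipate any genuine obstacle.
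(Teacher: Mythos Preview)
Your argument has a genuine gap: the claim that $k\cdot 1_\F$ and $(1-k)\cdot 1_\F$ are distinct because $\charac(\F)\neq 2$ is false. These two scalars coincide precisely when $(2k-1)\cdot 1_\F=0$, and for any odd prime characteristic $p$ this happens for some $k\in\lcro 1,q\rcro$ whenever $q$ is large enough (concretely, over $\F_3$ take $k=2$: then $k\cdot 1_\F=2=(1-k)\cdot 1_\F$). In that situation $(t-k)^2=(t+k-1)^2$, the coprimality fails, and
$$C\bigl((t-k)^2(t+k-1)^2\bigr)=C\bigl((t-k)^4\bigr)\not\simeq C\bigl((t-k)^2\bigr)\oplus C\bigl((t+k-1)^2\bigr),$$
the left-hand side being a single Jordan block of size $4$ and the right-hand side two blocks of size $2$. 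So your application of Lemma~\ref{proj+squarezero} with $n=2$ produces the wrong similarity class for that $k$, and the block $S_k$ you build does not do the job.

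The paper's proof follows your outline when $2k\cdot 1_\F\neq 1_\F$, but singles out the degenerate case $2k\cdot 1_\F=1_\F$ and handles it by applying Lemma~\ref{proj+squarezero} with $n=1$ instead: one obtains a square-zero $T_k\in\Mat_2(\F)$ with $(I_1\oplus 0_1)-T_k\simeq C\bigl(t(t-1)-k(k-1)\bigr)=C\bigl((t-k)^2\bigr)$, and then $T_k\oplus T_k$ (transported along the obvious permutation) serves as $S_k$. This is the missing idea; once you add this case split, your bookkeeping goes through unchanged.
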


\begin{proof}
Note that $I_{2q} \oplus 0_{2q}$ is the sum of $q$ copies of $I_2 \oplus 0_2$.
Let $k \in \lcro 1,q\rcro$.
We claim that there exists a square-zero matrix $S_k \in \Mat_4(\F)$
such that
$$(I_2 \oplus 0_2)-S_k \simeq C\bigl((t-k)^2\bigr)\oplus C\bigl((t+k-1)^2\bigr).$$
If $2k.1_\F \neq 1_\F$ then this directly follows from Lemma \ref{proj+squarezero}
since
$$C\bigl((t(t-1)-k(k-1))^2\bigr) \simeq C\bigl((t-k)^2\bigr) \oplus C\bigl((t+k-1)^2\bigr).$$
Assume now that $2k.1_\F=1_\F$. Then, we know from Lemma \ref{proj+squarezero} that there is a square-zero matrix $T_k$ such that
$$I_1\oplus 0_1 -T_k \simeq C\bigl((t(t-1)-k(k-1))\bigr) =C\bigl((t-k)^2\bigr).$$
Hence,
$$(I_1\oplus 0_1)\oplus (I_1\oplus 0_1) -T_k\oplus T_k \simeq C\bigl((t-k)^2\bigr)\oplus C\bigl((t+k-1)^2\bigr).$$
As $I_2 \oplus 0_2$ is similar to $(I_1\oplus 0_1)\oplus (I_1\oplus 0_1)$, we obtain the claimed result.

From there, we deduce that
$$\underset{k=1}{\overset{q}{\bigoplus}} (I_2 \oplus 0_2) -\underset{k=1}{\overset{q}{\bigoplus}}S_k \simeq
\underset{k=1}{\overset{q}{\bigoplus}}\,
C\bigl((t-k)^2\bigr) \oplus C\bigl((t+k-1)^2\bigr).$$
Since $I_{2q} \oplus 0_{2q}$ is similar to $\underset{k=1}{\overset{q}{\bigoplus}} (I_2 \oplus 0_2)$ whereas
$\underset{k=1}{\overset{q}{\bigoplus}}S_k$ has square zero, the existence of the claimed matrix $S$ follows.
\end{proof}

\begin{lemma}
If $q.1_\F=0_\F$ then $\underset{k=1}{\overset{q}{\bigoplus}} C\bigl((t-k)^2\bigr) \oplus C\bigl((t+k-1)^2\bigr)$
is the sum of two square-zero matrices.
\end{lemma}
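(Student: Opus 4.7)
The plan is to apply Corollary \ref{similaropposite}: since $\charac(\F) \neq 2$, it suffices to show that
\[
M \,:=\, \bigoplus_{k=1}^{q} \bigl(C((t-k)^2) \oplus C((t+k-1)^2)\bigr)
\]
is similar to $-M$.

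First I would record the elementary fact that $-C((t-a)^2) \simeq C((t+a)^2)$ for every $a \in \F$. Indeed, $-C((t-a)^2)$ has characteristic polynomial $(t+a)^2$ by a direct computation, and it is not a scalar matrix (since $C((t-a)^2)$ is not), so its minimal polynomial coincides with its characteristic polynomial and it is cyclic. Applying this blockwise yields
\[
-M \,\simeq\, \bigoplus_{k=1}^{q} \bigl(C((t+k)^2) \oplus C((t-k+1)^2)\bigr).
\]

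Next I would exploit the hypothesis $q \cdot 1_\F = 0_\F$, which means that $p := \charac(\F)$ divides $q$. Writing $q = p\ell$, the four integer families $\{1,2,\dots,q\}$, $\{0,-1,\dots,-(q-1)\}$, $\{-1,-2,\dots,-q\}$ and $\{0,1,\dots,q-1\}$ each cover every residue class modulo $p$ exactly $\ell$ times. Since $k \equiv a \pmod{p}$ forces the identity $C((t-k)^2) = C((t-a)^2)$ in $\Mat_2(\F)$, grouping the summands by eigenvalue in $\F$ shows that both $M$ and $-M$ are similar to the same block-diagonal matrix
\[
\bigoplus_{a=0}^{p-1} C((t-a)^2)^{\oplus 2\ell}.
\]
Hence $M \simeq -M$, and Corollary \ref{similaropposite} gives the desired decomposition into two square-zero matrices.

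There is no genuine obstacle here; the only point requiring some care is the residue-class bookkeeping, which relies on the fact that for an odd prime $p$ negation permutes $\{0,1,\dots,p-1\}$ modulo $p$, so that regrouping the companion blocks of $M$ and of $-M$ by their eigenvalue in $\F$ produces the very same multiset.
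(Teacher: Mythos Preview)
Your argument is correct and follows essentially the same route as the paper: both show $M \simeq -M$ and invoke Corollary~\ref{similaropposite}. The only difference is in execution---the paper replaces your residue-class bookkeeping with a single index shift, noting that $\bigoplus_{k=1}^{q} C\bigl((t+k-1)^2\bigr) = \bigoplus_{k=0}^{q-1} C\bigl((t+k)^2\bigr) = \bigoplus_{k=1}^{q} C\bigl((t+k)^2\bigr)$ because $(t+q)^2=t^2$ over $\F$, after which the pairing $C\bigl((t-k)^2\bigr)\oplus C\bigl((t+k)^2\bigr)$ is visibly similar to its opposite.
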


\begin{proof}
Assume that $q.1_\F=0_\F$. Then,
$$\underset{k=1}{\overset{q}{\bigoplus}} C\bigl((t+k-1)^2\bigr)
=\underset{k=0}{\overset{q-1}{\bigoplus}} C\bigl((t+k)^2\bigr)
=\underset{k=1}{\overset{q}{\bigoplus}} C\bigl((t+k)^2\bigr)$$
since the polynomials $(t+q)^2$ and $t^2$ are equal over $\F$.
Hence,
$$\underset{k=1}{\overset{q}{\bigoplus}} C\bigl((t-k)^2\bigr) \oplus C\bigl((t+k-1)^2\bigr)
\simeq \underset{k=1}{\overset{q}{\bigoplus}} C\bigl((t-k)^2\bigr) \oplus C\bigl((t+k)^2\bigr),$$
 and the matrix on the right-hand side is obviously similar to its opposite. The conclusion then follows from Corollary \ref{similaropposite}.
\end{proof}

From there, we can finally prove Proposition \ref{normalized3squarezero}.

\begin{proof}[Proof of Proposition \ref{normalized3squarezero}]
The proof is split into two cases.

\noindent \textbf{Case 1: $\tr N=0$.}

Then, $0=\tr M=\tr N+2q.1_\F=2q.1_\F$. Since $\charac(\F) \neq 2$ we find $q.1_\F=0_\F$.
The above two lemmas then show that $I_{2q}\oplus 0_{2q}$ is the sum of three square-zero matrices.
\begin{itemize}
\item If $N$ is nilpotent then it is the sum of two square-zero matrices.
\item If $N$ is well-partitioned, then
Lemma \ref{wellpartitioned-squarezero} yields a square-zero matrix $S \in \Mat_p(\F)$ such that
$N-S \simeq C(t^p)$ (we convene that $C(t^p)=0$ if $p=0$), and hence
$N-S$ is the sum of two square-zero matrices.
\end{itemize}
In any case, $N$ is the sum of three square-zero matrices.
Hence, so is $M=N \oplus (I_{2q}\oplus 0_{2q})$.

\vskip 2mm
\noindent \textbf{Case 2: $\tr(N) \neq 0$.}

In particular $p \geq 2$, $\tr N=-2q.1_\F$ and $q.1_\F \neq 0$.
Hence, by Lemma \ref{wellpartitioned-squarezero} we can find a square-zero matrix $S \in \Mat_p(\F)$ such that
$N-S \simeq C(t^{p-2}(t+q)^2) \simeq C(t^{p-2}) \oplus C((t+q)^2)$.
On the other hand, we have a square-zero matrix $S'$ such that
$$(I_{2q}\oplus 0_{2q})-S' \simeq \underset{k=1}{\overset{q}{\bigoplus}} C\bigl((t-k)^2\bigr) \oplus C\bigl((t+k-1)^2\bigr).$$
Hence, $S_0:=S \oplus S'$ has square zero and
$$M-S_0 \simeq C\bigl(t^{p-2}\bigr) \oplus C\bigl((t+q)^2\bigr) \oplus \underset{k=1}{\overset{q}{\bigoplus}} C\bigl((t-k)^2\bigr) \oplus C\bigl((t+k-1)^2\bigr).$$
Yet,
$$C\bigl((t+q)^2\bigr) \oplus \underset{k=1}{\overset{q}{\bigoplus}} C\bigl((t-k)^2\bigr) \oplus C\bigl((t+k-1)^2\bigr)
\simeq C(t^2) \oplus \underset{k=1}{\overset{q}{\bigoplus}} C\bigl((t-k)^2\bigr) \oplus C\bigl((t+k)^2\bigr),$$
whence
$$M-S_0 \simeq C(t^{p-2}) \oplus C(t^2) \oplus \underset{k=1}{\overset{q}{\bigoplus}} C\bigl((t-k)^2\bigr) \oplus C\bigl((t+k)^2\bigr),$$
and once more Corollary \ref{similaropposite} yields that $M-S_0$ is the sum of two square-zero matrices.
Hence, $M$ is the sum of three square-zero matrices.
\end{proof}

Thus, the proof of Theorem \ref{allfieldssquarezero} is now complete.

\section{Sums of three square-zero matrices over a field of characteristic $2$}\label{SectionSquareZeroCar2}

This section consists of a proof of Theorem \ref{carac2squarezero}.

Throughout this section, we assume that the field $\F$ has characteristic $2$.
The basic strategy is similar to the one of the preceding section, with a few different details.
First of all, we shall need the following basic lemma.

\begin{lemma}\label{2by2car2}
Let $\alpha \in \F$. Then, $\alpha\,I_2$ is the sum of three square-zero matrices.
\end{lemma}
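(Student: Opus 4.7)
The plan is to give an explicit decomposition, since the $2\times 2$ case leaves little room for abstract manipulation. The key observation is that in characteristic $2$, a $2\times 2$ matrix $M$ satisfies $M^2 = 0$ if and only if $\tr(M) = 0$ and $\det(M) = 0$: these two conditions force the characteristic polynomial of $M$ to be $t^2$, and Cayley-Hamilton then gives $M^2 = 0$.

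Motivated by this, I would use the ansatz $A_1 := \begin{bmatrix} 0 & 1 \\ 0 & 0 \end{bmatrix}$ and $A_2 := \begin{bmatrix} 0 & 0 \\ x & 0 \end{bmatrix}$ with $x \in \F$ a free parameter; both are obviously square-zero. Setting $A_3 := \alpha I_2 - A_1 - A_2 = \alpha I_2 + A_1 + A_2$ (using $-1 = 1$ in $\F$), one has
$$A_3 = \begin{bmatrix} \alpha & 1 \\ x & \alpha \end{bmatrix},$$
whose trace is $2\alpha = 0$ automatically. The only remaining condition is $\det(A_3) = \alpha^2 - x = 0$, which fixes $x := \alpha^2$. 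A direct verification then confirms $A_1^2 = A_2^2 = A_3^2 = 0$ and $A_1 + A_2 + A_3 = \alpha I_2$, uniformly in $\alpha$ (no case split needed, and $\alpha = 0$ is not special).

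An alternative route, more in the spirit of the rest of this section, is to invoke Theorem \ref{sumoftwotheo}: take $S := \begin{bmatrix} 0 & 1 \\ 0 & 0 \end{bmatrix}$, which has square zero; then $\alpha I_2 - S = \begin{bmatrix} \alpha & 1 \\ 0 & \alpha \end{bmatrix}$ is cyclic with sole invariant factor $(t-\alpha)^2 = t^2 + \alpha^2$, a polynomial in $t^2$ and hence even, so $\alpha I_2 - S$ splits as the sum of two square-zero matrices, and $\alpha I_2$ is thus the sum of three. There is no genuine obstacle here; this lemma is a toy base case whose role is to handle the scalar-block components $\alpha I_{2q}$ that will appear when normalizing a general trace-zero matrix in the characteristic $2$ analogue of Lemma \ref{WPlemma}.
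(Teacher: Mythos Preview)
Your proposal is correct. In fact, your ``alternative route'' is precisely the paper's proof: subtract the square-zero matrix $\begin{bmatrix} 0 & 1 \\ 0 & 0 \end{bmatrix}$, observe that the result is similar to $C\bigl((t-\alpha)^2\bigr)=C(t^2-\alpha^2)$, and invoke Theorem~\ref{sumoftwotheo}.

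Your primary argument, the explicit decomposition $A_1+A_2+A_3$ with $x=\alpha^2$, is a genuinely different (and more elementary) proof: it produces all three square-zero summands by hand and avoids appealing to the classification of sums of two square-zero matrices. The paper's approach, on the other hand, is more in keeping with the global strategy used throughout Section~\ref{SectionSquareZeroCar2} (peel off one square-zero matrix, then cite Theorem~\ref{sumoftwotheo}), so it integrates more smoothly with the surrounding narrative. Either proof is perfectly adequate for this toy base case.
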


\begin{proof}
Indeed,
$$\alpha \,I_2-\begin{bmatrix}
0 & 1 \\
0 & 0
\end{bmatrix}=\begin{bmatrix}
\alpha & 1 \\
0 & \alpha
\end{bmatrix}\simeq C\bigl((t-\alpha)^2\bigr) = C(t^2-\alpha^2),$$
whereas by Theorem \ref{sumoftwotheo} the matrix $C(t^2-\alpha^2)$ is the sum of two square-zero ones.
\end{proof}

The next step consists of the following lemma, which actually holds over any field.

\begin{lemma}\label{wellpartlemma2}
Let $A \in \Mat_n(\F)$. Assume that $A$ has at most one Jordan cell of size $1$ for each one of its eigenvalues
in $\F$ and that its minimal polynomial is not a power of some irreducible polynomial. Then, $A$ is similar to a well-partitioned matrix.
\end{lemma}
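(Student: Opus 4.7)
The plan is to exploit the primary decomposition of $A$. Since the minimal polynomial of $A$ is not a power of a single irreducible, it has at least two distinct monic irreducible factors; partition the set of these factors into two non-empty disjoint subsets $\Pi_P$ and $\Pi_Q$. The primary decomposition then yields $A \simeq A_P \oplus A_Q$, where each of $A_P, A_Q$ is similar to a direct sum of companion matrices of the form $C(\pi^e)$ with $\pi$ in $\Pi_P$ (respectively $\Pi_Q$); in particular, the characteristic polynomials of $A_P$ and $A_Q$ are coprime, and irreducibles appearing in the elementary divisors of $A_P$ never appear in those of $A_Q$.

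Next, I will rearrange each of $A_P$ and $A_Q$ into the shape of the left half (respectively the right half) of a well-partitioned matrix. The only delicate point is the degree constraint: at most $p_1$ may have degree $1$, and at most $q_s$ may have degree $1$. By hypothesis, the degree-$1$ elementary divisors of $A$ form a finite list $t-\mu_1, \dots, t-\mu_d$ with pairwise distinct scalars $\mu_i \in \F$. Let $d_P$ denote the number of these whose underlying irreducible lies in $\Pi_P$. If $d_P \leq 1$, I list the elementary divisors of $A_P$ as $p_1, \dots, p_r$, putting the unique degree-$1$ divisor (if any) first as $p_1$; all other $p_i$'s automatically have degree $\geq 2$. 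If $d_P \geq 2$, I pair up the degree-$1$ divisors as $(t-\mu_{i_1})(t-\mu_{i_2})$, $(t-\mu_{i_3})(t-\mu_{i_4})$, and so on; since each pair involves distinct linear factors, the similarity $C(t-\mu)\oplus C(t-\nu) \simeq C\bigl((t-\mu)(t-\nu)\bigr)$ lets me merge each pair into a single degree-$2$ companion block, while any singleton leftover (when $d_P$ is odd) is placed as $p_1$. The exact same procedure is applied to the $Q$-side, placing any leftover degree-$1$ block at the end as $q_s$.

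This produces a matrix of the shape $C(p_1) \oplus \cdots \oplus C(p_r) \oplus C(q_1) \oplus \cdots \oplus C(q_s)$ that is similar to $A$, with $\deg p_i \geq 2$ for $i \geq 2$ and $\deg q_j \geq 2$ for $j \leq s-1$. The coprimality between every $p_i$ and every $q_j$ holds automatically, since the irreducible factors of the $p_i$'s lie in $\Pi_P$ while those of the $q_j$'s lie in the disjoint set $\Pi_Q$; and both $r$ and $s$ are positive because $\Pi_P$ and $\Pi_Q$ are both non-empty. The main obstacle is controlling the degree-$1$ blocks, and the hypothesis that at most one size-$1$ Jordan cell occurs per eigenvalue in $\F$ is exactly what guarantees that the degree-$1$ divisors come from pairwise distinct linear irreducibles, which in turn is what enables the pairing strategy.
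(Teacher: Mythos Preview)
Your argument is correct, but it takes a different route from the paper's proof.

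The paper works with the invariant factors $a_1(t),\dots,a_r(t)$ of $A$ (with $a_{i+1}\mid a_i$), chooses a single irreducible factor $p(t)$ of the minimal polynomial, and splits each $a_i=b_i c_i$ with $b_i$ a power of $p$ and $c_i$ coprime to $p$. Setting $u:=\max\{i : b_i\neq 1\}$ and $v:=\max\{i : c_i\neq 1\}$, the divisibility chain forces $\deg b_i\geq 2$ for $i<u$ and $\deg c_i\geq 2$ for $i<v$: indeed, if some $c_i$ with $i<v$ had degree $1$, then $c_i=\cdots=c_v=t-\mu$, producing at least two size-$1$ Jordan cells for $\mu$. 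Thus $C(b_u)\oplus\cdots\oplus C(b_1)\oplus C(c_1)\oplus\cdots\oplus C(c_v)$ is already well-partitioned, with no merging step required.

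By contrast, you work at the level of elementary divisors and allow an arbitrary bipartition $\Pi_P\sqcup\Pi_Q$ of the irreducible factors; this forces you to introduce the pairing trick $C(t-\mu)\oplus C(t-\nu)\simeq C\bigl((t-\mu)(t-\nu)\bigr)$ to absorb any surplus of degree-$1$ blocks on each side. The hypothesis on size-$1$ cells is used differently in the two proofs: for the paper it bounds the length of a constant tail in the divisibility chain, whereas for you it ensures the degree-$1$ divisors involve pairwise distinct linear factors so that pairing is available. Your approach is slightly more laborious but has the minor advantage of flexibility in how the irreducibles are split between the two halves; the paper's approach is shorter because the invariant-factor divisibility chain does the ordering work automatically.
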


\begin{proof}
Let us choose an irreducible monic factor $p(t)$ of the minimal polynomial of $A$.
Let us denote by $a_1(t),\dots,a_r(t)$ the invariant factors of $A$,
so that $a_{i+1}(t)$ divides $a_i(t)$ for all $i \in \lcro 1,r-1\rcro$.
For all $i \in \lcro 1,r\rcro$, split $a_i(t)=b_i(t)c_i(t)$ where $b_i(t)$ is a power of $p(t)$
and $c_i(t)$ is coprime with $p(t)$. Set finally $u:=\max \{i \in \lcro 1,r\rcro : b_i(t) \neq 1\}$ and
$v:=\max \{i \in \lcro 1,r\rcro : c_i(t) \neq 1\}$. If $p(t)$ has degree $1$ then we know that $A$ has at most one Jordan
cell of size $1$ for the root of $p(t)$. Hence, $\deg b_i(t) \geq 2$ for all $i \in \lcro 1,u-1\rcro$.
Likewise $\deg c_i(t) \geq 2$ for all $i \in \lcro 1,v-1\rcro$.
Hence,
$$A':=C\bigl(b_u(t)\bigr) \oplus C\bigl(b_{u-1}(t)\bigr) \oplus \cdots \oplus C\bigl(b_1(t)\bigr)\oplus C\bigl(c_1(t)\bigr)\oplus \cdots \oplus C\bigl(c_{v-1}(t)\bigr) \oplus C\bigl(c_v(t)\bigr)$$
is well-partitioned, and $A \simeq A'$.
\end{proof}

\begin{lemma}\label{carac2irr}
Let $M\in \Mat_n(\F)$ be a trace-zero matrix whose minimal polynomial is a power of some
irreducible monic polynomial. Then, $M$ is the sum of three square-zero matrices.
\end{lemma}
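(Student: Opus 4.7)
The plan is to exploit the rational canonical form. Since the minimal polynomial of $M$ is a power of a single monic irreducible $p(t)$, Remark \ref{remarksim} allows us to assume $M = C(p^{k_1}) \oplus \cdots \oplus C(p^{k_r})$ with $k_1 \geq \cdots \geq k_r \geq 1$. If $p(t)=t$ then $M$ is nilpotent and Theorem \ref{sumoftwotheo} gives $M$ as a sum of two square-zero matrices. Hence assume $p \neq t$; set $d = \deg p$ and let $a$ denote the coefficient of $t^{d-1}$ in $p$, so that $\tr C(p^{k_i}) = k_i\,a$ and $\tr M = a\sum_i k_i$ in $\F$. Over $\overline{\F}$, each $C(p^{k_i})$ decomposes into $d$ Jordan cells of size $k_i$ at non-zero eigenvalues; by the characteristic-$2$ corollary of Theorem \ref{sumoftwotheo}, if every $k_i$ is even then $M$ is already a sum of two square-zero matrices. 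Otherwise the strategy is to subtract a single well-chosen square-zero matrix $S$ so that $M-S$ has only even-sized Jordan cells at non-zero eigenvalues.

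When $a = 0$, every block $C(p^{k_i})$ already has trace zero. We exploit the standard observation that for any two monic polynomials $q, q'$ of equal degree and equal trace, the difference $C(q)-C(q')$ is supported in the last column with a zero bottom-right entry, and is therefore square-zero. Setting $S_i := C(p^{k_i}) - C(t^{k_i d})$ thus produces a square-zero block, and $S := \bigoplus_i S_i$ satisfies $M - S = \bigoplus_i C(t^{k_i d})$, which is nilpotent and hence a sum of two square-zero matrices.

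When $a \neq 0$, the trace constraint $a\sum k_i = 0$ forces $\sum k_i$ to be even, so the number of odd exponents is even; we pair them up arbitrarily. For each pair $(i,j)$ with $k_i, k_j$ both odd, let $\tilde S_{ij}$ be the matrix that vanishes everywhere in $M$ except at the cross-block $(I_j, I_i)$ where it equals $-H_{k_j d,\, k_i d}$ (here $I_\ell$ denotes the row/column range of the $\ell$-th block). Then $\tilde S_{ij}^2 = 0$ because its support lies in a single cross-block between disjoint index ranges, and the supports of distinct pairs are pairwise disjoint, so $S := \sum \tilde S_{ij}$ is again square-zero. For each pair, the $(I_i \cup I_j)$-sub-matrix of $M - S$ equals
$$\begin{bmatrix} C(p^{k_i}) & 0 \\ H_{k_j d,\, k_i d} & C(p^{k_j}) \end{bmatrix},$$
which is good cyclic (all its subdiagonal entries equal $1$) with characteristic polynomial $p^{k_i+k_j}$, and is therefore similar to $C(p^{k_i+k_j})$. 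Up to a block-permutation similarity, $M - S$ is thus a direct sum of matrices of the form $C(p^{k_i+k_j})$ (paired blocks, with $k_i+k_j$ even) and $C(p^{k_l})$ (unpaired blocks with $k_l$ even), all having only even-sized Jordan cells at their non-zero eigenvalues, and so $M - S$ is a sum of two square-zero matrices by the characteristic-$2$ corollary of Theorem \ref{sumoftwotheo}.

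In either case, $M = S + (M-S)$ writes $M$ as a sum of three square-zero matrices. The main conceptual hurdle will be the pairing argument when $a \neq 0$: one has to notice that the trace constraint automatically guarantees an even number of odd exponents, and that a single $H$-type perturbation in the cross-block suffices to fuse two cyclic blocks $C(p^{k_i})$ and $C(p^{k_j})$ into the larger cyclic block $C(p^{k_i+k_j})$ of even Jordan type—bypassing the coprimality requirement that underpins Lemma \ref{wellpartitioned-squarezero}.
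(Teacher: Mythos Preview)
Your proof is correct and follows essentially the same approach as the paper's: the case $a=0$ (paper's Case~1, $\tr p(t)=0$) is handled by subtracting from each $C(p^{k_i})$ a last-column matrix with zero bottom-right entry to produce a nilpotent companion, and the case $a\neq 0$ (paper's Case~2) is handled by pairing blocks with odd exponents and fusing each pair via an $H$-type subdiagonal perturbation into a single good cyclic block $C(p^{k_i+k_j})$ with even exponent. The only cosmetic difference is that you assemble all the cross-block perturbations into a single square-zero $S$ and appeal to a block-permutation similarity, whereas the paper treats each pair separately; the content is the same.
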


\begin{proof}
We write the minimal polynomial of $M$ as $p(t)^r$, where $p(t)$ is an irreducible monic polynomial, and
$r$ is a positive integer.
If $p(t)=t$, then $M$ is nilpotent and we already know that it is the sum of two square-zero matrices.
In the rest of the proof, we assume that $p(t) \neq t$.
We distinguish between two cases.

\noindent \textbf{Case 1: $\tr(p(t))=0$.} \\
Denote by $d$ the degree of $p(t)$.
Let $k$ be a positive integer. Then, $\tr(p(t)^k)=k\tr p(t)=0$.
The last column of $C(p(t)^k)$ then reads $\begin{bmatrix}
Y \\
0
\end{bmatrix}$ for some $Y \in \F^{kd-1}$. Setting $S:=
\begin{bmatrix}
0_{(kd-1) \times (kd-1)} & Y \\
0_{1 \times (kd-1)} & 0
\end{bmatrix}$, we see that $C(p(t)^k)-S$ is the transpose of a Jordan cell for the eigenvalue $0$.
Hence, $C(p(t)^k)-S$ is the sum of two square-zero matrices, and it follows that $C(p(t)^k)$ is the sum of three such matrices.
Using the rational canonical form, we deduce that $M$ is the sum of three square-zero matrices.

\vskip 3mm
\noindent \textbf{Case 2: $\tr(p(t))\neq 0$.} \\
As $\tr M=0$ and $\F$ has characteristic $2$, evenly many invariant factors of $M$ are odd powers of $p(t)$.
Hence, we have a splitting
$$M \simeq A_1 \oplus \cdots \oplus A_r \oplus B_1 \oplus \cdots \oplus B_s,$$
where each $A_i$ equals $C(p^k)$ for some even positive integer $k$, and each $B_j$ equals $C(p^k)\oplus C(p^l)$
for some pair $(k,l)$ of odd positive integers.
However, for every even integer $k>0$, we see that $p^k$ is an even polynomial (since $\F$ has characteristic $2$)
and hence $C(p^k)$ is the sum of two square-zero matrices.
To conclude, we take an arbitrary pair $(k,l)$ of odd positive integers and we prove that $C(p^k)\oplus C(p^l)$
is the sum of three square-zero matrices.
The matrix $S:=\begin{bmatrix}
0_{kd} & 0_{kd \times ld} \\
F_{ld,kd} & 0_{ld}
\end{bmatrix}$ of $\Mat_{(k+l)d}(\F)$ has square-zero, and one sees that $\bigl(C(p^k)\oplus C(p^l)\bigr)-S$
is a good cyclic matrix with characteristic polynomial $p^{k+l}$.
Since $k+l$ is even and $\F$ has characteristic $2$, the polynomial $p^{k+l}$ is an even one,
whence $\bigl(C(p^k)\oplus C(p^l)\bigr)-S$ is the sum of two square-zero matrices. This completes the proof.
\end{proof}

From there, the proof of Theorem \ref{carac2squarezero} can be completed swiftly.

\begin{proof}[Proof of Theorem \ref{carac2squarezero}]
Let $M$ be a trace-zero matrix of $\Mat_n(\F)$.
By pairing Jordan cells of size $1$ of $M$ associated to the same eigenvalue in $\F$,
we find a non-negative integer $p$, scalars $\alpha_1,\dots,\alpha_q$ (with $p+2q=n$) and a matrix $N \in \Mat_p(\F)$ such that
$$M \simeq N \oplus \alpha_1\,I_2\oplus \cdots \oplus \alpha_q\,I_2,$$
and $N$ has at most one Jordan cell of size $1$ for each one of its eigenvalues in $\F$.
Since $\F$ has characteristic $2$ we find $\tr N=\tr M=0$.
By Lemma \ref{2by2car2}, the conclusion will follow should we prove that $N$ is the sum of three square-zero matrices.
If the minimal polynomial of $N$ has a sole irreducible monic divisor, then this follows directly from Lemma \ref{carac2irr}.
Assume otherwise. We get from Lemma \ref{wellpartlemma2} that $N$ is similar to a trace-zero well-partitioned matrix $N'$.
By Lemma \ref{wellpartitioned-squarezero}, there is a square-zero matrix $S \in \Mat_p(\F)$ such that $N'-S \simeq C(t^p)$, so that
$N'-S$ is the sum of two square-zero matrices. Hence, $N'$ is the sum of three square-zero matrices,
and we conclude that so is $N$. Hence, $M$ is the sum of three square-zero matrices.
\end{proof}

\section{On sums of three idempotent matrices over a field of characteristic $2$}\label{SectionIdempotentCar2}

Throughout this section, we assume that $\F$ has characteristic $2$.
Our aim is to prove Theorem \ref{carac2idempotents}.
To start with, we recall the following result from \cite{dSPidem2} (Theorem 5 in that article):

\begin{theo}\label{sumof2idemcar2}
A matrix $A$ of $\Mat_n(\F)$ is the sum of two idempotent matrices if and only if,
for every eigenvalue $\lambda$ of $A$ in $\overline{\F} \setminus \{0,1\}$, all the Jordan
cells attached to $\lambda$ are even-sized.
\end{theo}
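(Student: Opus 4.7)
The plan is to prove the two implications separately, exploiting a key quirk of characteristic $2$: if $P$ is idempotent then so is $I+P$, so that the set of sums of two idempotent matrices is stable under $A \mapsto A+I$. Combined with Remarks \ref{remarksim} and \ref{remarkoplus}, this lets us treat the generalized eigenspaces associated with $0$ and $1$ on the same footing, and decouple them from the others.

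For sufficiency, I would reduce via the rational canonical form to the case of a single cyclic block $C(p^k)$ with $p$ monic irreducible. Three sub-cases arise: (i) $p(t)=t$, so $C(t^k)=J_k(0)$ is nilpotent; one writes this explicitly as a sum of two idempotents by an ad hoc construction on the canonical basis (for instance, for $J_2(0)$ one takes $P=\bigl(\begin{smallmatrix}0&1\\0&1\end{smallmatrix}\bigr)$ and $Q=\bigl(\begin{smallmatrix}0&0\\0&1\end{smallmatrix}\bigr)$), the general $J_k(0)$ being handled by a similar recipe. (ii) $p(t)=t+1$: use the symmetry $A\mapsto A+I$ above to reduce to (i). (iii) $p\notin\{t,t+1\}$: the hypothesis forces $k=2m$ to be even, and one builds an explicit decomposition of $C(p^{2m})$ using a $2\times 2$ block pattern — adding to a well-chosen rank-$md$ idempotent yields a cyclic matrix whose minimal polynomial one controls, and the remainder is then shown to be idempotent via an analogue of Lemma \ref{proj+squarezero}. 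The three sub-cases cover all eigenvalue types allowed by the hypothesis.

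For necessity, work over $\overline{\F}$ and fix $\lambda\in\overline{\F}\setminus\{0,1\}$. Given $A=P+Q$, decompose $V=\im P\oplus\ker P$ and write
\[
P=\begin{bmatrix}I_r & 0\\ 0 & 0\end{bmatrix},\qquad Q=\begin{bmatrix}Q_{11}&Q_{12}\\ Q_{21}&Q_{22}\end{bmatrix},\qquad A-\lambda I=\begin{bmatrix}(1+\lambda)I+Q_{11}&Q_{12}\\ Q_{21}&Q_{22}+\lambda I\end{bmatrix}.
\]
Since $\lambda\notin\{0,1\}$, both scalar shifts $Q_{11}+(1+\lambda)I$ and $Q_{22}+\lambda I$ are invertible (as $Q_{11}$ and $Q_{22}$ have spectrum in $\{0,1\}$ on their own "quadrants" via the relations $Q_{11}^2+Q_{12}Q_{21}=Q_{11}$ and $Q_{22}^2+Q_{21}Q_{12}=Q_{22}$). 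A Schur-complement manipulation, together with the block relations extracted from $Q^2=Q$, then lets one express $(A-\lambda I)^j$ restricted to the generalized $\lambda$-eigenspace in a form exhibiting a pairing between the two "halves" of the decomposition. Comparing ranks across consecutive powers yields the Weyr characteristic of $A$ at $\lambda$ in pairs, forcing all Jordan cells at $\lambda$ to be even-sized.

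The main obstacle will be the necessity direction. In characteristic not $2$ the standard route is to set $R=2P-I$, $S=2Q-I$, reducing to sums of two involutions, whose similarity class is governed by a classical invariant. This trick is unavailable here, so the parity statement must be obtained directly from the block identities coming from $Q^2=Q$, and the combinatorics of the rank jumps $\rk(A-\lambda I)^j-\rk(A-\lambda I)^{j+1}$ has to be controlled carefully to show that consecutive jumps coincide in pairs.
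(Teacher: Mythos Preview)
The paper does not contain a proof of this statement: it is \emph{recalled} from \cite{dSPidem2} (Theorem~5 there) and used as a black box in Section~\ref{SectionIdempotentCar2}. So there is nothing to compare against here. I can only assess your sketch on its own merits.

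\medskip

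Your sufficiency outline is reasonable, and the observation that $P\mapsto I+P$ preserves idempotency in characteristic~$2$ (hence the class of sums of two idempotents is stable under $A\mapsto A+I$) is exactly the right symmetry to exploit. Case~(i) works; case~(ii) reduces to it as you say. Case~(iii) is too vague as written: ``an analogue of Lemma~\ref{proj+squarezero}'' is not a proof, and you should either exhibit the decomposition or point to a precise statement. (One clean route in characteristic~$2$: since $p(t)^{2m}=p^m(t^2)$ by Frobenius, Lemma~\ref{companionsquare} gives $C(p^{2m})\simeq\bigl(\begin{smallmatrix}0 & C(p^m)\\ I & 0\end{smallmatrix}\bigr)$, and from this block form one can write down two idempotents explicitly.)

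\medskip

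The necessity direction has a genuine gap. You decompose with respect to $\im P\oplus\ker P$ and then assert that $Q_{11}+(1+\lambda)I$ and $Q_{22}+\lambda I$ are invertible ``as $Q_{11}$ and $Q_{22}$ have spectrum in $\{0,1\}$.'' But the relation $Q_{11}^2+Q_{12}Q_{21}=Q_{11}$ does \emph{not} force $\operatorname{Sp}(Q_{11})\subset\{0,1\}$ unless $Q_{12}Q_{21}=0$, which you have no reason to expect. More fundamentally, you have not shown that $P$ (and hence the block decomposition) respects the generalized $\lambda$-eigenspace, so it is unclear on which space your Schur-complement computation is meant to take place.

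The missing idea is this: from $(A-P)^2=A-P$ one gets $AP+PA=A^2-A$; multiplying by $A$ on each side shows $PA^2=A^2P$. So $P$ commutes with $A^2$, and since Frobenius is injective in characteristic~$2$ the generalized eigenspaces of $A$ coincide with those of $A^2$; hence $P$ and $Q$ \emph{do} stabilise each $V_\lambda$. Restricting to $V_\lambda$ with $\lambda\notin\{0,1\}$, the relation $AP+PA=A^2-A$ has invertible right-hand side (its only eigenvalue is $\lambda^2-\lambda\neq 0$), and in your block form $AP+PA=\bigl(\begin{smallmatrix}0 & A_{12}\\ A_{21} & 0\end{smallmatrix}\bigr)$, forcing $A_{12},A_{21}$ to be square and invertible. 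The remaining idempotency relations then give $A_{11}A_{12}=A_{12}A_{22}$, so after conjugation $A|_{V_\lambda}$ takes the shape $\bigl(\begin{smallmatrix}0 & M\\ I & 0\end{smallmatrix}\bigr)$ for some $M$, and Corollary~\ref{elemfactcor} (or its argument) yields even invariant factors, hence even Jordan cells. Your outline was heading in this direction but skipped the commutation step that makes the restriction legitimate and replaced it with an incorrect spectral claim.
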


\begin{lemma}\label{3by33idemlemma}
Let $\alpha \in \F \setminus \{0_\F,1_\F\}$. Then, $\alpha\,I_2\oplus 0_1$ is the sum of three idempotents.
\end{lemma}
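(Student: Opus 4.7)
The plan is to produce a rank-one idempotent $P_1 \in \Mat_3(\F)$ with the property that $M := \alpha I_2 \oplus 0_1$ satisfies $M - P_1 \simeq C\bigl((t-\alpha)^2\bigr) \oplus C(t-1)$. Once this is done, Theorem \ref{sumof2idemcar2} applies to $M - P_1$: its only eigenvalue outside $\{0_\F, 1_\F\}$ is $\alpha$, attached to a single Jordan cell of even size $2$, whereas the remaining cell at the eigenvalue $1$ is subject to no restriction. This will yield idempotents $P_2$ and $P_3$ with $M - P_1 = P_2 + P_3$, and hence $M = P_1 + P_2 + P_3$.

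The target polynomial $(t-\alpha)^2(t-1)$ has trace $2\alpha + 1 = 1$ in characteristic $2$, which matches the trace $\tr M - \tr P_1 = -1 = 1$ of $M$ minus a rank-one idempotent. To exhibit $P_1$ explicitly, I would take $v := (0, 1, 1)^T$ and $w := (1, 0, 1)^T$, for which $w^T v = 1$, and set $P_1 := v w^T$. A direct computation then yields
$$M - P_1 = \begin{pmatrix} \alpha & 0 & 0 \\ 1 & \alpha & 1 \\ 1 & 0 & 1 \end{pmatrix}.$$
Expanding $\det(tI_3 - (M - P_1))$ along the first row gives the characteristic polynomial $(t - \alpha)^2 (t - 1)$; moreover the matrix $M - P_1 - \alpha I_3$ has rank $2$, since its two nonzero rows $(1, 0, 1)$ and $(1, 0, 1+\alpha)$ differ by $(0, 0, \alpha) \neq 0$. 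The $\alpha$-eigenspace therefore has dimension $1$, and the Jordan form of $M - P_1$ is exactly $C\bigl((t-\alpha)^2\bigr) \oplus C(t-1)$, as required.

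The main obstacle is to discover the right $P_1$. A useful guide is the rank-one perturbation formula $\chi_{M - v w^T}(t) = \chi_M(t) + w^T \operatorname{adj}(tI - M) v$: it shows that $\alpha$ is automatically a root of $\chi_{M - v w^T}$ for every rank-one $P_1 = vw^T$, and that the two conditions $v_3 w_3 = 1$ and $v_1 w_1 + v_2 w_2 = 0$ force it to be a double root whose companion is $1$. The choice above is a particularly clean one, and the non-vanishing of the differences of rows that appeared in paragraph two is what ultimately distinguishes a genuine $J_2(\alpha)$ from a split pair of $1 \times 1$ cells.
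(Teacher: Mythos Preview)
Your proof is correct and follows essentially the same approach as the paper: subtract a rank-one idempotent so that the difference is similar to $C\bigl((t-\alpha)^2\bigr)\oplus C(t-1)$, then invoke Theorem~\ref{sumof2idemcar2}. The only cosmetic difference is that the paper first conjugates $M$ to $N=\begin{bmatrix}\alpha&0&0\\1&0&0\\0&0&\alpha\end{bmatrix}$ so that $N-P$ is a good cyclic matrix and the similarity $N-P\simeq C\bigl((t-\alpha)^2(t-1)\bigr)$ is immediate, whereas you work directly with $M$ and verify the Jordan structure by a rank computation.
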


\begin{proof}
Note that $M:=\alpha\,I_2\oplus 0_1$ is similar to $N:=\begin{bmatrix}
\alpha & 0 & 0 \\
1 & 0 & 0 \\
0 & 0 & \alpha
\end{bmatrix}$.
Set $P:=\begin{bmatrix}
0 & 0 & 0 \\
0 & 1 & 0 \\
0 & 1 & 0
\end{bmatrix}$ and note that $P$ is idempotent and
$N-P=\begin{bmatrix}
\alpha & 0 & 0 \\
1 & 1 & 0 \\
0 & 1 & \alpha
\end{bmatrix} \simeq C\bigl((t-\alpha)^2(t-1)\bigr) \simeq C(t-1) \oplus C\bigl((t-\alpha\bigr)^2\bigr)$.
Then, by Theorem \ref{sumof2idemcar2}, $N-P$ is the sum of two idempotents, whence $N$ is the sum of three idempotents.
We conclude that $M$ is the sum of three idempotents.
\end{proof}

\begin{lemma}\label{3idemwellpart}
Let $A \in \Mat_n(\F)$ be a well-partitioned matrix with $\tr A \in \{0_\F,1_\F\}$.
Then, $A$ is the sum of three idempotent matrices.
\end{lemma}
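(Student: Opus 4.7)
The plan is to find an idempotent $P \in \Mat_n(\F)$ such that $A - P$ is the sum of two idempotent matrices, and thereby exhibit $A$ itself as the sum of three idempotents. The engine for producing such a $P$ is Lemma \ref{wellpartitioned-idempotent}, and the criterion for recognizing sums of two idempotents in characteristic $2$ is Theorem \ref{sumof2idemcar2}.

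Denote by $m$ the number of polynomials appearing in the well-partitioning of $A$ (so $m \geq 2$ by the very definition). Applying Lemma \ref{wellpartitioned-idempotent} with $\lambda := 1_\F$ yields, for every monic polynomial $R$ of degree $n$ satisfying $\tr(R) = \tr(A) - (m-1) \cdot 1_\F$, an idempotent matrix $P \in \Mat_n(\F)$ with $A - P \simeq C(R)$. The guiding idea is then to choose $R$ so that $C(R)$ has no eigenvalue outside $\{0_\F, 1_\F\}$ in $\overline{\F}$, since for such an $R$ the hypothesis of Theorem \ref{sumof2idemcar2} is vacuously satisfied and $C(R)$ is automatically the sum of two idempotents.

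The natural choice is $R(t) := t^{n-b}(t-1)^b$ for a suitable $b \in \{0, 1\}$. One computes $\tr(R) = b \cdot 1_\F$, and since $\tr(A) \in \{0_\F, 1_\F\}$, $(m-1) \cdot 1_\F \in \{0_\F, 1_\F\}$, and $\charac(\F) = 2$, the target value $\tr(A) - (m-1) \cdot 1_\F$ lies in $\{0_\F, 1_\F\}$ as well. Hence some $b \in \{0, 1\}$ is admissible, and the constraint $b \leq n$ is harmless because a well-partitioned matrix always has $n \geq 2$. Combining everything, $A = P + (A - P)$ decomposes into three idempotents.

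I do not anticipate a genuine obstacle: once the ansatz $R = t^{n-b}(t-1)^b$ is in mind, the remainder is trace bookkeeping which runs smoothly thanks to the characteristic $2$ hypothesis and the hypothesis $\tr(A) \in \{0_\F, 1_\F\}$. The only point requiring a moment's thought is that the definition of well-partitioned already guarantees $m \geq 2$, which is what makes the shift by $(m-1) \cdot 1_\F$ absorbable by the two admissible values of $b$.
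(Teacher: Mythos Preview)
Your proof is correct and is essentially identical to the paper's: both apply Lemma~\ref{wellpartitioned-idempotent} with $\lambda=1_\F$ and choose $R(t)=t^{n-1}(t-\mu)$ with $\mu:=\tr(A)-(m-1)\cdot 1_\F\in\{0_\F,1_\F\}$ (your $b$ is the paper's $\mu$), then invoke Theorem~\ref{sumof2idemcar2}. One cosmetic remark: your closing comment singles out $m\geq 2$ as the key to absorbing the shift, but in characteristic~$2$ one has $(m-1)\cdot 1_\F\in\{0_\F,1_\F\}$ for every $m$, so the shift is absorbable regardless; the fact that $m\geq 2$ is true but not actually needed here.
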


\begin{proof}
Denote by $m$ the number of polynomials associated with $A$.
Set $\lambda:=\tr A-(m-1).1_\F \in \{0_\F,1_\F\}$.
By Lemma \ref{wellpartitioned-idempotent}, we can find an idempotent $P \in \Mat_n(\F)$
such that $A-P \simeq C\bigl(t^{n-1}(t-\lambda)\bigr)$. Since $\lambda \in \{0_\F,1_\F\}$, Theorem
\ref{sumof2idemcar2} yields that $A-P$ is the sum of two idempotent matrices.
\end{proof}

\begin{lemma}\label{3idempowerirr}
Let $A \in \Mat_n(\F)$, and assume that the minimal polynomial of $A$ is a power of some monic irreducible polynomial $p(t)$.
Assume also that $\tr(A) \in \{0_\F,1_\F\}$. Assume finally that if $p(t)=t-\lambda$, then $A$
has at most one Jordan cell of size $1$ for the eigenvalue $\lambda$.
Then, $A$ is the sum of three idempotents.
\end{lemma}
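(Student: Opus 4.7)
The plan is to split off the degenerate cases and then to exhibit a single idempotent $P\in\Mat_n(\F)$ such that $A-P$ is cyclic with characteristic polynomial either $t^n$ or $t^{n-1}(t-1)$, the choice being arranged so that $\tr(A-P)$ matches. Once this is achieved, every eigenvalue of $A-P$ in $\overline{\F}$ lies in $\{0,1\}$, so Theorem~\ref{sumof2idemcar2} writes $A-P$ as a sum of two idempotents, and $A=P+(A-P)$ will be a sum of three idempotents.

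First I would dispose of the case $p(t)\in\{t,\,t-1\}$: then every eigenvalue of $A$ in $\overline{\F}$ already lies in $\{0,1\}$, so Theorem~\ref{sumof2idemcar2} writes $A$ as a sum of two idempotents directly. Henceforth I assume $p(t)\neq t,t-1$, and I fix a rational canonical form $A\simeq C(p^{k_1})\oplus\cdots\oplus C(p^{k_r})$ with $k_1\geq\cdots\geq k_r$; the hypothesis ensures $\deg p^{k_i}\geq 2$ for every $i\geq 2$ (automatic when $\deg p\geq 2$, and a consequence of the ``at most one Jordan cell of size $1$'' restriction when $p$ is linear).

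In the cyclic case $r=1$ the construction is entirely explicit. Writing $A=C(p^k)$ with last column $(a_0,\ldots,a_{n-1})^T$, I take
\[
u := \begin{cases} (a_0,a_1,\ldots,a_{n-1})^T & \text{if } \tr A = 1,\\ (a_0,a_1,\ldots,a_{n-2},1)^T & \text{if } \tr A = 0, \end{cases}
\]
and set $P:=u\,e_n^T$. Since $u_n=1$ in both cases, $P^{2}=u(e_n^{T}u)e_n^{T}=P$ is a rank-one idempotent, and a direct calculation shows that $A-P$ has every subdiagonal entry equal to $1$ and zeros elsewhere, except possibly for the $(n,n)$-entry which equals $0$ when $\tr A=1$ and $1$ when $\tr A=0$. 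Expanding the determinant along the last column gives $\chi_{A-P}=t^n$ or $\chi_{A-P}=t^{n-1}(t-1)$, respectively, and both characteristic polynomials split with eigenvalues only in $\{0,1\}$.

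In the non-cyclic case $r\geq 2$ I would assemble $P$ by superposing the last-column trick above with a ``linking'' piece patterned on the block sub-diagonal matrix used in the proof of Lemma~\ref{wellpartitioned-squarezero}. The linking welds the cyclic blocks $C(p^{k_i})$ into one good cyclic matrix, and the condition $\deg p^{k_i}\geq 2$ for $i\geq 2$ is precisely what allows this welding to be realised by an idempotent rather than by a square-zero matrix. After bookkeeping, $A-P$ is again good cyclic with characteristic polynomial $t^n$ or $t^{n-1}(t-1)$, and Theorem~\ref{sumof2idemcar2} finishes the argument.

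The hard part will be this non-cyclic case. In the parallel well-partitioned setting (Lemmas~\ref{wellpartitioned-squarezero} and~\ref{wellpartitioned-idempotent}), the two groups of associated polynomials contribute coprime characteristic polynomials, which lets a Roth-type step absorb an arbitrary top-right block and so secures full flexibility in the target characteristic polynomial. Here all invariant factors of $A$ are powers of the same irreducible $p$, so this coprimality, and with it the Roth step, is unavailable: the linking idempotent must instead be constructed directly, in a way that simultaneously performs the rank-one column tweak forcing $\chi_{A-P}\in\{t^n,\,t^{n-1}(t-1)\}$.
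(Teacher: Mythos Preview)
Your cyclic case $r=1$ is correct and pleasantly explicit. There is a small slip in the ordering: with $k_1\ge\cdots\ge k_r$ the unique possible size-$1$ block (when $\deg p=1$) sits at $i=r$, not at $i=1$, so the assertion ``$\deg p^{k_i}\ge 2$ for every $i\ge 2$'' is stated for the wrong end. This is harmless once you reverse the ordering.

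The substantive gap is in the non-cyclic case. You flag it as ``the hard part'' and note that the Roth step is unavailable, but your sketch does not confront the fact that the linking-plus-last-column construction cannot reach the target you have fixed. Concretely: if $P$ is block lower-triangular with diagonal blocks $u_i e_{n_i}^T$ (idempotency forces $(u_i)_{n_i}=1$) and subdiagonal links $-H_{n_{i+1},n_i}$, then $A-P$ is block lower-triangular good cyclic, so $\chi_{A-P}=\prod_i q_i$ where each $q_i$ is a companion polynomial whose trace is \emph{pinned} at $\tr q_i=k_i\tr p-1$. To get $\prod_i q_i=t^n$ you would need every $\tr q_i=0$; to get $t^{n-1}(t-1)$ you would need all but one to vanish. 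Neither happens once $\tr p\notin\{0,1\}$: then $\tr q_i=\tr p+1\notin\{0,1\}$ whenever $k_i$ is odd, and $\tr q_i=1$ whenever $k_i$ is even. Without coprimality you cannot fix this by adjusting upper-right blocks while keeping $P$ idempotent, which is exactly the Roth obstruction you identified---so your plan, as written, stalls.

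The paper's proof abandons both of your organising principles: it does not look for a single global idempotent, and it does not insist that $A-P$ have spectrum in $\{0,1\}$. It first peels off every block $C(p^{2m})$ (already a sum of two idempotents by Theorem~\ref{sumof2idemcar2}) and then splits on $\tr p$. When $\tr p\in\{0,1\}$ each remaining $C(p^k)$ is handled individually, essentially by your $r=1$ trick. When $\tr p\notin\{0,1\}$ the surviving odd-power blocks (of which there are evenly many, by the trace hypothesis) are \emph{paired}; for each pair $C(p^k)\oplus C(p^l)$ the paper exhibits an idempotent $S$ such that the difference is good cyclic with characteristic polynomial $(p+t^{d-1})^{k+l}$ (if $d\ge 2$) or $(t-\lambda)^{k+l-2}(t-\lambda+1)^2$ (if $d=1$). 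These targets still have eigenvalues outside $\{0,1\}$, but with even-sized Jordan cells, so Theorem~\ref{sumof2idemcar2} applies. Your approach could be rescued along similar lines---relax the target from $\{t^n,\,t^{n-1}(t-1)\}$ to cyclic matrices whose bad eigenvalues carry even cells---but that relaxation, and the bookkeeping it entails, is precisely the missing idea.
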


\begin{proof}
For every even integer $r$, we know from Theorem \ref{sumof2idemcar2} that $C(p(t)^r)$ is the sum of two idempotent matrices, and its trace is obviously zero.
Hence, we lose no generality in assuming that the invariant factors of $A$ are odd powers of $p(t)$.
We split the discussion into two cases.

\noindent \textbf{Case 1: $\tr p(t) \in \{0_\F,1_\F\}$.} \\
Let $k \geq 1$ be a positive integer. Denote by $d$ the degree of $p(t)$.
Then, $\tr p(t)^k=k \tr p(t) \in \{0_\F,1_\F\}$.
Let $Y \in \F^{kd-1}$, and set $P:=\begin{bmatrix}
0_{(kd-1) \times (kd-1)} & Y \\
0_{1 \times (kd-1)} & 1
\end{bmatrix}$, which is an idempotent matrix of $\Mat_{kd}(\F)$.
Then, $C(p^k)-P$ has trace $k \tr(p(t))-1 \in \{0_\F,1_\F\}$, and it is actually a companion matrix.
Obviously, for any monic polynomial $q$ with degree $kd$ and trace $k \tr\bigl(p(t)\bigr)-1$
the vector $Y$ can be chosen so as to have $C(p^k)-P=C(q)$.
Choosing $q=(t-1) t^{kd-1}$ if $k \tr(p(t))-1=1$, and $q=t^{kd}$ if $k \tr\bigl(p(t)\bigr)-1=0$,
we know from Theorem \ref{sumof2idemcar2} that $C(q)$ is the sum of two idempotents,
and it follows that $C(p^k)$ is the sum of three idempotents.
Using the rational canonical form, we conclude that $M$ is the sum of three idempotents.

\vskip 3mm
\noindent \textbf{Case 2: $\tr p(t)\not\in \{0_\F,1_\F\}$.} \\
As $\tr A \in \{0_\F,1_\F\}$, there are evenly many invariant factors of $A$ (remember that all of them are odd powers of $p(t)$).
To conclude, it suffices to take an arbitrary pair $(k,l)$ of odd positive integers, with $k \leq l$, and to prove that $B:=C\bigl(p(t)^k\bigr)\oplus C\bigl(p(t)^l\bigr)$ is the sum of three idempotent matrices,
unless $k=l=1$ and $p=t-\lambda$ for some $\lambda \in \F \setminus \{0_\F,1_\F\}$.
Assume indeed that we do not simultaneously have $k=l=1$ and  $p=t-\lambda$ for some $\lambda \in \F \setminus \{0_\F,1_\F\}$.

Denote by $d$ the degree of $p(t)$. For a positive integer $i$, denote by $D_i$ the diagonal matrix of $\Mat_i(\F)$
with all diagonal entries zero except the last one, which equals $1$. 
Assume first that $d \geq 2$.
Set $S_1:=D_d \oplus \cdots \oplus D_d$ (with $k+l$ copies of $D_d$) and
$$S_2:=\begin{bmatrix}
0_{kd} & 0_{kd \times ld} \\
F_{ld,kd} & 0_{ld}
\end{bmatrix}.$$
Since $d \geq 2$, one sees that $S_1+S_2$ is idempotent. On the other hand, one sees that
$B-S_1-S_2$ is a good cyclic matrix with characteristic polynomial $\bigl(p(t)+t^{d-1}\bigr)^{k+l}$.
Since $k+l$ is even, we deduce from Theorem \ref{sumof2idemcar2} that $B-(S_1+S_2)$ is the sum of two idempotent matrices.

Assume now that $d=1$, so that $p(t)=t-\lambda$ for some $\lambda \in \F$.
Then, $S:=\begin{bmatrix}
D_k & 0_{k \times l} \\
F_{l,k} & D_l
\end{bmatrix}$ is idempotent because $l>1$.
Moreover, one sees that $B-S$ is a good cyclic matrix with characteristic polynomial $(t-\lambda)^{k+l-2}(t-\lambda+1)^2$,
and again by Theorem \ref{sumof2idemcar2} it is the sum of two idempotent matrices.

In any case, we conclude that $C\bigl(p(t)^k\bigr)\oplus C\bigl(p(t)^l\bigr)$ is the sum of three idempotent matrices, which completes the proof.
\end{proof}

\begin{cor}\label{atmost1cell3idem}
Let $A \in \Mat_n(\F)$, and assume that $A$ has at most one Jordan cell of size $1$ for each one if its eigenvalues in $\F$.
Assume also that $\tr A \in \{0_\F,1_\F\}$.
Then, $A$ is the sum of three idempotent matrices.
\end{cor}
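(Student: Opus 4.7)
The plan is to reduce Corollary \ref{atmost1cell3idem} to the two main building blocks already proved in this section, namely Lemma \ref{3idempowerirr} (for matrices whose minimal polynomial is a power of an irreducible) and Lemma \ref{3idemwellpart} (for well-partitioned matrices). Since similarity preserves being a sum of three idempotent matrices (Remark \ref{remarksim}), we may freely replace $A$ with a similar matrix. The dichotomy will be governed by the number of distinct irreducible monic divisors of the minimal polynomial of $A$.

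First, assume that the minimal polynomial of $A$ is a power of a single irreducible monic polynomial $p(t)$. The hypothesis that $A$ has at most one Jordan cell of size $1$ for each of its eigenvalues in $\F$ means in particular that, if $p(t)=t-\lambda$, then $A$ has at most one Jordan cell of size $1$ for $\lambda$. Together with $\tr A \in \{0_\F,1_\F\}$, this is precisely the hypothesis of Lemma \ref{3idempowerirr}, which immediately yields that $A$ is a sum of three idempotent matrices.

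Next, assume that the minimal polynomial of $A$ admits at least two distinct irreducible monic divisors; in particular it is not a power of any single irreducible polynomial. Combined with the Jordan cell assumption, this is exactly the hypothesis of Lemma \ref{wellpartlemma2}, so $A$ is similar to some well-partitioned matrix $A'$. Since trace is invariant under similarity, $\tr A' = \tr A \in \{0_\F,1_\F\}$, and Lemma \ref{3idemwellpart} then gives that $A'$ is a sum of three idempotent matrices. By Remark \ref{remarksim}, so is $A$.

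The main conceptual point is simply the choice of the correct case split; once made, each case is closed by invoking one of the preceding lemmas, and there is no further computation to carry out. No step in the plan looks delicate: the only thing to verify carefully is that the ``at most one Jordan cell of size $1$ for each eigenvalue in $\F$'' hypothesis of the corollary transfers correctly into the hypotheses of both Lemma \ref{3idempowerirr} and Lemma \ref{wellpartlemma2}, which it does by inspection.
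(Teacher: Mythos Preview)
Your proof is correct and follows exactly the same approach as the paper's own proof: split according to whether the minimal polynomial is a power of a single irreducible (invoke Lemma \ref{3idempowerirr}) or not (invoke Lemma \ref{wellpartlemma2} then Lemma \ref{3idemwellpart}). The only difference is that you include more explanatory commentary on why the hypotheses transfer, which the paper leaves implicit.
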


\begin{proof}
If the minimal polynomial of $A$ is not a power of some irreducible monic polynomial, then
Lemma \ref{wellpartlemma2} shows that $A$ is similar to a well-partitioned matrix, and by Lemma \ref{3idemwellpart}
$A$ is the sum of three idempotent matrices.

Otherwise, Lemma \ref{3idempowerirr} shows that $A$ is the sum of three idempotent matrices.
\end{proof}

Now, we are ready to complete the proof of Theorem \ref{carac2idempotents}.

Let $A \in \Mat_n(\F)$ be with $\tr A \in \{0_\F,1_\F\}$.
By pairing Jordan cells of size $1$ associated to the same eigenvalue in $\F$, we find a decomposition
$$A \simeq N \oplus \alpha_1\,I_2 \oplus \cdots \oplus \alpha_q\,I_2$$
in which $\alpha_1,\dots,\alpha_q$ are scalars, and $N \in \Mat_{n-2q}(\F)$ has at most one Jordan cell of size $1$
for each one of its eigenvalues in $\F$. Note that $\tr N=\tr A \in \{0_\F,1_\F\}$.
By Corollary \ref{atmost1cell3idem}, $N$ is the sum of three idempotent matrices.
On the other hand, for all $k \in \lcro 1,q\rcro$, we know from Lemma \ref{3by33idemlemma} that
$\alpha_k\,I_2 \oplus 0_1$ is the sum of three idempotent matrices. Hence,
$A \oplus 0_q$ is the sum of three idempotent matrices, QED.

\section{A note on linear combinations of idempotent matrices}\label{SectionIdempotentLC}

We recall the following terminology from \cite{dSPidempotentLC}:

\begin{Def}
Let $\alpha_1,\dots,\alpha_k$ be scalars. A matrix $M \in \Mat_n(\F)$ is called an
$(\alpha_1,\dots,\alpha_k)$-composite when there are idempotents $P_1,\dots,P_k$ of $\Mat_n(\F)$ such that
$M=\underset{i=1}{\overset{k}{\sum}} \alpha_i\,P_i$.
\end{Def}

Our aim here is to prove the following result:

\begin{theo}\label{idemLCtheo}
Let $\F$ be an arbitrary field and
let $\alpha \in \F \setminus \{0\}$. Then, for all $A \in \Mat_n(\F)$, there exist scalars $\beta,\gamma$
such that $A \oplus \alpha I_n$ is an $(\alpha,\beta,\gamma)$-composite.
\end{theo}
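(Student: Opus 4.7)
The plan is to adapt the techniques from Section \ref{SectionIdempotentCar2}, combining Lemma \ref{WPlemma} for a structural reduction with Lemma \ref{wellpartitioned-idempotent} for the key idempotent extraction. First I would shift by writing $A \oplus \alpha I_n = \alpha I_{2n} + (A - \alpha I_n) \oplus 0_n$. Since the matrix $(A - \alpha I_n) \oplus 0_n$ has at least $n$ Jordan cells of size $1$ for the eigenvalue $0$ (coming from the trailing $0_n$ block), Lemma \ref{WPlemma} yields a similarity $(A - \alpha I_n) \oplus 0_n \simeq N \oplus \alpha' I_{2q} \oplus 0_r$ with $r \geq 2q$, $\alpha' \in \F \setminus \{0\}$, and $N \in \Mat_p(\F)$ either nilpotent or well-partitioned (where $p + 2q + r = 2n$). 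Shifting back produces the normal form $A \oplus \alpha I_n \simeq (\alpha I_p + N) \oplus (\alpha + \alpha') I_{2q} \oplus \alpha I_r$.

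I would then set $\beta := \alpha'$ and decompose each summand as an $(\alpha, \beta, \gamma)$-composite with a common $\gamma$ to be determined. The blocks $\alpha I_r$ and $(\alpha + \alpha') I_{2q}$ are immediate: the former equals $\alpha \cdot I_r + \beta \cdot 0 + \gamma \cdot 0$ and the latter $\alpha \cdot I_{2q} + \beta \cdot I_{2q} + \gamma \cdot 0$, for any $\gamma$. For the main block $\alpha I_p + N$, the key observation is that a translation by $\alpha I$ of a well-partitioned matrix is similar to one (each companion block $C(p_i)$ becomes similar to $C(p_i(t-\alpha))$, and the cross-coprimeness condition is preserved by translation). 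Applying Lemma \ref{wellpartitioned-idempotent} with $\lambda = \alpha$ then furnishes an idempotent $P$ together with a monic polynomial $R$ of degree $p$—whose trace is prescribed by the lemma but whose remaining coefficients are at our disposal—such that $\alpha I_p + N - \alpha P \simeq C(R)$.

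The crux of the argument is to express the resulting cyclic matrix $C(R)$ as a $(\beta, \gamma)$-composite for a suitably chosen $\gamma$. Since a $(\beta, \gamma)$-composite has a minimal polynomial in which each irreducible factor appears with multiplicity at most two (via the simultaneous decomposition of any pair of projections into invariant blocks of dimension $\leq 2$), one would choose $R$ to factor as a product of pairwise coprime polynomials, each of degree at most $2$ and with roots in $\{0, \beta, \gamma, \beta + \gamma\}$ for a well-chosen $\gamma$, subject to the prescribed trace. Then $C(R)$ decomposes as a direct sum of small cyclic pieces each easily expressible as a $(\beta, \gamma)$-composite, and reassembly with the earlier blocks produces the desired decomposition of $A \oplus \alpha I_n$. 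The main obstacle lies precisely in ensuring such a factorization of $R$ exists: over small base fields this requires careful use of irreducible quadratic factors, and when $N$ has Jordan blocks of size $\geq 3$ an auxiliary reduction in the spirit of Lemma \ref{proj+squarezero} may be needed to absorb the extra Jordan structure before the final $(\beta, \gamma)$-decomposition can be carried out.
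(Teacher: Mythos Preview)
Your reduction via Lemma~\ref{WPlemma} and the shift by $\alpha I_{2n}$ is sound, and the observation that a translate of a well-partitioned matrix is similar to a well-partitioned matrix is correct. However, the proposal is not a complete proof: the step you yourself flag as ``the crux'' and ``the main obstacle'' --- namely, that with $\beta=\alpha'$ fixed by Lemma~\ref{WPlemma} and with the trace of $R$ fixed by Lemma~\ref{wellpartitioned-idempotent}, one can choose the remaining coefficients of $R$ and a scalar $\gamma$ so that $C(R)$ is a $(\beta,\gamma)$-composite --- is left unproved. Your heuristic (simultaneous block-diagonalisation of two idempotents into blocks of size $\leq 2$) is only valid over algebraically closed fields and in any case does not yield the claim that irreducible factors of the minimal polynomial of a $(\beta,\gamma)$-composite occur with multiplicity at most two; already for $\beta=1$, $\gamma=-1$ one gets arbitrary nilpotent matrices. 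Even granting the heuristic, carrying out the factorisation of $R$ under a fixed trace constraint and a fixed $\beta$ over an arbitrary field is nontrivial, and your last sentence about ``absorbing extra Jordan structure'' does not apply once you already have the cyclic matrix $C(R)$. You also do not treat the case where Lemma~\ref{WPlemma} returns a nilpotent $N$, in which $\alpha I_p+N$ is \emph{not} similar to a well-partitioned matrix and Lemma~\ref{wellpartitioned-idempotent} cannot be invoked.

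By contrast, the paper works ``backwards''. It first applies a different structural reduction (Lemma~\ref{idemLCwellpartlemma}) directly to $A\oplus\alpha I_n$, producing a \emph{very} well-partitioned block $N$ alongside two scalar blocks $\alpha I_q\oplus\beta I_r$; the triangularisable-with-sole-eigenvalue-$\alpha$ case is disposed of separately and trivially. It then invokes Lemma~\ref{compositelemma} to \emph{first} manufacture a target polynomial $u(t)$ and a scalar $\alpha'$ such that $C(u(t))\oplus\alpha I_q\oplus\beta I_r$ is already an $(\alpha,\alpha')$-composite, with $\tr u(t)\neq\tr N$. Only then does it use Lemma~\ref{verywellpartitioned-idempotent} --- which, unlike Lemma~\ref{wellpartitioned-idempotent}, leaves the scalar $\lambda$ free and hence imposes no trace constraint on the target --- to subtract $\delta P$ from $N$ and reach $C(u(t))$. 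This order of operations is what makes the argument close: the ``composite'' lemma supplies both the target and the second scalar, and the very-well-partitioned lemma provides the third. Your route fixes too many parameters too early (both $\beta$ and $\tr R$), leaving a genuine existence problem at the end that is essentially equivalent to Lemma~\ref{compositelemma} itself.
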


The motivation for proving Theorem \ref{idemLCtheo} is the following corollary,
which will be derived in the next section:

\begin{cor}\label{idemLCCor}
Let $V$ be an infinite-dimensional vector space over $\F$.
Let $u$ be a finite-rank endomorphism of $V$, and let $\alpha \in \F$. Then, $f:=\alpha\,\id_V+u$
is a linear combination of three idempotent endomorphisms of $V$.
\end{cor}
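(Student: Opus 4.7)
The plan is to reduce to the finite-dimensional setting of Theorem \ref{idemLCtheo} by exploiting the finite-rank hypothesis on $u$. First I would establish a decomposition $V = W \oplus W'$ where $W$ is finite-dimensional, $u(V) \subseteq W$, and $W' \subseteq \Ker u$. This is done by taking a finite-dimensional complement $L$ of $\Ker u$ in $V$, setting $W := L + \im u$ (still finite-dimensional), and choosing $W'$ to be a complement of $W \cap \Ker u$ inside $\Ker u$; a routine verification yields $V = W \oplus W'$ together with $W' \subseteq \Ker u$ and $u(V)\subseteq W$. With respect to this splitting, $u$ has block form $u|_W \oplus 0$, hence $f = \alpha\,\id_V + u$ has block form $(\alpha\,\id_W + u|_W) \oplus \alpha\,\id_{W'}$.

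Because $V$ is infinite-dimensional while $W$ has finite dimension $n$, the subspace $W'$ is infinite-dimensional, so I can further split it as $W' = W'_1 \oplus W'_2$ with $\dim W'_1 = n$. Fixing bases of $W$ and $W'_1$, the restriction $f|_{W \oplus W'_1}$ corresponds to a matrix of the form $B \oplus \alpha I_n \in \Mat_{2n}(\F)$, where $B \in \Mat_n(\F)$ is the matrix of $\alpha\,\id_W + u|_W$.

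Assume first that $\alpha \neq 0$. By Theorem \ref{idemLCtheo} applied to $B$, there exist scalars $\beta, \gamma$ and idempotent matrices $P_1, P_2, P_3 \in \Mat_{2n}(\F)$ such that $B \oplus \alpha I_n = \alpha P_1 + \beta P_2 + \gamma P_3$; transporting this back through the chosen bases yields idempotent endomorphisms $p_1,p_2,p_3$ of $W \oplus W'_1$ with $f|_{W \oplus W'_1} = \alpha p_1 + \beta p_2 + \gamma p_3$. On $W'_2$, $f$ acts as $\alpha\,\id_{W'_2} = \alpha\,\id_{W'_2} + \beta\cdot 0 + \gamma\cdot 0$. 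Setting $q_1 := \id_{W'_2}$ and $q_2 := q_3 := 0$, and gluing across $V = (W \oplus W'_1) \oplus W'_2$, the three endomorphisms $p_i \oplus q_i$ of $V$ are idempotent and their $(\alpha,\beta,\gamma)$-linear combination equals $f$.

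For the remaining case $\alpha = 0$, apply the previous case with coefficient $1$ to $g := \id_V + u$ (whose finite-rank perturbation is $u$): this yields idempotent endomorphisms $P,Q,R$ of $V$ and scalars $\beta,\gamma$ with $g = P + \beta Q + \gamma R$. Then $f = u = g - \id_V = -(\id_V - P) + \beta Q + \gamma R$, and since $\id_V - P$ is idempotent whenever $P$ is, this exhibits $u$ as a linear combination of three idempotents. The only delicate point in the whole argument is setting up the initial block decomposition so that $f$ is block-diagonal of precisely the shape required to invoke Theorem \ref{idemLCtheo}; everything else is a formal transfer between matrices and endomorphisms and a small trick to handle $\alpha = 0$.
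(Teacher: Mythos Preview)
Your proof is correct and, for the case $\alpha\neq 0$, follows exactly the paper's argument: the same block decomposition $V=W\oplus W'\oplus W_0$ (your $W'_1,W'_2$ play the roles of the paper's $W',W_0$), the same invocation of Theorem~\ref{idemLCtheo} on $B\oplus\alpha I_n$, and the same extension of the three idempotents across $W_0$ by $\id$, $0$, $0$.

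The only genuine difference is your treatment of $\alpha=0$. The paper dispatches this case by citing Theorem~1 of \cite{dSPidempotentLC} (every matrix is a linear combination of three idempotents) and reducing to finite dimension as before. You instead bootstrap from the already-proved case $\alpha=1$: write $\id_V+u=P+\beta Q+\gamma R$ with $P,Q,R$ idempotent, then observe $u=-(\id_V-P)+\beta Q+\gamma R$. This works precisely because Theorem~\ref{idemLCtheo} guarantees that one of the three coefficients can be taken equal to $\alpha$, so the coefficient of $P$ is~$1$ and the complementary-idempotent trick applies. Your route is fully self-contained within the present paper, at the small cost of an extra line; the paper's route is shorter but imports an external result.
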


Now, we prove Theorem \ref{idemLCtheo}. As we will see, it is a mostly straightforward adaptation of the line of reasoning from
\cite{dSPidempotentLC}.

Our first lemma is obtained by following the proof of Lemma 15 of \cite{dSPidempotentLC}:

\begin{lemma}\label{compositelemma}
Let $(\alpha,\beta,\gamma)\in \F^3$ be such that $\alpha \neq \beta$ and $\alpha \neq 0$, and let
$(q,r,s)\in \N^3$ be with $s>0$.
Then, there exist a monic polynomial $p(t) \in \F[t]$ of degree $s$ such that $\tr p(t) \neq \gamma$, together with a scalar $\alpha'$
such that $C\bigl(p(t)\bigr)\oplus \alpha\,I_q\oplus \beta\,I_r$ is an $(\alpha,\alpha')$-composite.
\end{lemma}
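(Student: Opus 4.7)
I would adapt the proof of Lemma 15 in \cite{dSPidempotentLC}. Setting $\alpha':=\beta-\alpha$ (which is nonzero by the hypothesis $\alpha\neq\beta$) makes $\alpha+\alpha'=\beta$, so the tail blocks admit an immediate block-diagonal $(\alpha,\alpha')$-decomposition via the idempotents $I_q\oplus I_r$ and $0_q\oplus I_r$:
$$\alpha(I_q\oplus I_r)+\alpha'(0_q\oplus I_r)=\alpha I_q\oplus\beta I_r.$$

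For the $s$-block, I would construct idempotents $E_1,E_2\in\Mat_s(\F)$ whose $(\alpha,\alpha')$-combination $\alpha E_1+\alpha'E_2$ is similar to $C(p(t))$ for a monic polynomial $p(t)$ of degree $s$ that we still have freedom to adjust. Concretely, for prescribed ranks $k_1,k_2$ one picks $E_1,E_2$ of the form ``diagonal idempotent plus rank-one off-diagonal correction'' so that $\alpha E_1+\alpha'E_2$ has the form of a good cyclic matrix (i.e.\ ones on the subdiagonal and free entries elsewhere), analogous to the constructions carried out in the proofs of Lemmas \ref{wellpartitioned-squarezero} and \ref{wellpartitioned-idempotent}. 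Lemma \ref{cyclicfit} then identifies the similarity type with $C(p(t))$ for $p(t)$ of any monic degree-$s$ shape of trace $\alpha k_1+\alpha'k_2$. Since $\alpha$ and $\alpha'$ are both nonzero and the pairs $(k_1,k_2)$ range widely in $\lcro 0,s\rcro^2$, at least two distinct admissible values are available for $\tr p(t)$, so $\tr p(t)\neq\gamma$ can be arranged.

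Finally, taking $P_1:=E_1\oplus(I_q\oplus I_r)$ and $P_2:=E_2\oplus(0_q\oplus I_r)$, both idempotent by construction, one obtains $\alpha P_1+\alpha'P_2\simeq C(p(t))\oplus\alpha I_q\oplus\beta I_r$, witnessing that the latter matrix is an $(\alpha,\alpha')$-composite (by Remark \ref{remarksim}).

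The main obstacle is the explicit construction of $E_1,E_2$ in $\Mat_s(\F)$: verifying idempotency (a quadratic condition in their entries) while simultaneously arranging that $\alpha E_1+\alpha'E_2$ has the subdiagonal of ones and enough free diagonal/upper-triangular entries to realize an essentially arbitrary characteristic polynomial of the prescribed trace. This is the technical heart of the lemma, but it is a matter of concrete matrix computation that closely parallels the constructions already carried out in \cite{dSPidempotentLC} and in the preceding sections of the present paper.
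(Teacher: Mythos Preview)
Your proposal takes essentially the same approach as the paper, which offers no independent argument but simply states that the lemma ``is obtained by following the proof of Lemma 15 of \cite{dSPidempotentLC}''; you likewise defer to that source, and your choice $\alpha':=\beta-\alpha$ together with the block-diagonal handling of $\alpha I_q\oplus\beta I_r$ is exactly the natural reduction. One minor caution: the analogies you draw to Lemmas \ref{wellpartitioned-squarezero}, \ref{wellpartitioned-idempotent} and \ref{cyclicfit} are a bit loose, since those results concern subtracting a single idempotent or square-zero matrix from a pre-existing well-partitioned block and then invoking the two-block gluing lemma, whereas here one must build \emph{two} idempotents in $\Mat_s(\F)$ from scratch so that their $(\alpha,\alpha')$-combination is cyclic; the actual construction in \cite{dSPidempotentLC} is done directly rather than via those lemmas.
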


Similarly, the next lemma is obtained by following the details of the proof\footnote{Note that the statement of Lemma 13 of \cite{dSPidempotentLC} is partly incorrect because it does not take into account the possibility
that the matrix $A$ be diagonalizable with exactly two eigenvalues.
Nevertheless, this has no impact on the validity of the rest of the results from \cite{dSPidempotentLC}.} of Lemma 13 of \cite{dSPidempotentLC}
(indeed, in the notation of that proof we have $\alpha=\alpha_1$ for $A:=M \oplus \alpha\,I_n$):

\begin{lemma}\label{idemLCwellpartlemma}
Let $M \in \Mat_n(\F)$ and $\alpha \in \F \setminus \{0\}$.
If $M$ is not triangularizable with sole eigenvalue $\alpha$,
then there exist a triple $(p,q,r)$ of non-negative integers such that $p+q+r=2n$,
a scalar $\beta \neq \alpha$ and
a matrix $N \in \Mat_p(\K)$ such that $M \oplus \alpha\,I_n \simeq N \oplus \alpha\,I_q \oplus \beta\,I_r$
and either $N=0$ or $N$ is very well-partitioned.
\end{lemma}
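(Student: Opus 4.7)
The plan is to adapt the proof of Lemma~13 of \cite{dSPidempotentLC} to the matrix $A:=M\oplus\alpha I_n$, in line with the parenthetical remark in the statement. I would start from the primary decomposition of $M$ over $\F$: write the $(t-\alpha)$-primary part as $\bigoplus_{i=1}^{k}C\bigl((t-\alpha)^{e_i}\bigr)$ with $e_1\geq\cdots\geq e_k\geq 1$, let $l$ denote the number of $e_i$'s that are at least $2$, and collect the remaining primary components of $M$ into a matrix $B$ whose eigenvalues are all different from $\alpha$. The hypothesis says precisely that $\chi_M\neq(t-\alpha)^n$, hence $B$ is non-void; adding $\alpha I_n$ yields the similarity $M\oplus\alpha I_n\simeq\bigoplus_{i=1}^{l}C\bigl((t-\alpha)^{e_i}\bigr)\oplus\alpha I_{k-l+n}\oplus B$.

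The idea now is to use the blocks $C\bigl((t-\alpha)^{e_i}\bigr)$ with $e_i\geq 2$ as the $p_i$'s of $N$, and the direct summands of $B$ as (part of) the $q_j$'s. Size-$1$ $\alpha$-Jordan cells coming from $\alpha I_{k-l+n}$ are absorbed into the $\alpha I_q$ summand, except at most one of them which may be kept inside $N$ as $p_1=t-\alpha$ (a reserve needed when $l=0$). For the size-$1$ Jordan cells of $M$ at eigenvalues in $\F\setminus\{\alpha\}$, I would pick $\beta$ to be such an eigenvalue maximizing the number $n_\beta$ of size-$1$ $\beta$-cells (if any exist), peel off a suitable chunk $\beta I_r$, and pair off the remaining size-$1$ non-$\alpha$ cells across distinct eigenvalues $\mu_1\neq\mu_2$ into degree-$2$ companion blocks $C\bigl((t-\mu_1)(t-\mu_2)\bigr)$, each of which becomes one of the $q_j$'s. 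The degenerate case in which this repackaging leaves $N$ empty corresponds to $M$ being of the form $\alpha I_a\oplus\beta I_b$, and the alternative $N=0$ covers it.

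The main obstacle is verifying the very-well-partitioned condition, namely that at most one of $p_1$ and $q_s$ has degree~$1$. Trouble only arises when $l=0$ (forcing $p_1=t-\alpha$ of degree~$1$) together with an odd total number of size-$1$ non-$\alpha$ Jordan cells (which would force a leftover degree-$1$ $q_s$). This is handled by a short combinatorial argument: take $r=\max\bigl(0,\,2n_\beta-\sum_\mu n_\mu\bigr)$ in general, where $\mu$ ranges over the eigenvalues of $M$ in $\F\setminus\{\alpha\}$, and bump $r$ up by $1$ in the problematic subcase so that the remaining size-$1$ non-$\alpha$ Jordan cells have even count and admit a perfect matching across distinct eigenvalues, making every $q_j$ of degree $\geq 2$.
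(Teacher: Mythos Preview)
Your plan is exactly what the paper does: it gives no separate argument but defers to the proof of Lemma~13 of \cite{dSPidempotentLC}, observing that for $A:=M\oplus\alpha I_n$ the distinguished eigenvalue $\alpha_1$ of that proof is $\alpha$. Your outline reproduces that route faithfully, including the pairing trick for size-$1$ non-$\alpha$ cells and the fallback $N=0$ when $M\simeq\alpha I_a\oplus\beta I_b$.

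One edge case escapes your formula for $r$, however. Suppose $l>0$ (so the $(t-\alpha)$-primary part contributes at least one block of size $\geq 2$) while the complementary part $B$ has no blocks of size $\geq 2$ and all its size-$1$ cells share a single eigenvalue, i.e.\ $B\simeq\beta I_m$. Then $S=n_\beta=m$ and your rule gives $r=\max(0,2m-m)=m$, so the whole of $B$ is absorbed into $\beta I_r$; nothing remains to serve as any $q_j$, yet the definition of a well-partitioned matrix demands $s\geq 1$. Your degenerate-case clause does not catch this, since $N$ is not empty here (it still contains the $p_i$'s). The repair is immediate: take $r=m-1$ and keep one copy of $C(t-\beta)$ as $q_s$; since $\deg p_1\geq 2$ in this situation, the very-well-partitioned condition is preserved. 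With this adjustment (and the understanding that the ``bump $r$ by $1$'' applies only when the default $r$ is $0$, so that the remaining count actually becomes even), the argument goes through.
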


Now, we can finish the proof of Theorem \ref{idemLCtheo}:

\begin{proof}[Proof of Theorem \ref{idemLCtheo}]
Let $A \in \Mat_n(\F)$. Assume that $A$ is triangularizable with sole eigenvalue $\alpha$.
Hence, $M:=A \oplus \alpha\, I_n$ is also triangularizable with sole eigenvalue $\alpha$.
Then, $M-\alpha\,I_{2n}$ is nilpotent, and hence by Proposition 15 of \cite{dSPidem2} it is a $(1,-1)$-composite. 
Thus, $M$ is an $(\alpha,1,-1)$-composite. 

Assume now that $A$ is not triangularizable with sole eigenvalue $\alpha$.
Then, by Lemma \ref{idemLCwellpartlemma} there exist a scalar $\beta \neq \alpha$, a triple $(p,q,r)$ of non-negative integers, and
a matrix $N \in \Mat_p(\K)$ such that $A \oplus \alpha\,I_n \simeq A':=N \oplus \alpha\, I_q \oplus \beta \,I_r$
and either $N=0$ or $N$ is very well-partitioned.
If $N=0$ then $A \oplus \alpha\,I_n$ is an $(\alpha,\beta)$-composite, and hence an $(\alpha,\beta,1)$-composite.
Assume now that $N$ is very well-partitioned. By Lemma \ref{compositelemma}, there exist a monic
polynomial $u(t) \in \F[t]$ of degree $p$ and a scalar $\alpha'$
such that $\tr u(t) \neq \tr N$ and
$C\bigl(u(t)\bigr)\oplus \alpha\,I_q\oplus \beta\,I_r$ is an $(\alpha,\alpha')$-composite.
By Lemma \ref{verywellpartitioned-idempotent}, there exist a scalar $\delta$ and an idempotent $P \in \Mat_p(\F)$ such that
$N-\delta P \simeq C\bigl(u(t)\bigr)$.
Set $\widetilde{P}:=P \oplus 0_{q+r}$, which is idempotent. Then,
$$A'-\delta \widetilde{P} \simeq C\bigl(u(t)\bigr) \oplus \alpha\, I_q \oplus \beta\, I_r$$
is an $(\alpha,\beta)$-composite, and hence $A'$ is an $(\alpha,\beta,\delta)$-composite.
Therefore, $A \oplus \alpha\,I_n$ is an $(\alpha,\beta,\delta)$-composite. This completes the proof.
\end{proof}

\section{Application to the decomposition of endomorphisms of an infinite-dimensional space}\label{infinitedim}

In this section, we prove Corollaries \ref{3squarezeroCor}, \ref{3squarezeroCorcar2}, \ref{3idemCorcar2} and \ref{idemLCCor}.

\subsection{Proof of Corollary \ref{3squarezeroCor}}

We can choose a finite-dimensional linear subspace $W$ of $V$ such that $\im u \subset W$ and $\Ker u+W=V$.
Denote by $u'$ the endomorphism of $W$ induced by $u$, and set $n:=\dim W$.
Since $V$ is infinite-dimensional we can split $V=W \oplus W' \oplus W_0$,
where $\dim W'=n$ and $W' \oplus W_0 \subset \Ker u$.
Denote by $u''$ the endomorphism of $W\oplus W'$ induced by $u$.
Choose some square matrix $A$ that represents $u'$. Then, $\tr(A)=\tr(u)=0$ since $\im u \subset W$.
By Theorem \ref{allfieldssquarezero}, $A \oplus 0_n$ is the sum of three square-zero matrices, yielding
square-zero endomorphisms $a,b,c$ of $W \oplus W'$ such that $u''=a+b+c$.
Extending $a,b,c$ into endomorphisms $\widetilde{a},\widetilde{b},\widetilde{c}$ of $V$ which vanish everywhere on $W_0$,
we obtain that $\widetilde{a},\widetilde{b},\widetilde{c}$ have square zero and
$u=\widetilde{a}+\widetilde{b}+\widetilde{c}$.

\subsection{Proof of Corollary \ref{3squarezeroCorcar2}}

Set $f:=\alpha\,\id_V+u$.
We can choose a finite-dimensional linear subspace $W$ of $V$ such that $\im u \subset W$ and $\Ker u+W=V$.
Set $n:=\dim W$. Set $p:=0$ if $\tr u=n\alpha$, and $p:=1$ if $\tr u=(n+1)\alpha$.
Denote by $u'$ the endomorphism of $W$ induced by $u$.
Since $V$ is infinite-dimensional we can split $V=W \oplus W' \oplus W_0$,
where $\dim W'=n+p$ and $W' \oplus W_0 \subset \Ker u$.
Choose some square matrix $A$ that represents the endomorphism $u''$ of $W \oplus W'$ induced by $u$.
Then, $\tr A=\tr(u'')=\tr (u)$ since $\im u \subset W$.
Hence, $\tr(A+\alpha I_{n+p})=\tr(u)+(n+p)\alpha=0$, and $A+\alpha I_{n+p}$ represents $\alpha\,\id_{W \oplus W'}+u''$.
By Theorem \ref{carac2squarezero}, we obtain square-zero endomorphisms $a,b,c$ of $W \oplus W'$ such that
$f_{|W\oplus W'}=a+b+c$.
Next, as $W_0$ is infinite-dimensional, we can split it into $W_0=\underset{i \in I}{\bigoplus} P_i$
in which $P_i$ has dimension $2$ for all $i \in I$.
Let $i \in I$. Then, $f_{|P_i}=\alpha \,\id_{P_i}$ for all $i \in I$, which has trace $0$. By Theorem \ref{carac2squarezero},
there are square-zero endomorphisms $a_i,b_i,c_i$ of $P_i$ such that $f_{|P_i}=a_i+b_i+c_i$.
Define $\widetilde{a}$ as the endomorphism of $V$ that coincides with $a$ on $W\oplus W'$, and with $a_i$ on $P_i$ for all $i \in I$.
Likewise, define $\widetilde{b}$ and $\widetilde{c}$ from the data of $b,(b_i)_{i\in I}$ and $c,(c_i)_{i \in I}$, respectively.
Then, one checks that $\widetilde{a},\widetilde{b},\widetilde{c}$ have square zero and $f=\widetilde{a}+\widetilde{b}+\widetilde{c}$.

\subsection{Proof of Corollary \ref{3idemCorcar2}}

Set $f:=\alpha\,\id_V+u$.
We can choose a finite-dimensional linear subspace $W$ of $V$ such that $\im u \subset W$ and $\Ker u+W=V$.
Denote by $f'$ the endomorphism of $W$ induced by $f$, and set $n:=\dim W$.
Since $V$ is infinite-dimensional we can split $V=W \oplus W' \oplus W_0$,
where $\dim W'=n$ and $W' \oplus W_0 \subset \Ker u$.
Choose some square matrix $A$ that represents $f'$. Then, $\tr(A)=\tr(u)+n\alpha \in \{0_\F,1_\F\}$ since $\im u \subset W$.
The matrix $A \oplus \alpha\,I_{n}$ represents $f_{|W \oplus W'}$, and its trace belongs to $\{0_\F,1_\F\}$.
Hence, by Theorem \ref{allfieldssquarezero}, the endomorphism $f_{|W\oplus W'}$ is the sum of three idempotent
endomorphisms $p,q,r$ of $W \oplus W'$.

Let us extend $p,q$ into endomorphisms $\widetilde{p},\widetilde{q}$ of $V$ which vanish everywhere on $W_0$.
Let us extend $r$ into an endomorphism $\widetilde{r}$ of $V$ whose restriction to $W_0$ is $\alpha \id_{W_0}$.
Then, $\widetilde{p},\widetilde{q},\widetilde{r}$ are idempotent endomorphisms of $V$ and
$f=\widetilde{p}+\widetilde{q}+\widetilde{r}$.

\subsection{Proof of Corollary \ref{idemLCCor}}

If $\alpha=0$ then the result is a straightforward consequence of Theorem 1 of \cite{dSPidempotentLC}.
Assume now that $\alpha \neq 0$.

We can find a finite-dimensional linear subspace $W$ of $V$ such that $\im u \subset W$ and $W+\Ker u=V$.
Set $n:=\dim W$.
Hence, $W$ is stable under $f$ and we can denote by $f'$ the endomorphism of $W$ induced by $f$.
We split $V=W \oplus W' \oplus W_0$, where $W'\oplus W_0 \subset \Ker u$ and $\dim W=\dim W'$.
Denote respectively by $f_1$ and $f_0$ the endomorphisms of $W+W'$ and $W_0$ induced by $f$, and note that $f_0=\alpha \id_{W_0.}$
Choose a square matrix $A \in \Mat_n(\F)$ that represents $f'$. Then, $A \oplus \alpha\,I_n$ represents $f_1$.
By Theorem \ref{idemLCtheo}, there are scalars $\beta,\gamma$ and idempotent matrices $P,Q,R$ of $\Mat_{2n}(\F)$ such that $A\oplus \alpha I_n =\alpha P+\beta Q+\gamma R$.
This yields idempotent endomorphisms $p,q,r$ of $W+W'$ such that $f_1=\alpha\,p+\beta\,q+\gamma\,r$.
Hence, $f=\alpha\,(p \oplus \id_{W_0})+\beta\,(q\oplus 0_{W_0})+\gamma\,(r\oplus 0_{W_0})$.
Therefore, $f$ is a linear combination of three idempotent endomorphisms of $V$.

\appendix

\section{Appendix. On sums of two square-zero matrices}

This appendix consists in a short proof of Botha's theorem (that is, Theorem \ref{sumoftwotheo}).
First of all, the sufficient conditions:

\begin{lemma}\label{nilpotentlemma}
Let $n$ be a positive integer. Then, $C(t^n)$ is the sum of two square-zero matrices.
\end{lemma}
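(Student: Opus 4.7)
The plan is to give a direct, explicit decomposition by thinking of $C(t^n)$ as the nilpotent shift map. Let $e_1,\dots,e_n$ denote the canonical basis of $\F^n$. Reading off the definition of $C(t^n)$, the corresponding endomorphism acts by $e_i \mapsto e_{i+1}$ for $i<n$ and $e_n \mapsto 0$. My goal is to split this shift into two halves, each of which has square zero.

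To that end, I would introduce two endomorphisms $a$ and $b$ of $\F^n$ defined on the basis by
\begin{align*}
a(e_{2i-1}) &= e_{2i}, \quad a(e_{2i}) = 0, \\
b(e_{2i})   &= e_{2i+1}, \quad b(e_{2i-1}) = 0,
\end{align*}
with the conventions that $e_{n+1}:=0$ and that any $e_j$ with $j>n$ is discarded. Then $a^2=0$ follows at once because $a$ sends every basis vector either to $0$ or to an even-indexed $e_{2i}$, which is killed by $a$; the same reasoning shows $b^2=0$. Finally, $a+b$ has exactly the shift action $e_i \mapsto e_{i+1}$, $e_n \mapsto 0$. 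Letting $A$ and $B$ be the matrices of $a$ and $b$ in the canonical basis, we get $A^2=B^2=0$ and $A+B=C(t^n)$.

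In matrix terms, this amounts to writing $C(t^n)$ as the sum of the matrix $A$ whose only nonzero entries are $1$'s at positions $(2i,2i-1)$, and the matrix $B$ whose only nonzero entries are $1$'s at positions $(2i+1,2i)$; a one-line index calculation checks that $A^2$ and $B^2$ both vanish because consecutive nonzero entries in $A$ (resp.\ $B$) never line up. There is no real obstacle here: the lemma is purely combinatorial and the decomposition is standard; the only thing to be careful about is the parity bookkeeping near the last basis vector, which is handled uniformly by the convention $e_{n+1}=0$.
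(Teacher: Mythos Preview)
Your proposal is correct and is essentially identical to the paper's proof: the paper defines $A$ and $B$ directly by putting $a_{i,j}=1$ exactly when $i=j+1$ with $i$ even, and $b_{i,j}=1$ exactly when $i=j+1$ with $i$ odd, which is precisely your decomposition written in matrix form rather than endomorphism form.
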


\begin{proof}
Define $A=(a_{i,j})$ and $B=(b_{i,j})$ in $\Mat_n(\F)$ by
$a_{i,j}=1$ if $i=j+1$ and $i$ is even, and $a_{i,j}=0$ otherwise,
and $b_{i,j}=1$ if $i=j+1$ and $i$ is odd, and $b_{i,j}=0$ otherwise.
One checks that $A^2=B^2=0$, while $A+B=C(t^n)$.
\end{proof}

\begin{lemma}\label{companionsquare}
Let $p$ be a non-constant monic polynomial.
Then,
$$C\bigl(p(t^2)\bigr) \simeq \begin{bmatrix}
0_n & C\bigl(p(t)\bigr) \\
I_n & 0_n
\end{bmatrix}.$$
\end{lemma}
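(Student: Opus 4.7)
Set $M := \begin{bmatrix} 0_n & C(p(t)) \\ I_n & 0_n \end{bmatrix}$, an element of $\Mat_{2n}(\F)$, where $n := \deg p$. The plan is to show that $M$ is cyclic and that $p(t^2)$ annihilates it; since $\deg p(t^2) = 2n$, this forces its minimal polynomial to equal $p(t^2)$, and the conclusion follows from the fact that two cyclic matrices are similar if and only if they share the same minimal polynomial.

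The first step, which accounts for the ``squaring'' pattern in the statement, is the straightforward block computation
$$M^2 = \begin{bmatrix} 0_n & C(p(t)) \\ I_n & 0_n \end{bmatrix}^2 = \begin{bmatrix} C(p(t)) & 0_n \\ 0_n & C(p(t)) \end{bmatrix} = C(p(t)) \oplus C(p(t)).$$
Since $p$ annihilates $C(p(t))$ by construction, $p(M^2) = 0$; writing this in terms of $M$ directly, we see that the polynomial $p(t^2) \in \F[t]$ annihilates $M$. Hence the minimal polynomial of $M$ divides $p(t^2)$.

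The second step is to exhibit a cyclic vector. Let $(f_1,\dots,f_n,g_1,\dots,g_n)$ denote the canonical basis of $\F^{2n}$, with $f_i$'s indexing the top block and $g_i$'s the bottom one. The block action of $M$ gives $M f_i = g_i$ for every $i$, and $M g_i = C(p) e_i$ viewed as a top-block vector, which equals $f_{i+1}$ for $i < n$. Iterating starting from $f_1$, we find
$$f_1,\; M f_1 = g_1,\; M^2 f_1 = f_2,\; M^3 f_1 = g_2,\; \dots,\; M^{2n-2} f_1 = f_n,\; M^{2n-1} f_1 = g_n,$$
which is the whole canonical basis. Thus $f_1$ is a cyclic vector for $M$, so the minimal polynomial of $M$ coincides with its characteristic polynomial and in particular has degree $2n$.

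Combining the two steps, the minimal polynomial of $M$ is a divisor of $p(t^2)$ having degree $2n = \deg p(t^2)$, hence equals $p(t^2)$ (both are monic). As $M$ is cyclic with minimal polynomial $p(t^2)$, we conclude $M \simeq C\bigl(p(t^2)\bigr)$. There is no real obstacle here: the only slightly delicate point is verifying that $f_1$ (rather than, say, $g_1$, which may fail to be cyclic if $p(0)=0$) generates the whole space under iteration by $M$.
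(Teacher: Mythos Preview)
Your proof is correct and is essentially the same as the paper's: both arguments amount to observing that the iterates $f_1, Mf_1, \dots, M^{2n-1}f_1$ coincide with the interleaved basis $(e_1,e_{n+1},e_2,e_{n+2},\dots,e_n,e_{2n})$, in which the matrix of $M$ is $C\bigl(p(t^2)\bigr)$. The only cosmetic difference is that the paper simply asserts the change-of-basis computation, whereas you split it into exhibiting the cyclic vector and separately identifying the minimal polynomial via the relation $M^2=C(p)\oplus C(p)$.
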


\begin{proof}
Indeed, denote by $(e_1,\dots,e_{2n})$ the standard basis of $\F^{2n}$ and by
$u$ the endomorphism of $\F^{2n}$ associated with $\begin{bmatrix}
0_n & C\bigl(p(t)\bigr) \\
I_n & 0_n
\end{bmatrix}$ in it.
One checks that the matrix of $u$ in the basis $(e_1,e_{n+1},e_2,e_{n+2},\dots,e_n,e_{2n})$
is $C\bigl(p(t^2)\bigr)$.
\end{proof}

\begin{cor}\label{elemfactcor}
For all $M \in \Mat_n(\F)$, the invariant factors of
$$D(M):=\begin{bmatrix}
0_n & M \\
I_n & 0_n
\end{bmatrix}$$
are even polynomials.
\end{cor}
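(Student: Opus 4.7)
My plan is to reduce the problem to a single companion block via the rational canonical form of $M$, and then invoke Lemma \ref{companionsquare} to identify the resulting invariant factors explicitly.

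First I would observe that for any invertible $P \in \Mat_n(\F)$, conjugating $D(M)$ by the block-diagonal matrix $\begin{bmatrix} P & 0 \\ 0 & P \end{bmatrix}$ yields $D(P^{-1}MP)$, so $D(M) \simeq D(P^{-1}MP)$. Thus I may replace $M$ by any matrix that is similar to it. In particular, I can assume $M$ is in rational canonical form, say
$$M = C(p_1) \oplus \cdots \oplus C(p_k),$$
where $p_1,\dots,p_k$ are the invariant factors of $M$, ordered so that $p_{i+1}(t) \mid p_i(t)$ for all $i$.

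Next, I would show that $D(M)$ is block-diagonalizable accordingly: there is a permutation of the standard basis of $\F^{2n}$ that transforms $D\bigl(C(p_1) \oplus \cdots \oplus C(p_k)\bigr)$ into $D(C(p_1)) \oplus \cdots \oplus D(C(p_k))$. Concretely, if one writes the first $n$ basis vectors as a concatenation of blocks of sizes $n_1,\dots,n_k$ (where $n_i := \deg p_i$) and similarly for the last $n$, then grouping the $i$-th block of the top half with the $i$-th block of the bottom half produces the desired block-diagonal form. This is just a bookkeeping verification on how $D$ acts, using the fact that the identity block in $D(M)$ sends the $j$-th vector of the first half to the $j$-th vector of the second half.

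Finally, by Lemma \ref{companionsquare}, each summand satisfies $D(C(p_i)) \simeq C\bigl(p_i(t^2)\bigr)$, so
$$D(M) \simeq C\bigl(p_1(t^2)\bigr) \oplus \cdots \oplus C\bigl(p_k(t^2)\bigr).$$
Since $p_{i+1}(t) \mid p_i(t)$ obviously implies $p_{i+1}(t^2) \mid p_i(t^2)$, the sequence $p_1(t^2),\dots,p_k(t^2)$ is a valid invariant-factor sequence for the matrix on the right, and by uniqueness these are the invariant factors of $D(M)$. Each $p_i(t^2)$ is even by construction, so every invariant factor of $D(M)$ is an even polynomial. The main (minor) technical point is the permutation argument in the second step; everything else is direct application of prior results.
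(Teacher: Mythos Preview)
Your proof is correct and follows essentially the same route as the paper: reduce by conjugation to the rational canonical form of $M$, split $D$ over the direct sum via a permutation of coordinates, apply Lemma~\ref{companionsquare} to each block, and read off the invariant factors $p_i(t^2)$. The only slip is the direction of the conjugation (one actually gets $D(PMP^{-1})$ rather than $D(P^{-1}MP)$), but this is immaterial to the conclusion that $D(M)\simeq D(N)$ whenever $M\simeq N$.
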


\begin{proof}
Note first that if $M=A_1 \oplus \dots \oplus A_p$ for some square matrices $A_1,\dots,A_p$,
then $D(M) \simeq D(A_1) \oplus \cdots \oplus D(A_p)$.
Moreover, for every invertible matrix $P \in \GL_n(\F)$,
we see that
$$D(PMP^{-1})=Q D(M) Q^{-1}$$
where $Q=P \oplus P \in \GL_{2n}(\F)$.
Using this last remark, we see that no generality is lost in assuming that
$M=C(p_1) \oplus \cdots \oplus C(p_r)$, where $p_1,\dots,p_r$ are monic polynomials,
and $p_{i+1}$ divides $p_i$ for all $i$ from $1$ to $r-1$.
Then,
$$D(M) \simeq D\bigl(C(p_1)\bigr) \oplus \cdots \oplus D\bigl(C(p_r)\bigr).$$
and by Lemma \ref{companionsquare}, this entails
$$D(M) \simeq D\bigl(C(p_1(t^2))\bigr)\oplus \cdots \oplus D\bigl(C(p_r(t^2))\bigr).$$
Obviously, $p_{i+1}(t^2)$ divides $p_i(t^2)$ for all $i$ from $1$ to $r-1$, whence the monic polynomials
$p_1(t^2),\dots,p_r(t^2)$ are the invariant factors of $D(M)$.
\end{proof}

\begin{cor}\label{evencor}
For every even monic polynomial $p(t)$, the matrix $C\bigl(p(t)\bigr)$ is the sum of two square-zero matrices.
\end{cor}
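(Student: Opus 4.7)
The plan is to combine Lemma \ref{companionsquare} with a trivial block decomposition. An even monic polynomial means $p(t)=q(t^2)$ for some monic polynomial $q$ of degree $n$ (where $\deg p=2n$). By Lemma \ref{companionsquare},
\[
C\bigl(p(t)\bigr) \simeq \begin{bmatrix} 0_n & C\bigl(q(t)\bigr) \\ I_n & 0_n \end{bmatrix}.
\]

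Next I would observe that this block-antidiagonal matrix splits as
\[
\begin{bmatrix} 0_n & C\bigl(q(t)\bigr) \\ I_n & 0_n \end{bmatrix}
= \begin{bmatrix} 0_n & C\bigl(q(t)\bigr) \\ 0_n & 0_n \end{bmatrix}
+ \begin{bmatrix} 0_n & 0_n \\ I_n & 0_n \end{bmatrix},
\]
and each of the two summands has square zero, being strictly block-triangular with a single off-diagonal block. Hence the matrix on the right-hand side of the similarity is a sum of two square-zero matrices.

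Finally, I would invoke Remark \ref{remarksim}: similarity preserves the property of being a $(t^2,t^2)$-sum, since conjugating a square-zero matrix by an invertible matrix yields another square-zero matrix. Therefore $C(p(t))$ itself is the sum of two square-zero matrices. There is no genuine obstacle here; the only thing to verify is the interpretation of \textquotedblleft even polynomial\textquotedblright\ and the straightforward application of Lemma \ref{companionsquare}, with the block decomposition doing all the work.
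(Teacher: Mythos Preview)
Your proof is correct and is essentially identical to the paper's own argument: the paper also applies Lemma~\ref{companionsquare} and then splits the block-antidiagonal matrix into the two strictly block-triangular pieces $\begin{bmatrix} 0_n & 0_n \\ I_n & 0_n \end{bmatrix}$ and $\begin{bmatrix} 0_n & C(q(t)) \\ 0_n & 0_n \end{bmatrix}$. If anything, your write-up is slightly cleaner in explicitly naming the inner polynomial $q$ with $p(t)=q(t^2)$, whereas the paper reuses the letter $p$ there.
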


\begin{proof}
Indeed, $$\begin{bmatrix}
0_n & C\bigl(p(t)\bigr) \\
I_n & 0_n
\end{bmatrix}=\begin{bmatrix}
0_n & 0_n \\
I_n & 0_n
\end{bmatrix}+\begin{bmatrix}
0_n & C\bigl(p(t)\bigr) \\
0_n & 0_n
\end{bmatrix}$$
is the sum of two square-zero matrices, and hence the result follows directly from Lemma \ref{companionsquare}.
\end{proof}

\begin{cor}\label{sufficientcondcor}
For every even or odd monic polynomial $p(t)$, the matrix $C(p(t))$ is the sum of two square-zero matrices.
\end{cor}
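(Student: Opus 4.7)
The plan is to separate the statement into two cases by hypothesis. The even case is exactly Corollary \ref{evencor} and requires no further work. For the odd case, I aim to reduce to Corollary \ref{evencor} together with Lemma \ref{nilpotentlemma} by means of a coprime factorization of $p(t)$.

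Let $p(t)$ be a monic odd polynomial, so that only odd powers of $t$ occur in $p$. Then $p(t) = t\,h(t^2)$ for some monic $h\in \F[t]$, and I may further write $h(s) = s^{k}\,r(s)$ where $k \geq 0$ is maximal and $r$ is monic with $r(0) \neq 0$, giving the factorization
$$p(t) = t^{2k+1}\,r(t^2).$$
The two factors $t^{2k+1}$ and $r(t^2)$ are coprime in $\F[t]$, because $r(t^2)$ evaluated at $t=0$ equals $r(0) \neq 0$. The standard similarity $C(fg) \simeq C(f)\oplus C(g)$ for coprime monic $f,g$ (a routine consequence of the second basic lemma of Section \ref{lemmasection}, applied to the block-triangular matrix $\begin{bmatrix} C(t^{2k+1}) & * \\ 0 & C(r(t^2)) \end{bmatrix}$ obtained from the rational canonical form) then yields
$$C(p(t))\simeq C(t^{2k+1})\oplus C\bigl(r(t^2)\bigr).$$

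Now $C(t^{2k+1})$ is the sum of two square-zero matrices by Lemma \ref{nilpotentlemma}, and $C\bigl(r(t^2)\bigr)$ is likewise such a sum by Corollary \ref{evencor} applied to the monic even polynomial $r(t^2)$. Using Remarks \ref{remarksim} and \ref{remarkoplus}, $C(p(t))$ is itself the sum of two square-zero matrices, which closes the odd case. No substantive obstacle is anticipated: the argument rests on the coprime split of an odd polynomial into a pure odd-power-of-$t$ part and an even part, after which the previously established nilpotent and even building blocks do all the work.
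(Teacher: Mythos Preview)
Your proof is correct and follows essentially the same route as the paper: split off the even case via Corollary~\ref{evencor}, and in the odd case factor $p(t)$ as a power of $t$ times an even polynomial with nonzero constant term, then use coprimality to split the companion matrix and invoke Lemma~\ref{nilpotentlemma} and Corollary~\ref{evencor}. The only cosmetic difference is that the paper writes the factorization directly as $p(t)=t^k q(t)$ with $q$ even and $q(0)\neq 0$, whereas you pass through the intermediate $p(t)=t\,h(t^2)$; the content is identical.
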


\begin{proof}
Let $p(t)$ be an odd monic polynomial. Then, $p(t)=t^k q(t)$ for some positive integer $k$ and some even polynomial $q(t)$ such that $q(0) \neq 0$.
Then, $t^k$ and $q(t)$ are coprime, whence $C\bigl(p(t)\bigr) \simeq C(t^k)\oplus C\bigl(q(t)\bigr)$. By Lemma
\ref{nilpotentlemma} and Corollary \ref{evencor}, we conclude that $C\bigl(p(t)\bigr)$ is the sum of two square-zero matrices.
\end{proof}

\begin{Rem}
This last corollary can also be obtained with a similar proof as for Lemma \ref{nilpotentlemma}
(see the proof of Lemma 2 of \cite{Bothasquarezero}).
Yet, since we shall use Lemma \ref{companionsquare} once more later on, it was more efficient to prove Corollary \ref{sufficientcondcor} as we have done.
\end{Rem}

By Remarks \ref{remarksim} and \ref{remarkoplus}, we conclude that a matrix is the sum of two square-zero matrices if each one of its invariant factors is odd or even.
Now, we turn to the necessary conditions.

\begin{lemma}\label{commutelemma}
Let $a$ and $b$ be square-zero endomorphisms of a vector space over $\F$. Then, $a$ and $b$ commute with $(a+b)^2$.
\end{lemma}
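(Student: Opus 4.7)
The plan is to reduce the statement to a direct computation by first simplifying $(a+b)^2$ using the hypotheses $a^2=0$ and $b^2=0$. Expanding, one obtains
$$(a+b)^2 = a^2 + ab + ba + b^2 = ab + ba,$$
so the claim reduces to showing that each of $a$ and $b$ commutes with $ab+ba$.

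Next, I would compute the two products involving $a$ side by side. Using $a^2=0$, one gets $a(ab+ba) = a^2 b + aba = aba$ and $(ab+ba)a = aba + ba^2 = aba$, which are equal. Symmetrically, using $b^2=0$, one obtains $b(ab+ba) = bab + b^2 a = bab$ and $(ab+ba)b = ab^2 + bab = bab$, again equal.

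There is no genuine obstacle here: everything follows from expanding, cancelling the $a^2$ and $b^2$ terms, and observing that the two surviving expressions coincide. The only thing to be mindful of is the non-commutativity of $a$ and $b$, which is precisely why the mixed terms $aba$ and $bab$ remain and match up on both sides without further manipulation.
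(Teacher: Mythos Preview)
Your proof is correct and follows exactly the same approach as the paper: expand $(a+b)^2=ab+ba$ using $a^2=b^2=0$, then verify directly that $a(ab+ba)=aba=(ab+ba)a$ and likewise for $b$. The paper's version is simply more terse.
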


\begin{proof}
Indeed, $(a+b)^2=ab+ba$, whence $a(a+b)^2=aba=(a+b)^2a$, and likewise $b$ commutes with $(a+b)^2$.
\end{proof}

\begin{cor}\label{invertiblepartcor}
Let $u$ be an endomorphism of a finite-dimensional vector space over $\F$.
Denote by $u_i$ the invertible part of $u$ in its Fitting decomposition.
If $u$ is the sum of two square-zero operators, then so is $u_i$.
\end{cor}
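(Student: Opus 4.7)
The plan is to use Lemma \ref{commutelemma} to show that both square-zero summands preserve the Fitting decomposition of $u$, and then restrict everything to the invertible part.

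Write $u = a+b$ with $a^2 = b^2 = 0$, and let $V = V_i \oplus V_n$ be the Fitting decomposition of $u$, so that $u$ restricts to the invertible endomorphism $u_i$ on $V_i$ and to a nilpotent endomorphism on $V_n$. First, I would observe that the Fitting decomposition for $u^2$ coincides with the Fitting decomposition for $u$. Indeed, for $N$ large enough one has $V_i = \im(u^N) = \im(u^{2N})$ and $V_n = \Ker(u^N) = \Ker(u^{2N})$ (stability of the Fitting filtration), and $V_i$, $V_n$ are exactly the invertible and nilpotent parts of $u^2$ as well.

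Next, by Lemma \ref{commutelemma}, both $a$ and $b$ commute with $u^2 = (a+b)^2$. Any endomorphism commuting with $u^2$ preserves $\im(u^{2N})$ and $\Ker(u^{2N})$, hence preserves $V_i$ and $V_n$. So we may consider the restrictions $a_i := a|_{V_i}$ and $b_i := b|_{V_i}$, which are endomorphisms of $V_i$.

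Finally, since $a^2 = 0$ and $b^2 = 0$ on the whole of $V$, we get $a_i^2 = 0$ and $b_i^2 = 0$, while $a_i + b_i = (a+b)|_{V_i} = u_i$. Thus $u_i$ is the sum of two square-zero endomorphisms of $V_i$. There is no real obstacle here; the one point deserving care is the identification of the Fitting subspaces of $u$ with those of $u^2$, which makes the commutation conclusion from Lemma \ref{commutelemma} directly applicable to the decomposition of $u$ itself.
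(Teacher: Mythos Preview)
Your proof is correct and follows essentially the same approach as the paper: both use Lemma \ref{commutelemma} to get that $a$ and $b$ commute with $u^2$, observe that the invertible Fitting subspace of $u$ equals $\im(u^{2N})$ for large $N$, and conclude that $a$ and $b$ stabilize $V_i$ so that their restrictions give the desired decomposition of $u_i$. The only difference is cosmetic: the paper fixes $N=n=\dim V$ at the outset, while you work with ``$N$ large enough'' and also note the preservation of $V_n$, which is not needed for the conclusion.
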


\begin{proof}
Denote by $n$ the dimension of the domain of $u$. Since $2n \geq n$, we know that $\im u^{2n}$
is the domain of $u_i$. Assume now that $u=a+b$ for some square-zero endomorphism $a$ and $b$ of the domain of $u$.
By Lemma \ref{commutelemma}, both $a$ and $b$ commute with $u^2$, whence both of them stabilize $\im (u^2)^n$.
Denoting by $a'$ and $b'$ the endomorphisms of $\im u^{2n}$ that are induced by $a$ and $b$, respectively,
we see that $(a')^2=(b')^2=0$ and $u_i=a'+b'$.
\end{proof}

\begin{lemma}\label{CNlemma}
Let $u$ be an automorphism of a finite-dimensional vector space $V$ over $\F$.
Assume that $u$ is the sum of two square-zero endomorphisms of $V$.
Then, the invariant factors of $u$ are even polynomials.
\end{lemma}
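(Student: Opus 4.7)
The plan is to exhibit a basis of $V$ in which the matrix of $u$ has the block form $D(M) = \begin{pmatrix} 0 & M \\ I & 0 \end{pmatrix}$ studied in Corollary \ref{elemfactcor}; the statement then follows immediately since the invariant factors of such a matrix are all even polynomials.

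The first and main step is to establish the decomposition $V = \Ker a \oplus \Ker b$, together with $\im a = \Ker a$ and $\im b = \Ker b$. I would observe that $u|_{\Ker a}$ coincides with $b|_{\Ker a}$ and is injective (because $u$ is an automorphism), with image contained in $\im b \subseteq \Ker b$; hence $\dim \Ker a \leq \dim \Ker b$, and by symmetry equality holds. Moreover, $\Ker a \cap \Ker b = \{0\}$ since $u = a+b$ is injective. Combining this with the inequality $\dim \Ker a \geq n/2$ coming from $\im a \subseteq \Ker a$ and the rank--nullity formula, a dimension count forces $n$ to be even, $\dim \Ker a = \dim \Ker b = n/2$, and $\Ker a \oplus \Ker b = V$, whence also $\im a = \Ker a$ and $\im b = \Ker b$.

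For the second step, I would pick any basis $(v_1,\dots,v_m)$ of $\Ker a$ with $m = n/2$ and use the isomorphism $u|_{\Ker a}:\Ker a \to \Ker b$ (coming from the first step) to get the basis $(u v_1,\dots,u v_m)$ of $\Ker b$. The identity $u^2 = ab + ba$ shows that $u^2$ preserves $\Ker a$ (since $ba$ vanishes on $\Ker a$ while $ab(\Ker a) \subseteq \im a = \Ker a$), so if $M$ denotes the matrix of $u^2|_{\Ker a}$ in $(v_1,\dots,v_m)$, then the matrix of $u$ in the concatenated basis $(v_1,\dots,v_m,u v_1,\dots,u v_m)$ of $V$ is readable off directly as $D(M)$. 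Corollary \ref{elemfactcor} then yields the desired conclusion.

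The hard part is the first step: it is the cooperation between $\Ker a$ and $\Ker b$ and the ensuing dimension count that unlock the rest. Everything afterward is bookkeeping, and the argument goes through uniformly in every characteristic of $\F$.
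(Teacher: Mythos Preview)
Your proof is correct and follows essentially the same approach as the paper: establish the decomposition $V=\Ker a\oplus\Ker b$ with $\im a=\Ker a$, $\im b=\Ker b$, then choose an adapted basis in which $u$ takes the form $D(M)$, and invoke Corollary~\ref{elemfactcor}. The only cosmetic difference is that the paper picks a basis of $\Ker b$ and pushes it by $a$ into $\Ker a$, writing out the matrices of $a$ and $b$ separately, whereas you pick a basis of $\Ker a$ and push it by $u$ into $\Ker b$, reading off the matrix of $u$ directly via $u^2|_{\Ker a}$; both routes land on the same block form.
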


\begin{proof}
Let $a$ and $b$ be square-zero endomorphisms of $V$ such that $u=a+b$.
Since $\im a \subset \Ker a$ and $\im b \subset \Ker b$, we have
$\dim \Ker a \geq \frac{\dim V}{2}$ and $\dim \Ker b \geq \frac{\dim V}{2}$.
On the other hand, since $a+b$ is injective we have $\Ker a \cap \Ker b=\{0\}$.
It follows that $n \geq \dim \Ker a +\dim \Ker b$. We deduce that
$\dim \Ker a=\dim \Ker b=\frac{\dim V}{2}$, that $\Ker a=\im a$, $\Ker b=\im b$, and $V=\Ker a \oplus \Ker b$.

Choose a basis $(e_1,\dots,e_n)$ of $\Ker b$. Then, $(a(e_1),\dots,a(e_n))$ is a basis of $\im a=\Ker a$, whence
$(e_1,\dots,e_n,a(e_1),\dots,a(e_n))$ is a basis of $V$. In that basis, the matrices that represent $a$ and $b$ are, respectively,
$$A=\begin{bmatrix}
0_n & 0_n \\
I_n & 0_n
\end{bmatrix} \quad \text{and} \quad B=\begin{bmatrix}
0_n & M \\
0_n & 0_n
\end{bmatrix} \quad \text{for some $M \in \Mat_n(\F)$.}$$
Hence,
$$A+B=\begin{bmatrix}
0_n & M \\
I_n & 0_n
\end{bmatrix}.$$
By Corollary \ref{elemfactcor}, all the invariant factors of $A+B$ are even polynomials,
which proves our result.
\end{proof}

We can finish the proof of Theorem \ref{sumoftwotheo}.
Let $u$ be an endomorphism of a finite-dimensional vector space which splits into the sum of two square-zero endomorphisms. Then,
each invariant factor of $u$ is the product of a power of $t$ (possible $t^0$) with either $1$ or an invariant factor of the invertible part $u_i$ in the Fitting decomposition.
By Corollary \ref{invertiblepartcor} and Lemma \ref{CNlemma}, each invariant factor of $u_i$
is an even polynomial, whence every invariant factor of $u$ is an even or an odd polynomial.

\end{document}